\newcommandx{\unsure}[2][1=]{\todo[linecolor=red,backgroundcolor=red!25,bordercolor=red,#1]{#2}}
\newcommandx{\info}[2][1=]{\todo[linecolor=OliveGreen,backgroundcolor=OliveGreen!25,bordercolor=OliveGreen,#1]{#2}}
\newtheorem{theorem}{Theorem}[section]
\newtheorem{lemma}[theorem]{Lemma}
\newtheorem{corollary}[theorem]{Corollary}
\newtheorem{conjecture}[theorem]{Conjecture}
\theoremstyle{remark}
\newtheorem{remark}[theorem]{Remark}
\DeclareMathOperator{\lw}{lw}
\DeclareMathOperator{\conv}{conv}
\DeclareMathOperator{\ci}{Cliff}
\DeclareMathOperator{\Div}{div}
\DeclareMathOperator{\GL}{GL}
\DeclareMathOperator{\im}{im}
\newcommand{\RR}{\mathbb{R}}
\newcommand{\ZZ}{\mathbb{Z}}
\newcommand{\PP}{\mathbb{P}}
\newcommand{\floor}[1]{\left\lfloor #1 \right\rfloor}
\newcommand{\ceil}[1]{\left\lceil #1 \right\rceil}
\begin{document}

\title{Canonical syzygies of smooth curves on toric surfaces}

\author{Wouter Castryck, Filip Cools, Jeroen Demeyer, Alexander Lemmens}

\begin{abstract} 
In a first part of this paper, we prove constancy of
the canonical graded Betti table among the smooth curves in linear systems on Gorenstein weak Fano toric surfaces.
In a second part, we show that Green's canonical syzygy conjecture holds for all smooth curves of genus at most $32$ or Clifford index at most $6$ on arbitrary toric surfaces. Conversely we
use known results on Green's conjecture (due to Lelli-Chiesa) to obtain new facts about graded Betti tables of projectively embedded toric surfaces.\\

\noindent \emph{Keywords:} algebraic curves, toric surfaces, syzygies
\end{abstract}

\maketitle

\section{Introduction}

Let $k$ be an algebraically closed field of characteristic zero, let $\Delta \subseteq \RR^2$ be a two-dimensional lattice polygon, and consider 
an irreducible Laurent polynomial 
\begin{equation} f = \sum_{(i,j) \in \Delta \cap \ZZ^2} c_{i,j} x^i y^j \in k[x^{\pm 1}, y^{\pm 1}] \label{eq_Laurentpoly} \end{equation}
that is supported on $\Delta$. Let $S_\Delta = k[ \, X_{i,j} \, | \, (i,j) \in \Delta \cap \ZZ^2 \,]$ be the polynomial ring obtained by associating a formal variable to each lattice point in $\Delta$. We
think of it as the homogeneous coordinate ring of the projective $(N_\Delta - 1)$-space, where $N_\Delta = | \Delta \cap \mathbb{Z}^2|$. Consider the map
$$ \varphi_\Delta : (k^*)^2 \hookrightarrow \PP^{N_\Delta - 1} : (\alpha,\beta) \mapsto (\alpha^i \beta^j)_{(i,j) \in \Delta \cap \ZZ^2}, $$
the Zariski closure of the image of which is a toric surface that we denote by $X_\Delta$.
Let $U_f$ be the curve in $(k^*)^2$ defined by $f=0$, and assume
that the closure $C_f$ of $\varphi_\Delta(U_f)$ inside $X_\Delta$ is a smooth hyperplane section, necessarily
cut out by
\[ \sum_{(i,j) \in \Delta \cap \ZZ^2} c_{i,j} X_{i,j} = 0. \]
This assumption is generically true, i.e., it holds for a dense open subset of the space of Laurent polynomials that are supported on $\Delta$. For instance, a well-known generically satisfied sufficient condition reads that
$f$ is $\Delta$-non-degenerate, in the sense of~\cite{batyrev}.

If $C_f$ is non-rational then every smooth and irreducible member in the complete linear system $|C_f|$ on $X_\Delta$ arises as $C_{f'}$ for some Laurent polynomial $f' \in k[x^{\pm 1}, y^{\pm 1}]$ that is supported on $\Delta$ and which is such that $C_{f'}$ is a smooth hyperplane section of $X_\Delta$. 
(If $C_f$ is rational then $|C_f|$ may contain torus-invariant prime divisors, which cannot be seen on $(k^\ast)^2$.) Furthermore, any such $C_{f'}$ is clearly lineary equivalent to $C_f$. In particular $|C_f|$ is parametrized by a dense open subset of the space $V_\Delta$ of Laurent polynomials that are supported on $\Delta$.


This holds generally: whenever one is given a complete linear system $|C|$ containing a smooth projective curve $C$ on a toric surface $X$,
say equipped with an embedding $\varphi: (k^*)^2 \hookrightarrow X$, then it arises in the above way.
Namely, let $P_C$ be the polygon associated with a torus-invariant divisor on $X$ that is linearly equivalent to $C$; 
see~\cite[\S4.3]{coxlittleschenck} for how this polygon is constructed.
Define $\Delta = \conv (P_C \cap \ZZ^2)$, where we note that if $C$ is Cartier then $P_C$ is a lattice polygon and one simply has $\Delta = P_C$.
If for $f$ one takes the generator of the ideal of $\varphi^{-1}(C)$ inside $k[x^{\pm 1}, y^{\pm 1}]$ that is supported on $\Delta$,
then the above assumptions are satisfied and one has $C_f \cong C$. For all other smooth projective curves $C' \in |C|$ one can similarly produce a Laurent polynomial $f' \in k[x^{\pm 1}, y^{\pm 1}]$ such that $C_{f'} \cong C'$
and such that $f'$ is also supported on $\Delta$. In particular, here too, the linear system $|C|$ is parametrized by a dense open subset of $V_\Delta$. We refer to~\cite[\S4]{linearpencils}
for more background on these claims.

\subsection*{Constancy of the gonality and the Clifford index}

Under our generic assumption that $C_f$ is a smooth hyperplane section, many of its
geometric invariants can be told explicitly from the combinatorics of $\Delta$. The starting result was proven by
Khovanskii~\cite{Khovanskii}, who obtained that the geometric genus $g(C_f)$ is given by $N_{\Delta^{(1)}} = | \Delta^{(1)} \cap \ZZ^2|$, where $\Delta^{(1)}$ denotes
the convex hull of the lattice points in the interior of $\Delta$ (in the cases where $\Delta^{(1)}$ is two-dimensional we will similarly write $\Delta^{(2)}$ to abbreviate $\Delta^{(1)(1)}$). To avoid low genus pathologies, from now on we will always assume that $| \Delta^{(1)} \cap \ZZ^2| \geq 4$. 
Then recent work of mainly Kawaguchi (a technical assumption was removed by the first two current authors) provides
a similar combinatorial interpretation for the Clifford index $\ci(C_f)$; see~\cite{CoppensMartens,ELMS} for some background on this invariant.

\begin{theorem}[see~\cite{linearpencils,kawaguchi}] \label{cliffindexthm} One has $\ci(C_f) = \lw(\Delta^{(1)})$
unless $\Delta^{(1)} \cong \Upsilon$, $\Delta^{(1)} \cong 2\Upsilon$ or $\Delta^{(1)} \cong (d-3)\Sigma$ for some $d\in\mathbb{Z}_{\geq 5}$, in which cases one has $\ci(C_f) = \lw(\Delta^{(1)}) - 1$.
\end{theorem}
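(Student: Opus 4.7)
The plan is to establish the theorem via two matching inequalities on $\ci(C_f)$, followed by a separate treatment of the three exceptional polygon families. Throughout, I would exploit the fact that the canonical model of $C_f$ embeds into $X_{\Delta^{(1)}}$ via the interior lattice points of $\Delta$, so that pencils on $C_f$ can be related to divisor classes on this ambient toric surface.

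For the upper bound $\ci(C_f) \leq \lw(\Delta^{(1)})$, I would exhibit an explicit pencil realising the lattice width. Fix a primitive vector $v \in \ZZ^2$ along which $\Delta^{(1)}$ attains its width, so that $\Delta^{(1)}$ sits between two consecutive parallel lines at distance $\lw(\Delta^{(1)})$. Grouping the variables $X_{i,j}$ by the value of $\langle v, (i,j) \rangle$ produces a rational toric map $X_\Delta \dashrightarrow \PP^1$, whose restriction to $C_f$ is a base-point-free pencil once one removes the contribution of the toric boundary. A combinatorial intersection count, using Khovanskii's genus formula and the description of $\omega_{C_f}$ via interior lattice points, then shows that the associated $g^1_d$ satisfies $d - 2 = \lw(\Delta^{(1)})$.

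For the matching lower bound, away from the exceptional cases, I would follow Kawaguchi's approach. Any line bundle $L$ computing $\ci(C_f)$ has $h^0(L), h^1(L) \geq 2$, so both $L$ and $\omega_{C_f} \otimes L^{-1}$ are pencils and jointly span a distinguished $(1,1)$-divisor on $X_{\Delta^{(1)}}$ under the canonical embedding. The strategy is to argue, via a Bogomolov-instability or Castelnuovo-type vanishing input on the toric surface $X_{\Delta^{(1)}}$, that any such class must itself be realised by a fibration of $X_{\Delta^{(1)}}$ over $\PP^1$, hence by a primitive lattice direction; the associated width then gives $\deg L - 2 \geq \lw(\Delta^{(1)})$. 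This is the technical heart, and exactly the place where Kawaguchi originally needed an auxiliary hypothesis that \cite{linearpencils} later removed by a finer analysis of how non-toric pencils on $C_f$ interact with the adjoint linear system of $\Delta^{(1)}$.

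For the three exceptional families I would exhibit by hand a non-toric pencil forcing the drop by one. The cleanest instance is $\Delta^{(1)} = (d-3)\Sigma$, where $C_f$ is birational to a smooth plane curve of degree $d$ and the $g^2_d$ cut out by lines yields $\ci(C_f) = d - 4 = \lw((d-3)\Sigma) - 1$. The cases $\Delta^{(1)} \cong \Upsilon$ and $\Delta^{(1)} \cong 2\Upsilon$ are handled by identifying the curve model concretely and checking that a similar special pencil appears, then confirming via the generic genus, gonality and the Clifford index bound of \cite{CoppensMartens} that no further drop is possible. The main obstacle throughout is the classification step in the lower bound: pinning down precisely which interior polygons admit an unexpected pencil, and proving exhaustively that no others do.
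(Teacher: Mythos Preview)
The paper does not prove Theorem~\ref{cliffindexthm}; it is quoted as a known result from the references \cite{linearpencils,kawaguchi}, with the surrounding text explaining that the bulk is due to Kawaguchi and that a technical hypothesis was later removed in \cite{linearpencils}. There is therefore no proof in the paper to compare your proposal against.

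That said, your sketch is broadly in line with how the argument actually goes in those references: the upper bound comes from the toric pencil in a lattice-width direction (so that the gonality is at most $\lw(\Delta^{(1)})+2$, hence $\ci(C_f)\le\lw(\Delta^{(1)})$), the lower bound is the hard part and is indeed established by controlling line bundles that could compute the Clifford index via vector-bundle and instability techniques on the ambient surface, and the exceptional polygons are exactly those where the curve carries a higher-dimensional special linear series (a $g^2_d$ on a plane curve for $(d-3)\Sigma$, and the analogous series for $\Upsilon$ and $2\Upsilon$). One small inaccuracy: your description of the upper-bound pencil suggests working with $\Delta^{(1)}$ directly, but the degree of the toric pencil is read off from the width of $\Delta$ (or $\Delta^{\max}$), and the identity $\lw(\Delta^{\max})=\lw(\Delta^{(1)})+2$ already fails precisely in the $(d-3)\Sigma$ case, which is part of why that family is exceptional.
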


\noindent Here $\lw$ denotes the lattice width \cite{CaCo} and $\Delta \cong \Delta'$ 
indicates that $\Delta'$ can be obtained from $\Delta$ using a linear transformation $\RR^2 \rightarrow \RR^2 : (x \ y) \mapsto (x \ y) A + b$, where $A \in \GL_2(\ZZ)$ and
$b \in \ZZ^2$. The polygons $\Upsilon$ and $\Sigma$ are respectively given by $\conv \{(-1,-1), (1,0), (0,1) \}$ and 
$\conv \{ (0,0), (1,0), (0,1) \}$, and the scalar multiples are in Minkowski's sense.
As a corollary to Theorem~\ref{cliffindexthm}, we note that $C_f$ is non-hyperelliptic if and only if $\Delta^{(1)}$ is two-dimensional.

The proof of Theorem~\ref{cliffindexthm} entails similar interpretations for the gonality and the Clifford dimension. Finer data
that are known to be encoded in the combinatorics of $\Delta$ include the scrollar invariants~\cite{linearpencils} associated with a gonality pencil, which specialize
to the Maroni invariants in the trigonal case. Assuming that $\Delta$ satisfies
a mild condition, they also include the `scrollar ruling degrees' associated with a gonality pencil, which specialize to Schreyer's invariants $b_1, b_2$ in the case of tetragonal curves (where the mild condition is void); see~\cite{schreyersinv,intrinsicness}. 

An immediate consequence is that all these invariants depend on $\Delta$ only. This is an a priori non-trivial fact that can be rephrased as \emph{constancy} (of the Clifford index, the gonality, \dots) among
the smooth members in linear systems of curves on toric surfaces.
The existing literature contains other results of this type. For instance, work by Pareschi~\cite{pareschi} and Knutsen~\cite{knutsen_delpezzo} establishes constancy of the gonality and the Clifford index for curves in linear systems on Del Pezzo surfaces of degree at least two (recall that Del Pezzo surfaces are toric from degree six on). Recent work of Lelli-Chiesa extends this result to smooth rational surfaces $S$ whose anticanonical divisor $-K$ satisfies $h^0(S,-K) \geq 3$~\cite{lellichiesa}. Constancy of the gonality and the Clifford index may fail for linear systems on Del Pezzo surfaces of degree one; in fact this exception is also revisited by Lelli-Chiesa, who gives a natural sufficient condition for constancy in the cases where $H^0(S,-K) = 2$. 
Apart from rational surfaces, a theorem by Green and Lazarsfeld states that constancy of the Clifford index holds in linear systems on K3 surfaces~\cite{greenK3}. Here constancy of the gonality is not necessarily true, although it is known that there is only one counterexample, due to Donagi and Morrison; see~\cite{ciliberto_pareschi,knutsen}. 

\subsection*{Constancy results for the entire canonical Betti table}

In view of Theorem~\ref{cliffindexthm} and Green's canonical syzygy conjecture~\cite{greenkoszul} (see Conjecture~\ref{conj_Green} below),
it is natural to wonder whether similar constancy results hold for the entire graded Betti table 
\begin{equation} \label{curvebetti}
\begin{array}{r|cccccccc}
  & 0 & 1 & 2 & 3 & \dots & g-4 & g-3 & g - 2 \\
\hline
0 & 1 & 0 & 0 & 0 & \dots & 0 & 0 & 0 \\
1 & 0 & a_1 & a_2 & a_3 & \dots & a_{g-4} & a_{g-3} & 0 \\
2 & 0 & a_{g-3} & a_{g-4} & a_{g-5} & \dots & a_2 & a_1 & 0 \\ 
3 & 0 & 0 & 0 & 0 & \dots & 0 & 0 & \phantom{,}1, \\   
\end{array}
\end{equation}
of the canonical image of $C_f$ in $\PP^{g-1}$, where $g = g(C_f) = N_{\Delta^{(1)}}$. When writing down the above shape we use Serre duality in Koszul cohomology and 
we assume that $C_f$ is non-hyperelliptic or, equivalently, that $\Delta^{(1)}$ is two-dimensional, so that the canonical map $\kappa : C_f \rightarrow \PP^{g-1}$ is an embedding. We will
keep making this assumption throughout the rest of the article. An attractive feature of smooth curves in toric surfaces is that $\kappa$
is well understood.
Indeed, a refined version of Khovanskii's theorem provides us with a canonical divisor $K_\Delta$ on $C_f$ whose associated Riemann-Roch space
$H^0(C_f, K_\Delta)$ admits the basis $\{ \, x^i y^j \, | \, (i,j) \in \Delta^{(1)} \cap \ZZ^2 \, \}$. Thus for this choice of canonical divisor one has that 
\[ \kappa\circ\varphi_\Delta|_{U_f} = \varphi_{\Delta^{(1)}}|_{U_f}. \]
As a consequence the canonical model of $C_f$, which we denote by $C$, satisfies
\[ C \subseteq X_{\Delta^{(1)}} \subseteq \PP^{g-1}. \]
We therefore expect some interplay between the graded Betti table of $C$ and
that of $X_{\Delta^{(1)}}$, which is known to be of the form
\begin{equation} \label{toricbetti}
\begin{array}{r|ccccccc}
  & 0 & 1 & 2 & 3 & \dots & g-4 & g-3 \\
\hline
0 & 1 & 0 & 0 & 0 & \dots & 0 & 0 \\
1 & 0 & b_1 & b_2 & b_3 & \dots & b_{g-4} & b_{g-3} \\
2 & 0 & c_{g-3} & c_{g-4} & c_{g-5} & \dots & c_2 & \phantom{,}c_1, \\  
\end{array}
\end{equation}
by \cite[Lemma 1.2]{previouspaper}.

The main result of this article is the following constancy statement, whose proof is given in Section~\ref{section_constancyresults}.
We use $\partial \Delta^{(1)}$ to denote the boundary of $\Delta^{(1)}$. 
\begin{theorem} \label{maintheorem}
Let $\Delta \subseteq \RR^2$ be a lattice polygon 
such that $\Delta^{(1)}$ is two-dimensional and such that $g := | \Delta^{(1)} \cap \ZZ^2| \geq 4$. Let $f \in k[x^{\pm 1}, y^{\pm 1}]$ be supported on $\Delta$ and assume that $C_f$ is a smooth hyperplane section of the toric surface $X_\Delta$. Let $C$ be the canonical model of $C_f$, and for $\ell = 1, \ldots, g-3$ let $a_\ell$ (respectively, $b_\ell, c_\ell$) denote the graded Betti numbers of $C$ (resp., $X_{\Delta^{(1)}}$) as in~\eqref{curvebetti} (resp.,~\eqref{toricbetti}). If
\begin{itemize}
  \item the toric surface $X_{\Delta^{(1)}}$ associated with $\Delta^{(1)}$ is Gorenstein weak Fano, or
  \item $ | \partial \Delta^{(1)} \cap \ZZ^2| \geq g/2 +1$,
\end{itemize}
  then for all $\ell$ we have $a_\ell = b_\ell + c_\ell$. In particular, in these cases the graded Betti table of $C$ is independent of the coefficients of $f$.
\end{theorem}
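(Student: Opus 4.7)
\emph{Proof proposal.}
The strategy is to compare the minimal graded free resolutions of the coordinate rings $S_C$ and $S_X$ (where $X := X_{\Delta^{(1)}}$) via the short exact sequence of graded $S$-modules
\[ 0 \longrightarrow I_C/I_X \longrightarrow S_X \longrightarrow S_C \longrightarrow 0 \]
and the resulting long exact sequence in $\mathrm{Tor}^S_\bullet(-,k)$. The first step is to identify $I_C/I_X$ as a graded module. Adjunction for the embedding $C \subset X$, combined with the fact that $C$ is canonically embedded (so $\omega_C \cong \mathcal{O}_C(1)$), shows that the line bundle $\omega_X(C-1)$ restricts to $\mathcal{O}_C$; since $\mathrm{Pic}(X) \to \mathrm{Pic}(C)$ is injective (the Picard group of a toric surface is free of finite rank, while $g(C) \geq 4$), this yields $\mathcal{I}_{C/X} \cong \omega_X(-1)$ as line bundles on $X$. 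Projective normality of the embedding of $X$, available for any two-dimensional lattice polygon, then lifts this to the module-level identification $I_C/I_X \cong \omega_{S_X}(-1)$, where $\omega_{S_X}$ is the graded canonical module of $S_X$.

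By Hochster's theorem $S_X$ is Cohen-Macaulay, and the standard duality between the minimal free resolution of a Cohen-Macaulay ring and that of its canonical module reads $\beta_{p, q}(\omega_{S_X}) = \beta_{g-3-p,\, g-q}(S_X)$. Plugging this (together with the shift by $-1$) into the long exact sequence in $\mathrm{Tor}$, and noting that the Betti table of $S_X$ vanishes in its third row (since $S_X$ is $2$-regular, as recorded by~\eqref{toricbetti}), the bidegree $(\ell, \ell+1)$ portion collapses to
\[ 0 \longrightarrow b_\ell \longrightarrow a_\ell \xrightarrow{\ \partial_\ell\ } c_\ell \xrightarrow{\ \mu_\ell\ } c_{g-1-\ell}, \]
so $a_\ell = b_\ell + c_\ell$ is equivalent to $\mu_\ell = 0$. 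The map $\mu_\ell$ is induced by the inclusion $\omega_{S_X}(-1) \hookrightarrow S_X$ and has a natural interpretation in Koszul cohomology as multiplication by the defining section of $C \subset X$. The parallel analysis of the bidegree $(\ell, \ell+2)$ piece, combined with the symmetry of the canonical Betti table, reduces to exactly the same condition.

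The main obstacle is thus to prove $\mu_\ell = 0$ for every $\ell$ under each of the two hypotheses. Under the Gorenstein weak Fano assumption, $-K_X$ is Cartier, nef and big, so Kawamata-Viehweg vanishing (or its sharpening valid for toric surfaces) yields $H^i(X, \omega_X(m)) = 0$ for all $i > 0$ and $m \geq 0$; running these through the standard Koszul-cohomological machinery pins down $\omega_{S_X}$ tightly enough to force $\mu_\ell$ to vanish. Under the alternative hypothesis $|\partial \Delta^{(1)} \cap \ZZ^2| \geq g/2 + 1$, one argues combinatorially: the bound controls the support of the quadratic strand of $S_X$ so tightly that $c_\ell$ and $c_{g-1-\ell}$ cannot both be nonzero in the relevant range of $\ell$, so $\mu_\ell$ vanishes for degree reasons. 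In either case $a_\ell = b_\ell + c_\ell$ for all $\ell$, and since the right-hand side depends only on $\Delta^{(1)}$, the graded Betti table of $C$ is independent of the coefficients of $f$.
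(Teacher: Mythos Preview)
Your overall framework—setting up a long exact sequence that relates the Betti numbers of $C$, of $X_{\Delta^{(1)}}$, and of a twist of its canonical module, and then reducing everything to the vanishing of a connecting map $\mu_\ell$ from (a space of dimension) $c_\ell$ to one of dimension $c_{g-1-\ell}$—is essentially the paper's. The paper builds the same six-term sequence via Koszul cohomology on the minimal smooth resolution $X \to X_\Delta$ rather than via $\mathrm{Tor}$ on $X_{\Delta^{(1)}}$; working on the resolution sidesteps the issues your setup faces when $X_{\Delta^{(1)}}$ is non-Gorenstein (as can happen under your second hypothesis), where $\omega_X$ is not a line bundle and $C$ need not be Cartier. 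Your treatment of the second hypothesis is correct and matches the paper: by the Hering--Schenck formula $\min\{\ell \mid c_{g-\ell} \neq 0\} = |\partial\Delta^{(1)} \cap \ZZ^2|$, the bound $|\partial\Delta^{(1)} \cap \ZZ^2| \geq g/2+1$ forces at least one of $c_\ell$, $c_{g-1-\ell}$ to vanish for every $\ell$, so $\mu_\ell = 0$ trivially.

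The real gap is the Gorenstein weak Fano case. The vanishing $H^i(X, \omega_X(m)) = 0$ for $i>0$ and $m \geq 0$ that you invoke does \emph{not} use the Gorenstein weak Fano hypothesis: it holds for every two-dimensional $\Delta^{(1)}$ by Demazure and Batyrev--Borisov vanishing, and indeed it is already what makes the long exact sequence exist in the first place. Hence it cannot be what kills $\mu_\ell$; if it were, $\mu_\ell$ would vanish unconditionally, and the paper exhibits a genus-$12$ polygon for which $\mu_5 \neq 0$. What the paper actually does is identify $\mu_\ell$ concretely as multiplication by $f$ on Koszul cohomology, decompose $f = \sum_{(i,j)} c_{i,j} x^i y^j$, and show that each monomial map $\mu_{i,j}$ is zero on cohomology. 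For $(i,j)$ in the interior of $\Delta$ this is a one-line coboundary trick; for $(i,j) \in \partial\Delta$ one must write $x^iy^j$ as a product of a section of $-K$ with a section of $L$, and \emph{this} is where the Gorenstein weak Fano hypothesis is used: after reducing to $\Delta = \Delta^\text{max}$ and passing to the resolution (which requires several robustness lemmas about how the Gorenstein weak Fano property propagates among $X_{\Delta^{(1)}}$, $X_{\Delta^\text{max}}$ and $X$), one obtains the lattice-point Minkowski identity $(P_{-K} \cap \ZZ^2) + (\Delta^{(1)} \cap \ZZ^2) = \Delta \cap \ZZ^2$, i.e.\ surjectivity of $H^0(X,-K) \otimes H^0(X,L) \to H^0(X,D_f)$. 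Your phrase ``pins down $\omega_{S_X}$ tightly enough'' does not supply any of this, and no cohomological vanishing by itself will.
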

\noindent Here, we recall that a normal surface is called Gorenstein weak Fano if its anticanonical divisor is a big and nef Cartier divisor. We refer to Section~\ref{section_gorensteinweakfano} below for a discussion of this notion in the toric case, including an easy rephrasing in combinatorial terms. As we will see, one is allowed to replace the condition that $X_{\Delta^{(1)}}$ is Gorenstein weak Fano by the condition that $X_\Delta$ is Gorenstein weak Fano, but then Theorem~\ref{maintheorem} becomes strictly weaker.

Two interesting classes of polygons which are covered by Theorem~\ref{maintheorem} are:
\begin{itemize}
  \item[\emph{(a)}] $\Delta \cong d\Sigma$ for some $d \geq 5$; this
  leads to the statement that the canonical graded Betti table
of a smooth degree $d$ curve in $\PP^2$ depends on $d$ only,
  \item[\emph{(b)}] $\Delta \cong [0,a] \times [0,b]$ for some pair of integers $a,b \geq 3$; this leads to the statement that
  smooth curves in $\PP^1 \times \PP^1$ of bidegree $(a,b)$ have a canonical graded Betti table which depends on $a$ and $b$ only.
\end{itemize} 
To our knowledge, both statements are new.
Slightly more generally, the theorem applies to the five Del Pezzo surfaces of degree at least six. This yields, for instance, that
\begin{itemize}
  \item[\emph{(c)}] the canonical graded Betti table of a degree $d \geq 5 + \lfloor \delta / 3 \rfloor$ curve in $\PP^2$ having $\delta \leq 3$ nodes in general position depends on $d$ and $\delta$ only.
\end{itemize}
Other examples of Gorenstein weak Fano toric surfaces include the weighted projective plane $\PP(1,1,2)$, which can be viewed as a quadric cone in $\PP^3$. Here Theorem~\ref{maintheorem} implies that 
\begin{itemize}
  \item[\emph{(d)}] the canonical graded Betti table of a smooth curve of weighted degree $2d$ in $\PP(1,1,2)$, for some integer $d \geq 3$, only depends on $d$.
\end{itemize}
Similarly:
\begin{itemize}
 \item[\emph{(e)}] the canonical graded Betti table of smooth curves of weighted degree $6d$ in $\PP(1,2,3)$, for some integer $d \geq 2$, only depends on $d$. 
\end{itemize} 
 The polygons corresponding to all these examples are depicted in Figure~\ref{gorweakfano_examples}.
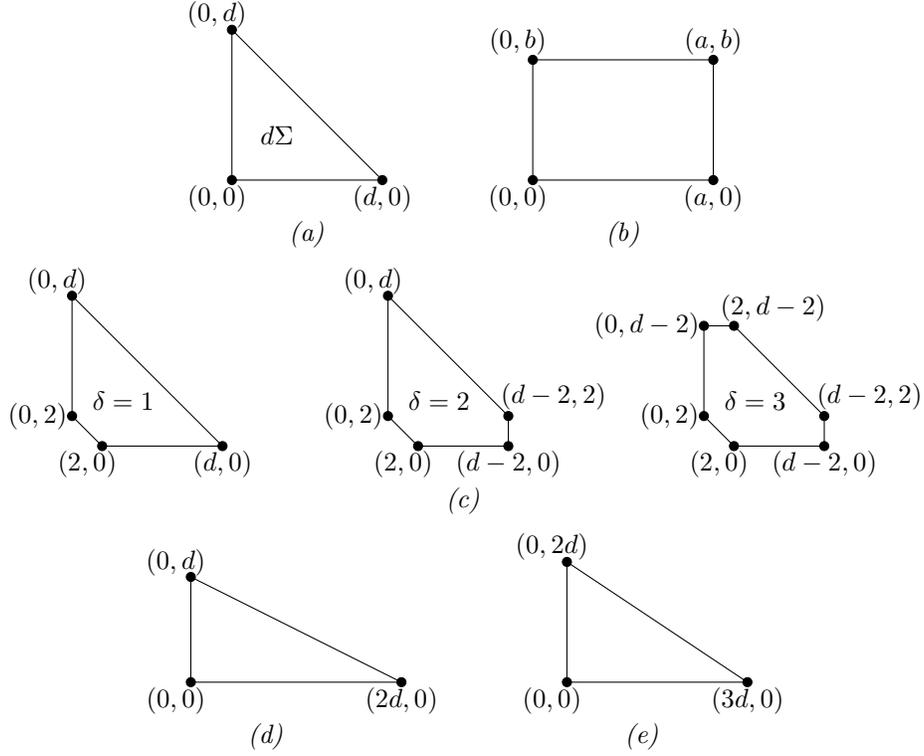
\begin{figure}[h]
\centering
\begin{tikzpicture}[scale=0.4]
  \draw (0,0)--(5,0) -- (0,5) -- (0,0);
  \node at (-0.5,-0.6) {$(0,0)$};
  \node at (5,-0.6) {$(d,0)$};
  \node at (-0.5,5.5) {$(0,d)$};
  \node at (1.5,1.5) {$d \Sigma$};
  \node at (2.5,-1.8) {\emph{(a)}};
  \draw[fill] (0,0) circle [radius=0.15];
  \draw[fill] (5,0) circle [radius=0.15];
  \draw[fill] (0,5) circle [radius=0.15];
    
    \draw (10,0)--(16,0) -- (16,4) -- (10,4) -- (10,0);
  \node at (9.5,-0.6) {$(0,0)$};
  \node at (16,-0.6) {$(a,0)$};
  \node at (9.5,4.6) {$(0,b)$};
  \node at (16,4.6) {$(a,b)$};
  \node at (13,-1.8) {\emph{(b)}};
  \draw[fill] (10,0) circle [radius=0.15];
  \draw[fill] (16,0) circle [radius=0.15];
  \draw[fill] (16,4) circle [radius=0.15];
  \draw[fill] (10,4) circle [radius=0.15];
\end{tikzpicture}

\begin{tikzpicture}[scale=0.4]
  \draw (1,0)--(5,0) -- (0,5) -- (0,1) -- (1,0);
  \node at (0.5,-0.6) {$(2,0)$};
  \node at (-1.15,1) {$(0,2)$};
  \node at (5,-0.6) {$(d,0)$};
  \node at (-0.5,5.5) {$(0,d)$};
  \node at (1.7,1.5) {$\delta = 1$};
  \draw[fill] (1,0) circle [radius=0.15];
  \draw[fill] (5,0) circle [radius=0.15];
  \draw[fill] (0,5) circle [radius=0.15];
  \draw[fill] (0,1) circle [radius=0.15];
  
  \draw (11.5,0)--(14.5,0) -- (14.5,1) -- (10.5,5) -- (10.5,1) -- (11.5,0);
  \node at (11,-0.6) {$(2,0)$};
  \node at (9.35,1) {$(0,2)$};
  \node at (14.5,-0.6) {$(d-2,0)$};
  \node at (16,1.65) {$(d-2,2)$};
  \node at (10,5.5) {$(0,d)$};
  \node at (12.2,1.5) {$\delta = 2$};
  \node at (13,-1.8) {\emph{(c)}};
  \draw[fill] (11.5,0) circle [radius=0.15];
  \draw[fill] (14.5,0) circle [radius=0.15];
  \draw[fill] (14.5,1) circle [radius=0.15];
  \draw[fill] (10.5,5) circle [radius=0.15];
  \draw[fill] (10.5,1) circle [radius=0.15];
  
  \draw (22,0)--(25,0) -- (25,1) -- (22,4) -- (21,4) -- (21,1) -- (22,0);
  \node at (21.5,-0.6) {$(2,0)$};
  \node at (19.85,1) {$(0,2)$};
  \node at (25,-0.6) {$(d-2,0)$};
  \node at (26.5,1.65) {$(d-2,2)$};
  \node at (19.1,4) {$(0,d-2)$};
  \node at (23.3,4.6) {$(2,d-2)$};
  \node at (22.7,1.5) {$\delta = 3$};
  \draw[fill] (22,0) circle [radius=0.15];
  \draw[fill] (25,0) circle [radius=0.15];
  \draw[fill] (25,1) circle [radius=0.15];
  \draw[fill] (22,4) circle [radius=0.15];
  \draw[fill] (21,4) circle [radius=0.15];
  \draw[fill] (21,1) circle [radius=0.15];
\end{tikzpicture}

\begin{tikzpicture}[scale=0.4]
  \draw (0,0)--(7,0) -- (0,3.5) -- (0,0);
  \node at (-0.5,-0.6) {$(0,0)$};
  \node at (7,-0.6) {$(2d,0)$};
  \node at (-0.5,4) {$(0,d)$};
  \draw[fill] (0,0) circle [radius=0.15];
  \draw[fill] (7,0) circle [radius=0.15];
  \draw[fill] (0,3.5) circle [radius=0.15];
  \node at (2.5,-1.8) {\emph{(d)}};
  
  \draw (12.5,0)--(18.5,0) -- (12.5,4) -- (12.5,0);
  \draw[fill] (12.5,0) circle [radius=0.15];
  \draw[fill] (18.5,0) circle [radius=0.15];
  \draw[fill] (12.5,4) circle [radius=0.15];
  \node at (12,-0.6) {$(0,0)$};
  \node at (18.5,-0.6) {$(3d,0)$};
  \node at (12,4.5) {$(0,2d)$};
  \node at (15,-1.8) {\emph{(e)}};
\end{tikzpicture}

\caption{Polygons to which Theorem~\ref{maintheorem} should be applied in order to cover examples \emph{(a-e)}}
\label{gorweakfano_examples}
\end{figure}

The class of polygons $\Delta$ for which $ | \partial \Delta^{(1)} \cap \ZZ^2| \geq g/2 + 1$, on the other hand, covers all cases where $\lw(\Delta^{(1)}) \leq 2$ by~\cite[Lem.\,9]{schreyersinv}. Such polygons correspond to
trigonal and certain tetragonal curves, where constancy was known to hold before~\cite{schreyersinv,schreyer}.

We actually believe that the sum formula $a_\ell = b_\ell + c_\ell$ is true for a considerably larger class of polygons
than the ones covered by the above theorem. Of course, even when the formula fails, it might still be true that the graded Betti table of $C$ does not depend on $f$, i.e., the
defect might depend on $\Delta$ and $\ell$ only. Examples of such behaviour are given in Section~\ref{section_constancyresults}. We leave it as an open question
whether or not this is true in general.

\subsection*{New cases of Green's conjecture}
In Section~\ref{section_greenconnections} we study connections between Green's canonical syzygy conjecture and a conjecture on graded Betti tables of toric surfaces that we have stated in a previous article~\cite{previouspaper}:
\begin{conjecture}[Green] \label{conj_Green} Let $C / k$ be a smooth
projective non-hyperelliptic curve of genus $g \geq 4$. Denote the graded Betti table of its canonical model in $\PP^{g-1}$ as in~\eqref{curvebetti}. Then 
$\min\{\ell\,|\,a_{g-\ell}\neq 0\} = \ci(C)+2$.
\end{conjecture}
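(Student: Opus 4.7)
The statement as phrased is the full Green canonical syzygy conjecture, which is one of the central open problems in curve theory, so the plan is not to attack it in complete generality but to describe the route by which the restricted version announced in the abstract (toric curves, $g\leq 32$ or $\ci(C)\leq 6$) can be attacked. The conjecture splits into two inequalities. The easy half, $\min\{\ell : a_{g-\ell}\neq 0\}\leq \ci(C)+2$, is the Green--Lazarsfeld nonvanishing theorem: a pencil (or line bundle) computing the Clifford index produces an explicit nonzero syzygy class in the predicted slot, and this is available unconditionally. The conjecture's content is therefore the vanishing half, $a_{g-\ell}=0$ for every $\ell\leq \ci(C)+1$.

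My plan for the vanishing direction is to translate the whole question into combinatorics of lattice polygons. First, apply Theorem~\ref{maintheorem} (and its variants) to write $a_\ell = b_\ell + c_\ell$ whenever $\Delta^{(1)}$ is Gorenstein weak Fano or satisfies the boundary-count hypothesis; in the targeted range of $g$ or $\ci$ it should be possible to reduce to such $\Delta$ after a case analysis. This replaces the geometry of the curve $C_f$ by the Betti table of the toric surface $X_{\Delta^{(1)}}\subseteq\PP^{g-1}$. Second, by Theorem~\ref{cliffindexthm} the Clifford index is $\lw(\Delta^{(1)})$ outside the three exceptional families, so the target slot is determined purely from $\Delta$. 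The desired vanishing becomes: for the combinatorially prescribed $\ell_0 = \lw(\Delta^{(1)})+1$, both $b_{g-\ell_0}$ and $c_{g-\ell_0}$ are zero. Third, for each of the finitely many $\GL_2(\ZZ)$-equivalence classes of interior polygons within the prescribed range, verify this vanishing explicitly, either by direct computation on a single generic $f$ (constancy then transports the conclusion to the whole linear system), or by invoking an external result such as Lelli-Chiesa's extension of Green's conjecture to smooth rational surfaces with $h^0(-K)\geq 3$, which applies precisely when $X_{\Delta^{(1)}}$ is sufficiently anticanonically positive.

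The hardest step is the last one. The enumeration of interior polygons grows quickly with $g$, so some structural organization is needed: one would stratify polygons by $\lw(\Delta^{(1)})$, handle the generic stratum by an a priori result (Voisin, Aprodu, or Lelli-Chiesa applied to a well-chosen ambient surface), and reserve computer algebra for the residual small-combinatorics cases. A parallel difficulty is that Theorem~\ref{maintheorem} does not apply universally, so for $\Delta^{(1)}$ outside its hypotheses one must either deform within $|C|$ to a special $C_{f_0}$ sitting on an auxiliary surface to which a known case of Green's conjecture does apply, or separately bound the graded Betti numbers from above using a Koszul-cohomology vanishing specific to toric embeddings.

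The main obstacles I expect are therefore twofold: matching the three sporadic families $\Delta^{(1)}\cong\Upsilon,\,2\Upsilon,\,(d-3)\Sigma$ of Theorem~\ref{cliffindexthm}, where the predicted slot is shifted by one and the sum formula must be applied with extra care; and controlling the toric summand $c_{g-\ell_0}$, since a nonzero $c$-entry in a too-early column of~\eqref{toricbetti} would by itself force $a_{g-\ell_0}\neq 0$ and break Green's conjecture. Proving the requisite toric vanishing is really the heart of the matter, and in the reverse direction it is exactly what allows the authors to extract new information about graded Betti tables of toric surfaces from the cases of Green's conjecture that are already known.
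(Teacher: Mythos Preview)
The statement is Green's conjecture itself, which the paper does not prove in full; it only establishes the restricted toric version in Theorem~\ref{thm_greenlowgenuslowcliff}. You correctly pivot to this, and your overall skeleton (nonvanishing via Green--Lazarsfeld, then reduce the vanishing half to the toric surface data $b_\ell$ and $c_\ell$) matches the paper's. But your route through Theorem~\ref{maintheorem} is an unnecessary detour that creates the very obstacles you then spend most of the proposal worrying about.

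The paper never uses the sum formula $a_\ell = b_\ell + c_\ell$ here. It uses the six-term exact sequence~\eqref{sixterms} directly: from $0 \to b_{g-\ell_0} \to a_{g-\ell_0} \to c_{g-\ell_0} \to \cdots$ one gets $a_{g-\ell_0}\leq b_{g-\ell_0}+c_{g-\ell_0}$ unconditionally, so $b_{g-\ell_0}=c_{g-\ell_0}=0$ forces $a_{g-\ell_0}=0$ with no Gorenstein weak Fano or $g/2+1$ hypothesis on $\Delta^{(1)}$. This is the content of Lemma~\ref{lemma2conj}. The vanishing $b_{g-\ell_0}=0$ is precisely Conjecture~\ref{conj_CCDL}, already verified in~\cite{previouspaper} for $g\leq 32$ or $\lw(\Delta^{(1)})\leq 6$. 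For $c_{g-\ell_0}=0$ the paper quotes Hering--Schenck's formula $\min\{\ell : c_{g-\ell}\neq 0\} = |\partial\Delta^{(1)}\cap\ZZ^2|$, so the needed input is only the inequality $|\partial\Delta^{(1)}\cap\ZZ^2|\geq \lw(\Delta^{(1)})+2$, which is far weaker than the $g/2+1$ bound in Theorem~\ref{maintheorem}. A short combinatorial lemma in Section~\ref{section_greenconnections} shows this weaker inequality holds for every $\Delta^{(1)}\not\cong\Upsilon$ in the target range.

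Consequently your anticipated ``parallel difficulty'' that Theorem~\ref{maintheorem} does not apply universally simply evaporates, and your proposed workarounds (deforming to auxiliary surfaces, invoking Lelli-Chiesa to cover gaps) are not needed. In fact Lelli-Chiesa is used in the paper only in the \emph{reverse} direction, to deduce new cases of Conjecture~\ref{conj_CCDL} from known cases of Green. The moral: for the vanishing half you only need the inequality $a_\ell \leq b_\ell + c_\ell$ implicit in~\eqref{sixterms}, not the full identity of Theorem~\ref{maintheorem}.
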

\noindent (The requirement that $C$ is non-hyperelliptic is only included for compatibility with the above discussion; the full version of Green's conjecture naturally covers hyperelliptic curves too, where it amounts to a well-known fact; see e.g.~\cite{beauville,schreyer}, which also contain more background on the role of the base field $k$.)

\begin{conjecture} \label{conj_CCDL} Let $\Delta \subseteq \RR^2$ be a lattice polygon whose interior polygon $\Delta^{(1)}$ is two-dimensional
and contains $g \geq 4$ lattice points. Assume that $\Delta^{(1)}\not\cong \Upsilon$. If we denote the graded Betti table of $X_{\Delta^{(1)}} \subseteq \mathbb{P}^{g-1}$ as in \eqref{toricbetti}, then we have that 
$$\min\{\ell\,|\,b_{g-\ell}\neq 0\} = \begin{cases} \lw(\Delta^{(1)})+1 &\mbox{if } \Delta^{(1)}\cong (d-3)\Sigma \text{ for some } d\geq 5, \\ \lw(\Delta^{(1)})+1 &\mbox{if } \Delta^{(1)}\cong 2\Upsilon, \\ \lw(\Delta^{(1)})+2 &\mbox{in all other cases.}\end{cases}$$
\end{conjecture}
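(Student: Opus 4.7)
The plan is to derive Conjecture~\ref{conj_CCDL} from Green's canonical syzygy conjecture for $C_f$, combined with the combinatorial description of the Clifford index (Theorem~\ref{cliffindexthm}) and the sum formula $a_\ell = b_\ell + c_\ell$ established in Theorem~\ref{maintheorem}.

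First, under the hypotheses of Theorem~\ref{maintheorem} one has $a_\ell = b_\ell + c_\ell$ for every $\ell$. Since all quantities are non-negative, this immediately gives
\[ \min\{\ell \,|\, b_{g-\ell} \neq 0\} \geq \min\{\ell \,|\, a_{g-\ell} \neq 0\}. \]
Assuming Green's conjecture for $C_f$, the right-hand side equals $\ci(C_f)+2$, and Theorem~\ref{cliffindexthm} identifies this number as $\lw(\Delta^{(1)})+2$ in all cases except the three exceptions, where it becomes $\lw(\Delta^{(1)})+1$. Since $\Delta^{(1)}\cong\Upsilon$ is explicitly excluded from Conjecture~\ref{conj_CCDL}, the two remaining exceptions ($\Delta^{(1)}\cong 2\Upsilon$ and $\Delta^{(1)}\cong (d-3)\Sigma$) match exactly the first two lines of the piecewise formula, while the generic case matches the third.

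To upgrade the inequality to equality I would prove the vanishing $c_{g-\ell_0}=0$ at the predicted threshold $\ell_0$; combined with Green's conjecture this forces $b_{g-\ell_0} = a_{g-\ell_0} \neq 0$. Equivalently, one must show that the second linear strand of \eqref{toricbetti} does not protrude into column $g-\ell_0$. I would attempt this via a toric Serre-type duality re-expressing $K_{g-\ell_0,2}(X_{\Delta^{(1)}},\mathcal{O}(1))$ as a linear syzygy group twisted by the toric canonical sheaf, combined with vanishing theorems for $H^0$ of differences of nef divisors on $X_{\Delta^{(1)}}$ phrased combinatorially in terms of $\Delta^{(1)}$ and its interior polygon $\Delta^{(2)}$.

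The principal obstacles are that Green's canonical syzygy conjecture is open in general, restricting unconditional progress to the regimes where it is already established (small genus or small Clifford index, as exploited in Section~\ref{section_greenconnections}); and that the sum formula $a_\ell = b_\ell + c_\ell$ is only available under the restrictive hypotheses of Theorem~\ref{maintheorem}, so it would need to be extended to cover more polygons. Arguably the hardest step is the vanishing $c_{g-\ell_0}=0$ at the predicted threshold: a direct combinatorial argument on $\Delta^{(1)}$ bypassing the curve altogether would be the cleanest approach but appears to require new input beyond what is currently understood about the second row of the toric Betti table.
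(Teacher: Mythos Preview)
This statement is a \emph{conjecture}; the paper does not prove it in general. What the paper does establish is the implication ``Green's conjecture for $C_f$ $\Rightarrow$ Conjecture~\ref{conj_CCDL} for $\Delta$'' (Lemma~\ref{lemma2conj}), and then feeds in known cases of Green's conjecture (small genus/Clifford index, or Lelli-Chiesa's theorem) to obtain the partial results Theorems~\ref{thm_greenlowgenuslowcliff} and~\ref{thm_newcasestoricconjecture}. Your overall strategy is therefore the same as the paper's, but two of the obstacles you identify are phantom.

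First, you invoke the sum formula $a_\ell = b_\ell + c_\ell$ of Theorem~\ref{maintheorem} to get $b_{g-\ell}\le a_{g-\ell}$. This is unnecessary: the six-term exact sequence~\eqref{sixterms} begins with $0\to b_\ell \to a_\ell$, so $b_\ell\le a_\ell$ holds for \emph{every} $\Delta$ and every $\ell$, with no Gorenstein weak Fano or boundary hypothesis. This is exactly how the paper proves the forward direction of Lemma~\ref{lemma2conj}. So your second listed obstacle (extending the sum formula) disappears.

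Second, your ``upgrade to equality'' step is not needed. The paper notes immediately after stating the two conjectures that the right-hand side is already known to be an \emph{upper bound} for $\min\{\ell\mid b_{g-\ell}\neq 0\}$. Combined with the lower bound you derived, you are done. Even if you wanted the vanishing $c_{g-\ell_0}=0$ directly, it is not the hard step you describe: Hering and Schenck's theorem gives $\min\{\ell\mid c_{g-\ell}\neq 0\}=|\partial\Delta^{(1)}\cap\ZZ^2|$ exactly, so the question reduces to the purely combinatorial inequality $|\partial\Delta^{(1)}\cap\ZZ^2|\ge \lw(\Delta^{(1)})+2$, which the paper verifies in the relevant ranges and which, in any case, is only used for the \emph{converse} implication in Lemma~\ref{lemma2conj}.

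The one genuine obstacle you correctly identify is that Green's conjecture is open. The paper's way around this, for the unconditional result Theorem~\ref{thm_newcasestoricconjecture}, is to invoke Lelli-Chiesa's theorem on rational surfaces with an anticanonical pencil, which supplies Green's conjecture for $C_f$ whenever $h^0(X_{\Delta^{(1)}},-K_{\Delta^{(1)}})\ge 2$.
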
 
\noindent In both conjectures the right hand side of the predicted equality is known to be an upper bound. We note that the actual version of Conjecture~\ref{conj_CCDL}, as it is formulated in~\cite{previouspaper}, covers arbitrary projectively embedded toric surfaces, i.e., not necessarily of
the form $X_{\Delta^{(1)}}$. But since this more general version is of no use to the current article, we omit it. 

Concretely, it is not hard to establish the following connection between the two conjectures (a proof will be given in Section~\ref{section_greenconnections}):
\begin{lemma} \label{lemma2conj} If
Conjecture \ref{conj_Green} holds for smooth irreducible hyperplane sections of $X_\Delta$ then Conjecture \ref{conj_CCDL} correctly predicts the length of the linear strand of
the graded Betti table of $X_{\Delta^{(1)}}$. If $|\partial \Delta^{(1)}\cap \mathbb{Z}^2|\geq \lw(\Delta^{(1)})+2$ then also the converse implication holds. 
\end{lemma}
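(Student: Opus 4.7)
The plan is to exploit Theorem~\ref{cliffindexthm} to match the right-hand sides of the two conjectures, and then to compare the corresponding Betti numbers via a Koszul cohomology argument. Assuming $\Delta^{(1)}\not\cong\Upsilon$ (which is the range of applicability of Conjecture~\ref{conj_CCDL}), Theorem~\ref{cliffindexthm} yields
\[
\ci(C_f)+2 = \begin{cases} \lw(\Delta^{(1)})+1 & \text{if } \Delta^{(1)}\cong 2\Upsilon \text{ or } (d-3)\Sigma, \\ \lw(\Delta^{(1)})+2 & \text{otherwise,} \end{cases}
\]
which coincides with the integer $L$ predicted by Conjecture~\ref{conj_CCDL} for $\min\{\ell\mid b_{g-\ell}\neq 0\}$. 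Since in both conjectures the right-hand side is already known to be an upper bound, in each direction it will be enough to transfer the lower bound $\min\geq L$.

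For the forward direction I would use that the canonical embedding of $C$ factors through $X_{\Delta^{(1)}}$ via the shared monomial basis indexed by $\Delta^{(1)}\cap\ZZ^2$, so the restriction map $H^0(X_{\Delta^{(1)}},\mathcal{O}(1))\to H^0(C,\omega_C)$ is an isomorphism (and trivially so in degree $0$). Hence the kernel $K := I_C/I_{X_{\Delta^{(1)}}}$ of the graded surjection $S_{X_{\Delta^{(1)}}}\twoheadrightarrow S_C$ is concentrated in degrees $\geq 2$, which by minimality of its free resolution forces $\operatorname{Tor}^S_p(k,K)_{p+1}=0$. Feeding this into the Tor long exact sequence for $0\to K\to S_{X_{\Delta^{(1)}}}\to S_C\to 0$ produces an injection $K_{p,1}(X_{\Delta^{(1)}},\mathcal{O}(1))\hookrightarrow K_{p,1}(C,\omega_C)$, i.e.\ $b_p\leq a_p$ for every $p$. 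Conjecture~\ref{conj_Green} applied to $C_f$ then forces $a_{g-\ell}=0$ for every $\ell<L$, so $b_{g-\ell}=0$ in the same range, giving the forward implication after combining with the upper bound.

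For the converse I would continue the same Tor long exact sequence: the cokernel of $K_{p,1}(X_{\Delta^{(1)}})\hookrightarrow K_{p,1}(C)$ injects into $\operatorname{Tor}^S_{p-1}(k,K)_{p+1}$, and this latter group can itself be related to Koszul cohomology of $S_{X_{\Delta^{(1)}}}$ by a further Tor argument, yielding an inequality of the rough shape $a_{g-\ell}\leq b_{g-\ell}+c_{g-\ell}$ on the linear strand. The hypothesis $|\partial\Delta^{(1)}\cap\ZZ^2|\geq \lw(\Delta^{(1)})+2$ then enters through the combinatorial description of the graded Betti table of $X_{\Delta^{(1)}}$ from~\cite{previouspaper}: it forces the quadratic-strand entries $c_{g-\ell}$ to vanish throughout $\ell<L$. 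Given Conjecture~\ref{conj_CCDL} we already have $b_{g-\ell}=0$ in this range, so $a_{g-\ell}=0$, and together with Green's unconditional upper bound this gives Conjecture~\ref{conj_Green} for $C_f$.

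The hard part will be the converse direction: producing the Koszul-cohomology exact sequence that yields the inequality $a\leq b+c$ cleanly enough to use, and verifying that the boundary-lattice-point hypothesis really kills the quadratic-strand entries $c_{g-\ell}$ throughout $\ell<L$. The forward direction is essentially formal once the restriction isomorphism on degree-one global sections and the resulting injection of linear-strand Koszul cohomology groups are in place.
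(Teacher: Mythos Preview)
Your overall strategy---match the predicted values via Theorem~\ref{cliffindexthm}, then transfer vanishing between $a_{g-\ell}$ and $b_{g-\ell}$ using an exact sequence in Koszul cohomology, and invoke the combinatorial control on the $c_{g-\ell}$'s for the converse---is exactly what the paper does. The difference is only in packaging.

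The paper's proof is a two-line application of the six-term sequence~\eqref{sixterms} from Section~\ref{sectionsixterms}: that sequence begins $0 \to b_\ell \to a_\ell \to c_\ell \to \dots$, so $b_\ell \le a_\ell$ (forward direction) and $a_\ell \le b_\ell + c_\ell$ (converse). For the converse, the paper then quotes Hering--Schenck's formula $\min\{\ell \mid c_{g-\ell}\neq 0\} = |\partial\Delta^{(1)}\cap\ZZ^2|$ (not \cite{previouspaper}) to kill $c_{g-(\gamma-1)}$ under the boundary hypothesis. Your Tor argument on $0 \to K \to S_{X_{\Delta^{(1)}}} \to S_C \to 0$ is the same sequence in disguise: once you identify $K$ with $\bigoplus_q H^0(X,(q-1)L+K_X)$ (via multiplication by $f$, using the vanishing established in Section~\ref{sectionsixterms}), your $\operatorname{Tor}^S_{p-1}(k,K)_{p+1}$ is precisely $K_{p-1,1}(X;K_X,L)=c_p$ by the Serre-duality identification given there. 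So your ``further Tor argument'' is not an extra step but exactly the content of Section~\ref{sectionsixterms}; you should simply cite~\eqref{sixterms} rather than rebuild it. One minor point: for the converse only the single vanishing $a_{g-(\gamma-1)}=0$ is needed (the upper bound handles the rest), so you need Hering--Schenck only at $\ell=\gamma-1$, not throughout $\ell<L$.
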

\noindent We use this to settle new cases of both Conjecture~\ref{conj_Green} and Conjecture~\ref{conj_CCDL}.

Namely, in~\cite{previouspaper} we proved that Conjecture~\ref{conj_CCDL} is true as soon as $g = N_{\Delta^{(1)}} \leq 32$ or $\lw(\Delta^{(1)}) \leq 6$, a claim which relies in part on an explicit computational verification using the data from~\cite{movingout}. As we will see, the condition $|\partial \Delta^{(1)}\cap \mathbb{Z}^2|\geq \lw(\Delta^{(1)})+2$ is always satisfied in these ranges, except in the genus $4$ case where $\Delta^{(1)} \cong \Upsilon$, which is of no concern. Through Lemma~\ref{lemma2conj} this yields: 

\begin{theorem} \label{thm_greenlowgenuslowcliff}
Green's conjecture holds for all smooth curves $C / k$ on toric surfaces of genus $g \leq 32$ or Clifford index $\ci(C) \leq 6$.
\end{theorem}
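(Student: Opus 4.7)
The plan is to apply the converse direction of Lemma~\ref{lemma2conj}, which upgrades Conjecture~\ref{conj_CCDL} to Green's conjecture once the combinatorial inequality $|\partial \Delta^{(1)}\cap \ZZ^2|\ge \lw(\Delta^{(1)})+2$ is verified, and to feed it the range of Conjecture~\ref{conj_CCDL} already established in~\cite{previouspaper}. Given a smooth non-hyperelliptic curve $C$ of genus $g \ge 4$ on a toric surface, the discussion at the start of the paper lets us write $C \cong C_f$ for some $f$ supported on a lattice polygon $\Delta$ with $\Delta^{(1)}$ two-dimensional. By Khovanskii's theorem $g = N_{\Delta^{(1)}}$, so the hypothesis $g \le 32$ translates directly into $N_{\Delta^{(1)}}\le 32$. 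By Theorem~\ref{cliffindexthm}, $\ci(C)\in \{\lw(\Delta^{(1)}),\lw(\Delta^{(1)})-1\}$, so $\ci(C)\le 6$ forces $\lw(\Delta^{(1)})\le 6$ except possibly when $\Delta^{(1)}$ belongs to the exceptional list $\{\Upsilon,2\Upsilon,(d-3)\Sigma\}$; inspecting the exceptions one sees that only $\Delta^{(1)}\cong 7\Sigma$ escapes the range $(\lw(\Delta^{(1)}) \le 6$ or $N_{\Delta^{(1)}}\le 32)$ covered by~\cite{previouspaper}.

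The next step is to verify the boundary inequality $|\partial \Delta^{(1)}\cap \ZZ^2|\ge \lw(\Delta^{(1)})+2$ throughout the relevant combinatorial range. When $\lw(\Delta^{(1)})\le 2$, it is immediate from~\cite[Lemma~9]{schreyersinv}: such polygons fit between two parallel lattice lines, forcing a sufficiently large boundary. For $3\le \lw(\Delta^{(1)})\le 6$ or $N_{\Delta^{(1)}}\le 32$, the inequality can be confirmed by a finite enumeration based on the classification tables of~\cite{movingout} that already underpin the arguments of~\cite{previouspaper}. A case inspection shows that the only violation in the relevant range is $\Delta^{(1)}\cong \Upsilon$ (so $g=4$), where Green's conjecture is classical.

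With Conjecture~\ref{conj_CCDL} and the boundary inequality both in place, Lemma~\ref{lemma2conj} directly yields Green's conjecture for $C$. The one loose end is $\Delta^{(1)}\cong 7\Sigma$: here $C_f$ is birational to a smooth plane curve of degree $10$ (its Clifford dimension is $2$), so Green's conjecture follows from the classical result of Loose for smooth plane curves. The most delicate part of the plan is the combinatorial boundary check, as it requires knowing that the classification of two-dimensional interior polygons with small $\lw$ or small $N$ is tight enough to guarantee $|\partial \Delta^{(1)}|$ grows at least linearly with $\lw(\Delta^{(1)})$; cleanly carrying out this finite verification, together with isolating the $\Upsilon$ and $7\Sigma$ outliers, is the main technical obstacle, while the rest of the argument amounts to combining Lemma~\ref{lemma2conj} with inputs already recorded in~\cite{previouspaper,schreyersinv}.
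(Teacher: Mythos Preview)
Your proposal follows the same route as the paper: realize $C$ as a curve $C_f$ with $\Delta^{(1)}$ two-dimensional, feed the verified range of Conjecture~\ref{conj_CCDL} from~\cite{previouspaper} (namely $N_{\Delta^{(1)}}\le 32$ or $\lw(\Delta^{(1)})\le 6$) into the converse direction of Lemma~\ref{lemma2conj}, and check the auxiliary inequality $|\partial\Delta^{(1)}\cap\ZZ^2|\ge\lw(\Delta^{(1)})+2$ by the finite enumeration based on~\cite{movingout}. The paper compresses this into two sentences, proving the boundary inequality in a separate lemma just before the theorem (with the same exception $\Delta^{(1)}\cong\Upsilon$ you found) and then invoking Theorem~\ref{cliffindexthm} to pass from the hypothesis $\ci(C)\le 6$ to the combinatorial range.

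The one place where you are more scrupulous than the paper is the case $\Delta^{(1)}\cong 7\Sigma$, i.e.\ smooth plane curves of degree~$10$: here Theorem~\ref{cliffindexthm} gives $\ci(C_f)=\lw(\Delta^{(1)})-1=6$, yet $\lw(\Delta^{(1)})=7$ and $N_{\Delta^{(1)}}=36$, so the hypothesis $\ci(C)\le 6$ does not literally place $\Delta^{(1)}$ inside the range where~\cite{previouspaper} is stated to verify Conjecture~\ref{conj_CCDL}. The paper's short proof does not isolate this case explicitly; you dispose of it cleanly by citing Loose's theorem for smooth plane curves. Apart from this extra bookkeeping the two arguments coincide.
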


Conversely, Lelli-Chiesa's aforementioned work~\cite{lellichiesa} settles Green's conjecture for smooth curves on smooth rational surfaces whose anticanonical divisor has enough sections; see Theorem~\ref{lellichiesatheorem} for a precise formulation of the latter condition. We will show that in the case of smooth toric surfaces $X$ with canonical divisor $K$, the condition is equivalent to $h^0(X, -K) \geq 2$. A short reasoning then allows us to conclude that Green's conjecture holds for all smooth hyperplane sections of $X_\Delta$, for any lattice polygon $\Delta$ which satisfies $h^0(X_{\Delta^{(1)}}, -K_{\Delta^{(1)}}) \geq 2$. As before, it is assumed that $\Delta^{(1)}$ is two-dimensional and that $|\Delta^{(1)} \cap \ZZ^2| \geq 4$, and $K_{\Delta^{(1)}}$ denotes a canonical divisor on $X_{\Delta^{(1)}}$. Then Lemma~\ref{lemma2conj} implies:

\begin{theorem} \label{thm_newcasestoricconjecture}
 Conjecture~\ref{conj_CCDL} holds for all lattice polygons $\Delta$ such that $\Delta^{(1)}$ is two-dimensional, contains at least $4$ lattice points, and satisfies $h^0(X_{\Delta^{(1)}}, -K_{\Delta^{(1)}}) \geq 2$.
\end{theorem}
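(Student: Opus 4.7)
The plan combines Lemma~\ref{lemma2conj} with the Lelli-Chiesa theorem cited just before the statement. By the forward implication in Lemma~\ref{lemma2conj}, it suffices to establish Green's conjecture (Conjecture~\ref{conj_Green}) for every smooth irreducible hyperplane section $C_f \subseteq X_\Delta$: since Conjecture~\ref{conj_CCDL} is precisely a statement about $\min\{\ell : b_{g-\ell} \neq 0\}$, i.e., the length of the linear strand of the Betti table of $X_{\Delta^{(1)}}$, this reduces the problem entirely to a statement about $C_f$.

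To verify Green's conjecture for $C_f$, I would realise the curve as a smooth divisor on a suitable smooth rational surface and apply Lelli-Chiesa's theorem. The natural candidate is a toric desingularization $\pi: \widetilde X \to X_{\Delta^{(1)}}$; such a resolution always exists by refining the fan of $X_{\Delta^{(1)}}$, and $\widetilde X$ is automatically a smooth projective rational toric surface. The canonical model $C \subseteq X_{\Delta^{(1)}}$ is smooth, so its strict transform $\widetilde C$ under $\pi$ is a smooth curve in $\widetilde X$ isomorphic to $C$, hence to $C_f$. The hypothesis in Lelli-Chiesa's theorem, specialised to the smooth toric surface $\widetilde X$, amounts to $h^0(\widetilde X, -K_{\widetilde X}) \geq 2$ by virtue of the rephrasing promised in the paragraph preceding the theorem.

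The step I expect to be the main obstacle is therefore to upgrade the hypothesis $h^0(X_{\Delta^{(1)}}, -K_{\Delta^{(1)}}) \geq 2$ to the analogous bound for the resolution $\widetilde X$. Combinatorially, both quantities count lattice points in a polytope of the form $\{m \in M_\RR : \langle m, u_\rho \rangle \geq -1 \text{ for every ray } \rho\}$ associated to the ambient fan; refining the fan to resolve singularities adds new inequalities and can in principle shrink this count. The two-dimensional setting is crucial: a minimal toric resolution in dimension two only inserts rays strictly inside existing two-dimensional cones, and one must track which of the resulting anticanonical inequalities are redundant at the level of lattice points. A careful bookkeeping of this refinement --- or equivalently a direct combinatorial verification of Lelli-Chiesa's hypothesis on the possibly singular surface $X_{\Delta^{(1)}}$ itself --- should deliver the required bound. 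Once this is in place, Lelli-Chiesa yields Green's conjecture for $\widetilde C \cong C_f$, and Lemma~\ref{lemma2conj} concludes.
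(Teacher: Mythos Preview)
Your overall strategy matches the paper's: apply Lelli-Chiesa's theorem to obtain Green's conjecture for $C_f$, then invoke Lemma~\ref{lemma2conj}. The divergence is in \emph{which} smooth toric surface you work on. You propose resolving $X_{\Delta^{(1)}}$ and using the strict transform $\widetilde C$ of the canonical model $C$. The paper instead first replaces $\Delta$ by $\Delta^{\text{max}}$ (legitimate, since only $\Delta^{(1)}$ enters the statement), resolves $X_\Delta$, and uses the pull-back $C'$ of the hyperplane section $C_f$ --- precisely the configuration of Section~\ref{sectionsixterms} and of the paper's formulation of Lelli-Chiesa's result (Theorem~\ref{lellichiesatheorem}).

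This choice is not cosmetic. The ``rephrasing'' you invoke --- that on a smooth toric surface the Lelli-Chiesa hypotheses collapse to $h^0(-K)\geq 2$ --- is not a general principle; in the paper it is established only for the pair $(X,D_f)$. The first Lelli-Chiesa condition, that $L=D_f+K$ be big and nef, is exactly Lemma~\ref{fujitalemma}, whose proof uses $P_{D_f}=\Delta$ and the minimality of $X\to X_\Delta$. The third condition is Lemma~\ref{lem_ciK}, again stated for a divisor $D$ on $X$ with $P_D=\Delta$. Neither result transfers to your $(\widetilde X,[\widetilde C])$: the class $[\widetilde C]$ is not of the form $D_f$, its polygon is not $\Delta$, and there is no evident reason that $[\widetilde C]+K_{\widetilde X}$ be nef --- writing it as $\sigma^*H$ plus the discrepancy divisor, the latter has typically negative coefficients at non-Gorenstein cyclic quotient points. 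So the step you single out as ``the main obstacle'' is in fact not the only one; verifying the first and third Lelli-Chiesa conditions on $\widetilde X$ would require genuinely new arguments.

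The paper sidesteps all of this by staying in the $X\to X_\Delta$ framework and instead proving combinatorially that a nonzero lattice point of $P_{-K_{\Delta^{(1)}}}$ (normalised to $(1,0)$) survives both the passage to $\Delta^{\text{max}}$ and the minimal resolution $X\to X_{\Delta^{\text{max}}}$: one checks that every inserted ray has primitive generator $(a,b)$ with $a\geq -1$, so $(1,0)\in P_{-K}$. Theorem~\ref{lellichiesatheorem} then applies out of the box, with Lemmas~\ref{fujitalemma} and~\ref{lem_ciK} already covering the remaining hypotheses.
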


\noindent The condition $h^0(X_{\Delta^{(1)}}, -K_{\Delta^{(1)}}) \geq 2$ has an easy and well-known combinatorial interpretation, which is recalled in Section~\ref{section_greenconnections} (it appears as a proof ingredient there).


\section{An exact sequence involving six terms} \label{sectionsixterms}

Let $\Delta$ be a lattice polygon with a two-dimensional interior lattice polygon $\Delta^{(1)}$. Let 
$f \in k[x^{\pm 1}, y^{\pm 1} ]$ be an irreducible Laurent polynomial as in \eqref{eq_Laurentpoly} and assume
that $C_f$ is a smooth hyperplane section of $X_\Delta$. 
Let $\rho : X \rightarrow X_\Delta$ be the \emph{minimal} toric resolution of singularities, i.e., $X$ is
the toric surface associated with the smooth subdivision of the inner normal fan to $\Delta$ in which no more
new rays are introduced than needed (remember that a subdivision is smooth if and only if the corresponding toric surface is smooth, which holds if and only if the primitive generators of each pair of adjacent rays form a basis of $\ZZ^2$ as a $\ZZ$-module). It can be obtained using Hirzebruch-Jung continued fractions
as described in \cite[\S10.2]{coxlittleschenck}. 
Let $K$ be the canonical divisor on $X$ obtained
by taking minus the sum of all torus-invariant prime divisors \cite[Thm.\,8.2.3]{coxlittleschenck}.

Because $C_f$ does not meet the singular locus of $X_\Delta$, it pulls
back to an isomorphic curve $C'$ on $X$. 
Define $D_f = C' - \Div(f)$, where $f$ is viewed as a function on $X$ by pushing it forward along $\varphi_\Delta$ and then pulling it back along $\rho$. This is a torus-invariant
divisor that is linearly equivalent to $C'$. 

\begin{lemma} \label{fujitalemma}
Letting $\Delta$, $f \in k[x^{\pm 1}, y^{\pm 1} ]$, $D_f$ and $K$ be as above, one has:
\begin{itemize}
  \item the divisor $D_f$ is base-point free, and the polygon $P_{D_f}$ associated with $D_f$ is $\Delta$,
  \item its adjoint divisor $L := D_f + K$ is also base-point free, and the polygon $P_L$
  associated with $L$ is $\Delta^{(1)}$.
\end{itemize}
\end{lemma}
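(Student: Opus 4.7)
My plan is to identify $D_f$ explicitly as a torus-invariant divisor on $X$ with computable coefficients, read off its associated polygon, and then deduce the corresponding facts for the adjoint $L$. First I would observe that $D_f$ is torus-invariant: on $(k^\ast)^2$ the Laurent polynomial $f$ is regular with zero locus $U_f$, so on $X$ we have $\Div(f) = C' + E$ with $E$ a $\ZZ$-linear combination of the torus-invariant prime divisors $D_\rho$, and hence $D_f = -E$. Writing $D_f = \sum_\rho a_\rho D_\rho$, the standard order-of-vanishing formula for Laurent polynomials along torus-invariant divisors gives $a_\rho = -\mathrm{ord}_{D_\rho}(f) = -\min_{v \in \Delta}\langle u_\rho, v\rangle$, where one uses the smoothness of $C_f$ (equivalently, non-degeneracy of the edge and vertex coefficients of $f$) to ensure that the minimum is actually attained by the support of $f$.

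With these coefficients in hand, $P_{D_f} = \{u \in M_\RR : \langle u, u_\rho\rangle \geq \min_v\langle u_\rho, v\rangle \text{ for all }\rho\}$. The inequalities indexed by old rays (coming from edges of $\Delta$) are exactly the facet description of $\Delta$. For a new ray $\rho$ inserted by the Hirzebruch--Jung resolution between old neighbours $\rho_1, \rho_2$, the primitive generator satisfies $u_\rho = \alpha u_{\rho_1} + \beta u_{\rho_2}$ with $\alpha, \beta > 0$, and $\min_v\langle u_\rho, v\rangle$ is attained at the common vertex $v^\ast$ of the two adjacent edges; hence the new inequality is implied by the old ones and $P_{D_f} = \Delta$. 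Base-point freeness of $D_f$ then follows because it is the pullback under $\rho$ of the very ample hyperplane class on $X_\Delta \subseteq \PP^{N_\Delta-1}$, hence is nef, and nef equals base-point free on the smooth complete toric surface $X$. Concretely, above each vertex $v$ of $\Delta$ the monomial $\chi^v \in H^0(X, D_f)$ does not vanish at the associated torus-fixed points.

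Turning to $L = D_f + K = \sum_\rho(a_\rho - 1) D_\rho$, the polygon $P_L = \{u : \langle u, u_\rho\rangle \geq -a_\rho + 1\}$ is $\Delta$ shrunk inward by one unit along every ray of the smooth fan. To show $P_L = \Delta^{(1)}$ I would first verify equality of lattice-point sets. On the one hand, any lattice point $u \in P_L$ satisfies $\langle u, u_{\rho'}\rangle > \min_v\langle u_{\rho'}, v\rangle$ for every old ray $\rho'$, so $u$ lies in the interior of $\Delta$. Conversely, for an interior lattice point $u$, integrality upgrades the strict inequalities for old rays to $\langle u, u_{\rho'}\rangle \geq \min_v\langle u_{\rho'}, v\rangle + 1$; for a new ray $u_\rho = \alpha u_{\rho_1} + \beta u_{\rho_2}$ the difference $\langle u, u_\rho\rangle - \langle v^\ast, u_\rho\rangle$ is a nonnegative integer bounded below by $\alpha + \beta > 0$, hence is at least $1$. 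This yields $P_L \cap M = \Delta^{(1)} \cap \ZZ^2$.

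The main obstacle is to upgrade this lattice-point equality to an equality of polygons, i.e., to show that $P_L$ is itself a lattice polygon. This is where the minimality of the resolution enters decisively: the new rays of the smooth refinement are precisely those needed to shave off the non-lattice corners of the na\"ive adjoint (the polygon defined using only old-ray inequalities), so that the vertices of $P_L$ all become lattice points. Locally at each singular vertex of $\Delta$ this amounts to a Hirzebruch--Jung continued fraction computation on the corresponding cone, which I would either carry out explicitly or invoke from \cite[Ch.\,10]{coxlittleschenck}. Once this is granted, $P_L = \conv(P_L \cap M) = \conv(\Delta^{(1)} \cap \ZZ^2) = \Delta^{(1)}$. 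Base-point freeness of $L$ is then immediate from the same principle as for $D_f$: at every maximal cone $\sigma$ of the smooth fan the two rays form a $\ZZ$-basis, so the vertex $v_\sigma \in P_L$ cut out by the two corresponding tight inequalities is automatically a lattice point, and $\chi^{v_\sigma}$ provides a non-vanishing global section of $L$ at the torus-fixed point $x_\sigma$.
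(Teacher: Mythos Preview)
Your overall framework matches the paper's: identify $D_f$ as torus-invariant with the expected coefficients, read off $P_{D_f}=\Delta$, then analyse $P_L$ as the shrunk polygon. Your lattice-point equality $P_L\cap\ZZ^2=\Delta^{(1)}\cap\ZZ^2$ is correct, and your argument for base-point freeness of $D_f$ via pullback of a very ample class is a clean alternative to the paper's combinatorial check.

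The genuine gap is at the end, and it is precisely the content of the lemma. You defer the step ``$P_L$ is a lattice polygon'' to an unexecuted Hirzebruch--Jung computation using minimality, but this is not something one can simply invoke from \cite[Ch.\,10]{coxlittleschenck}; it is the heart of the proof and where minimality of the resolution is actually used. Moreover, your subsequent deduction of base-point freeness is backwards. Knowing that $P_L$ is a lattice polygon (or even that $P_L=\Delta^{(1)}$) does \emph{not} imply that for every maximal cone $\sigma=\mathrm{cone}(v,w)$ of the smooth fan the point $v_\sigma=L(v,a_v-1)\cap L(w,a_w-1)$ lies in $P_L$: this point is a lattice point by smoothness, but it could sit outside $P_L$ if other inequalities cut it off. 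The base-point-free criterion \cite[Prop.\,6.1.1]{coxlittleschenck} asks exactly for $v_\sigma\in P_L$, and on a smooth surface the implication runs the other way: once all $v_\sigma\in P_L$, one gets $P_L=\conv\{v_\sigma\}$ and hence $P_L$ is a lattice polygon.

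The paper takes this correct direction. It shows directly that each $m_1=v_\sigma$ lies in $P_L$ by a concrete combinatorial argument: after normalising $v=(1,0)$, $w=(0,1)$, one must check that the supporting lines of the edges of $\Delta$ adjacent to the vertex $m=(-a_v,-a_w)$, when shifted inward by~$1$, still pass on the correct side of $m_1=m+(1,1)$. This reduces to a determinant/primitivity argument on the chain of new rays between the old ones, and it is precisely here that minimality of the resolution is invoked (a non-minimal insertion would allow a ray with $x_i=-y_i$, breaking the inequality). Once base-point freeness is established, $P_L$ is automatically a lattice polygon, and your sandwich $\Delta^{(1)}\subseteq P_L\subseteq\mathrm{int}(\Delta)$ finishes the identification $P_L=\Delta^{(1)}$. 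So rather than proving ``lattice polygon'' first and hoping base-point freeness falls out, you should aim directly at $v_\sigma\in P_L$; that computation is unavoidable and is what your deferred ``HJ step'' would have to become.
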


\noindent The second statement might be of interest to people studying 
Fujita type results; see \cite{laterveer,mustata}.
Here the minimality of our resolution $X \rightarrow X_\Delta$ is important, as the reader can tell from the proof below.
Also recall that for divisors on a smooth toric surface, the notions of base-point free and nef 
are synonymous~\cite[Thms.\,6.1.7 and 6.3.12]{coxlittleschenck}.

\begin{proof}
Let $\Sigma_\Delta$ be the fan of $X_\Delta$ (i.e., the inner normal fan to $\Delta$) and let $\Sigma$ be the fan of $X$. Denote by $U(\Sigma)$ the set of primitive generators of the rays of $\Sigma$, and let $U(\Sigma_\Delta)\subseteq U(\Sigma)$ be 
the subset of vectors that correspond to rays of $\Sigma_\Delta$. Since the divisor $D_f$ is torus-invariant, it is of the form $\sum_{v\in U(\Sigma)} a_v D_v$, where $D_v\subseteq X$ is the prime divisor corresponding to the ray generated by $v$. Let $H(v,a_v)$ be the half-plane of points $x\in\mathbb{R}^2$ satisfying $\langle x,v\rangle\geq -a_v$ and let $L(v,a_v)$ be the line defined by $\langle x,v\rangle=-a_v$. 
As explained in \cite[\S4]{linearpencils}, we have that 
\begin{equation} \label{formulaforPDf}
P_{D_f}=\bigcap_{v\in U(\Sigma)}\, H(v,a_v)=\bigcap_{v\in U(\Sigma_\Delta)}\, H(v,a_v)=\Delta.
\end{equation}
Moreover, if $u\in U(\Sigma)\setminus U(\Sigma_\Delta)$ corresponds to a ray that lies in between two consecutive rays of $\Sigma_\Delta$ with primitive generators $v,w\in U(\Sigma_\Delta)$, then $L(u,a_u)$ passes through the vertex $L(v,a_v)\cap L(w,a_w)$ of $\Delta$. In other words, if $v,w\in U(\Sigma)$ correspond to consecutive rays of $\Sigma$, then $L(v,a_v)\cap L(w,a_w)\in \Delta$. By \cite[Prop.\,6.1.1]{coxlittleschenck}, this just means that $D_f$ is base-point free (which also follows directly from the fact that $C'$ is the pull-back of the base-point free divisor $C_f$ on $X_\Delta$, but we will reuse this combinatorial criterion below).

Since $K=-\sum_{v\in U(\Sigma)} D_v$, we have that $L=\sum_{v\in U(\Sigma)} (a_v-1) D_v$. It follows that the polygon associated with $L$ is 
\begin{equation} \label{formulaforPL} 
 P_L=\bigcap_{v\in U(\Sigma)}\, H(v,a_v-1).
\end{equation}
To prove that $L$ is base-point free, again
by \cite[Prop.\,6.1.1]{coxlittleschenck} it suffices to show that for all $v,w\in U(\Sigma)$ that correspond to adjacent rays, the lattice point $m_1=L(v,a_v-1)\cap L(w,a_w-1)$ belongs to $P_L$.
Because $X$ is smooth, the vectors $v,w$ form a basis of $\mathbb{Z}^2$, hence using a unimodular transformation if needed we may assume that $v=(1,0)$ and $w=(0,1)$. Then the point $m_1$ becomes $(-a_v+1,-a_w+1)$. From the base-point-freeness of $D_f$ we know that $$m=(-a_v,-a_w)\in \Delta\subseteq H(v,a_v)\cap H(w,a_w).$$ 

Now consider $v',w'\in U(\Sigma_\Delta)$ such that $m\in L(v',a_{v'})\cap L(w',a_{w'})$, so $v'$ and $w'$ are the primitive normal vectors of the edges of $\Delta$ that are adjacent to the vertex $m$. We can assume that $L(v',a_{v'})$ is steeper than $L(w',a_{v'})$, and note that it could happen that $v'=v$ and/or $w'=w$. In order to prove that $m_1 \in P_L$, it suffices to show that $L(v',a_{v'})$ passes strictly above $m_1$ and that $L(w',a_{w'})$ passes strictly below $m_1$. We only prove the statement for $v'$; the one for $w'$ follows by symmetry.  
\begin{figure}[h]
\centering
\begin{tikzpicture}[scale=0.4]
  \draw[dashed] (-1,0)--(5,0);
  \draw[dashed] (0,-1)--(0,5);
  \draw[dashed] (-1,-0.4)--(5,2);
  \draw[dashed] (-0.4,-1)--(1.75,4.375);
  \draw[fill] (0,0) circle [radius=0.15];
  \draw[fill] (1,1) circle [radius=0.15];
  \draw[thick] (0,0) -- (2.5,1) -- (4.5,2.4) -- (4,4) -- (3,4) -- (1,2.5) -- (0,0);
  \node at (4.7,4) {$\Delta$};
  \node at (1.4,1.4) {$m_1$};
  \node at (-0.5,0.4) {$m$};
  \node at (4,-0.6) {$L(w,a_w)$};
  \node at (5.6,1) {$L(w',a_{w'})$};
  \node at (-1.6,4.8) {$L(v,a_v)$};
  \node at (2.2,4.8) {$L(v',a_{v'})$};  
\end{tikzpicture}
\qquad \qquad 
\begin{tikzpicture}[scale=0.4]
  \draw[->] (0,0) -- (3.75,-1.5);  
  \draw[->] (0,0) -- (-1.5,3.75);
  \draw[->] (0,0) -- (2,0);
  \draw[->] (0,0) -- (0,2);
  \node at (4,-1.75) {$v'$};
  \node at (4,0) {$v=(1,0)$};
  \node at (1.2,2.5) {$w=(0,1)$};
  \node at (-2,3.75) {$w'$};
\end{tikzpicture}
\caption{Rays of $X$ and $X_\Delta$ that are adjacent to $m$}
\label{PandQ}
\end{figure}
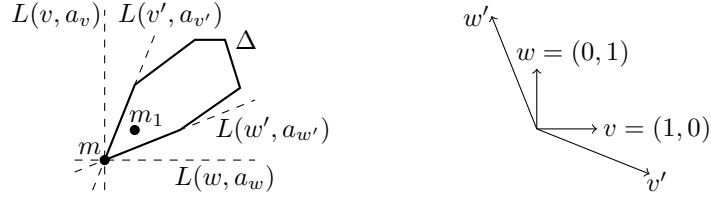

Let $v_0=v,v_1,\ldots,v_n=v'$ be the vectors in $U(\Sigma)$ from $v$ up to $v'$ going clockwise.  
We claim that all $v_i$ satisfy $x_i > -y_i$, where $v_i = (x_i,y_i)$. 
For $i=n$, this claim tells us that $L(v',a_{v'})$ passes strictly above $m_1$. 
Suppose our claim is false and let $i$ be minimal such that $x_i \leq -y_i$. Note that $i>0$. 
It is impossible that $x_i = -y_i$, because in that case $w=(0,1)$ and $v_i=(1,-1)$ would
be a basis of $\mathbb{Z}^2$, so would be able to delete the rays corresponding to 
$v_j\in U(\Sigma)$ with $j < i$, while the associated toric surface would still be a resolution of singularities of $X_{\Delta}$, contradicting the minimality assumption. So
$x_i<-y_i$. Also $x_{i-1} > -y_{i-1}$, by the minimality of $i$. Now $v_{i-1}$ and $v_i$ must form
a basis of $\mathbb{Z}^2$ and hence the determinant $x_i y_{i-1} - x_{i-1} y_i$ of the matrix formed
by $v_i$ and $v_{i-1}$ is $\pm 1$. But $x_{i-1}(-y_i) > x_{i-1}x_i > (-y_{i-1})x_i$, and since we
have two strict inequalities, the difference is at least $2$. This contradicts that
the determinant is $\pm 1$, proving our claim.

It remains to show that $P_L = \Delta^{(1)}$. Because $L$ is base-point free, again from the criterion \cite[Prop.\,6.1.1]{coxlittleschenck} we see that $P_L$ is a lattice polygon. From~\eqref{formulaforPDf} and~\eqref{formulaforPL} one sees that $P_L$ is contained in the topological interior of $\Delta$. On the other hand $\Delta^{(1)} \subseteq P_L$ because every interior lattice point lies at integral distance at least $1$ from the boundary. The desired conclusion follows. 
\end{proof} 

The above lemma is valuable in investigating how the graded Betti table~\eqref{curvebetti} of the canonical model $C$ of $C_f$ relates to the graded Betti table~\eqref{toricbetti} of $X_{\Delta^{(1)}}$. 
We assume that the reader is familiar with how the entries $a_\ell, b_\ell, c_\ell$ for $\ell = 1, \ldots, g-3$ arise as dimensions of Koszul cohomology spaces.
We refer to~\cite{nagelaprodu} for more background, and to \cite[\S2]{previouspaper} and~\cite{heringphd}
for a discussion that is specific to toric surfaces. For what follows, it is convenient to define $a_0 = b_0 = c_0 = a_{g-2} = b_{g-2} = c_{g-2} = 0$. 

Our starting point is the standard exact sequence
$0 \rightarrow \mathcal{O}_X(-C') \rightarrow \mathcal{O}_X \rightarrow \mathcal{O}_{C'} \rightarrow 0$
of sheaves of $\mathcal{O}_X$-modules.
It can be rewritten as
\[ 0 \rightarrow \mathcal{O}_X(-D_f) \stackrel{\mu_f}{\longrightarrow} \mathcal{O}_X \longrightarrow \mathcal{O}_{C'} \rightarrow 0, \]
where $\mu_f$ denotes multiplication by the function $f$. By the adjunction formula $K_{C'} := L|_{C'}$
is a canonical divisor on $C'$. Tensoring the above exact sequence with $\mathcal{O}_X(qL)$ then gives exact sequences
\[
 0 \rightarrow \mathcal{O}_X((q-1)L + K) \stackrel{\mu_f}{\longrightarrow} \mathcal{O}_X(qL) \longrightarrow \mathcal{O}_{C'}(qK_{C'}) \rightarrow 0 
\]
for all $q \geq 0$.
We claim that
 $H^1(X, (q-1)L + K) = 0$, which
by Serre duality~\cite[Thm.\,9.2.10]{coxlittleschenck} is equivalent to $H^1(X,(1-q)L) = 0$. Indeed for $q = 0$ and $q = 1$
this is true by Demazure vanishing~\cite[Thm.\,9.2.3]{coxlittleschenck}, while for $q \geq 2$ it follows
from Batyrev-Borisov vanishing~\cite[Thm.\,9.2.7(a)]{coxlittleschenck}. In both cases we used that $L$ is base-point free, while in the latter case we also used that $P_L = \Delta^{(1)}$
is two-dimensional. Thus by taking cohomology we obtain a short exact sequence
\[  0 \rightarrow \bigoplus_{q \geq 0} H^0(X, (q-1)L + K) \stackrel{\mu_f}{\longrightarrow} \bigoplus_{q \geq 0} H^0(X,qL) \longrightarrow \bigoplus_{q \geq 0} H^0(C',qK_{C'}) \rightarrow 0 \]
of $k$-vector spaces. In a natural way, this can be viewed as an exact sequence of graded modules over $S_{\Delta^{(1)}}=S^*V_{\Delta^{(1)}}$, where
$V_{\Delta^{(1)}} = H^0(X,L)$ and $S^*$ denotes the symmetric algebra. This claim relies on the fact that $H^0(X,L) \cong H^0(C',K_{C'})$, which is true because $H^0(X,K) = 0$ in the case of toric surfaces. The notation $V_{\Delta^{(1)}}$ is taken from~\cite[\S2]{previouspaper} and emphasizes that $H(X,L)$ can be viewed as the subspace of $k[x^{\pm 1}, y^{\pm 1}]$ consisting of those Laurent polynomials that are supported on $P_L = \Delta^{(1)}$. By \cite[Cor.\,(1.d.4)]{greenkoszul} or \cite[Lem.\,1.25]{nagelaprodu}
we find a long exact sequence in Koszul cohomology:
\[ \dots \rightarrow K_{p,q-1}(X;K,L) \stackrel{\mu_f}{\longrightarrow} K_{p,q}(X,L) \longrightarrow K_{p,q}(C',K_{C'}) \qquad \qquad \qquad \qquad \qquad \]
\[ \qquad \qquad \longrightarrow K_{p-1,q}(X;K,L) \stackrel{\mu_f}{\longrightarrow} K_{p-1,q+1}(X,L) 
\longrightarrow K_{p-1,q+1}(C',K_{C'}) \rightarrow \dots \]
Now note that the image of
$X \stackrel{| L |}{\longrightarrow} \mathbb{P}^{g - 1}$, where $g=h^0(X,L)=|\Delta^{(1)}\cap\mathbb{Z}^2|$,
is nothing else but $X_{\Delta^{(1)}}$. Thus
\begin{align*}
 b_\ell & = \dim K_{\ell,1}(X,L) = \dim K_{g-3-\ell,2}(X;K,L), \\ 
 c_\ell & = \dim K_{g - 2 - \ell,2}(X,L) = \dim K_{\ell-1,1}(X;K,L)
\end{align*}
for $\ell = 0, 1, \ldots, g - 2$,
where the last equalities again follow from Serre duality, as explained in more detail in~\cite[\S2.1]{previouspaper}.
Combining these formulas with $a_\ell = \dim K_{\ell,1}(C',K_{C'})$ we find
for each $\ell=0, 1, \ldots, g - 2$ our desired exact sequence, of the form
\begin{equation} \label{sixterms} 
  0 \rightarrow b_\ell \rightarrow a_\ell \rightarrow c_\ell \stackrel{\mu_f}{\longrightarrow} c_{g - 1 - \ell} \rightarrow a_{g - 1 - \ell} \rightarrow b_{g - 1 - \ell} \rightarrow 0
\end{equation}
where we abusingly write the dimensions, rather than the cohomology spaces themselves.

\begin{remark} It follows that $$b_\ell + c_\ell - c_{g - 1 - \ell} - b_{g - 1 - \ell} = a_\ell - a_{g - 1 - \ell}.$$ 
The right hand side is known to be equal to 
\[ \binom{g - 1}{\ell - 1} \frac{(g - 1 - \ell)(g - 1 - 2\ell)}{\ell+1} \]
using the Hilbert polynomial of the canonical curve $C$. 
This formula also follows from \cite[Lem.\,1.3]{previouspaper}, by using instead the left hand side of the equality.
\end{remark}

\section{Gorenstein weak Fano toric surfaces} \label{section_gorensteinweakfano}

As before let $\Delta$ be a lattice polygon with two-dimensional interior $\Delta^{(1)}$. Let $\Sigma_\Delta$ denote the inner normal fan
to $\Delta$, and as in the proof of Lemma~\ref{fujitalemma} let $U(\Sigma_\Delta)$ be the set of primitive generators of its rays. The prime divisor associated
with $v \in U(\Sigma_\Delta)$ will again be denoted by $D_v$. For reasons 
that will become apparent in the next section, we are interested in situations where
the polygon $P_{-K_\Delta}$ associated with
the anticanonical divisor 
\[ -K_\Delta=\sum_{v\in U(\Sigma)}\,D_v \] 
on $X_\Delta$ is a lattice polygon. Using the criteria in \cite[Chapter 6]{coxlittleschenck} one sees that this holds if and only if $-K_\Delta$ is base-point free (i.e., nef) and Cartier. Since $-K_\Delta$ is always
big, we conclude that we are actually interested in the cases where $X_\Delta$ is a so-called \emph{Gorenstein weak Fano} toric surface.

Note that $P_{-K_{\Delta}}$ has one interior lattice point only, namely, the origin; therefore, in the Gorenstein weak Fano case it is a reflexive polygon.
Its dual polygon is the convex hull of the vectors $v \in U(\Sigma_\Delta)$, which is then also reflexive. It is not hard to
see that the argument works in both ways, i.e., a toric surface is Gorenstein weak Fano if and only if the convex hull of the primitive generators of
the rays of its fan is a reflexive polygon. 
Up to unimodular equivalence, there are $16$ reflexive polygons 
\cite[Prop.\,4.1]{nill}, so a toric surface is Gorenstein weak Fano if and only if its fan is a \emph{coherent crepant refinement} of the inner normal fan to one of
these $16$ polygons. That is, it is obtained by inserting a number of rays (possibly none) that
pass through a lattice point on the boundary of the dual polygon.
\begin{figure}[h]
\centering
\begin{tikzpicture}[scale=0.4]
  \draw[thick] (-12,0) -- (-13,1) -- (-14,-1) -- (-13,-1) -- (-12,0);
  \draw[fill] (-13,0) circle [radius=0.15];
  \draw[fill] (-12,0) circle [radius=0.15];
  \draw[fill] (-13,1) circle [radius=0.15];
  \draw[fill] (-14,-1) circle [radius=0.15];  
  \draw[fill] (-13,-1) circle [radius=0.15];  
  \draw[->] plot [smooth] coordinates {(-10,0) (-7,0.3) (-4,0)};
  \node at (-7,1){inner normal fan};
  \draw (0,0) -- (3,-1.5);
  \draw (0,0) -- (0,3);
  \draw (0,0) -- (-2,2);
  \draw (0,0) -- (-2,-2);
  \draw[dotted] (0,0) -- (2.5,0);
  \draw[dotted] (0,0) -- (-2.5,0);
  \draw[dotted] (0,0) -- (0,-2.5);
  \draw[dotted] (0,0) -- (2,-2);
  \draw[fill] (0,-1) circle [radius=0.15];
  \draw[fill] (1,-1) circle [radius=0.15];
  \draw[fill] (2,-1) circle [radius=0.15];
  \draw[fill] (1,0) circle [radius=0.15];
  \draw[fill] (0,1) circle [radius=0.15];
  \draw[fill] (-1,1) circle [radius=0.15];
  \draw[fill] (-1,0) circle [radius=0.15];
  \draw[fill] (-1,-1) circle [radius=0.15];
  \draw[thick] (-1,-1) -- (2,-1) -- (0,1) -- (-1,1) -- (-1,-1);
\end{tikzpicture}
\caption{Combinatorial characterization of the Gorenstein weak Fano property}
\label{combcharac}
\end{figure}
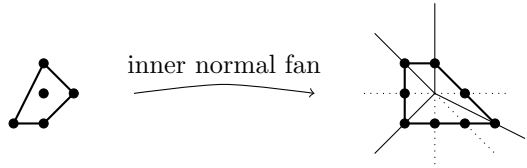
A similar criterion was proven to hold in any dimension by Nill~\cite[Prop.\,1.7]{nill}, to whom's paper we refer for more background.

The aim of the current section is to show that the Gorenstein weak Fano property enjoys a certain robustness.
\begin{lemma} \label{gwfrobustminimal}
  If $X_\Delta$ is Gorenstein weak Fano and $X \rightarrow X_\Delta$ is the minimal toric resolution of singularities, then
  also $X$ is Gorenstein weak Fano, and moreover  $P_{-K} = P_{-K_\Delta}$. 
\end{lemma}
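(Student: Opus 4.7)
The plan is to reduce everything to a local fact about each cone of $\Sigma_\Delta$, namely that the rays inserted by the minimal resolution all have primitive generators lying on the chord connecting the two existing generators. Once this is established, the equality $P_{-K} = P_{-K_\Delta}$ becomes a short convexity argument and the Gorenstein weak Fano property of $X$ drops out by applying \cite[Prop.\,6.1.1]{coxlittleschenck} cone by cone.

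For the local step, I would exploit that $-K_\Delta$ is Cartier (one of the equivalent reformulations of the Gorenstein weak Fano hypothesis), so for every maximal cone $\sigma = \langle v', w'\rangle$ of $\Sigma_\Delta$ there exists $m_\sigma \in \ZZ^2$ with $\langle m_\sigma, v'\rangle = \langle m_\sigma, w'\rangle = -1$. In particular $v'$ and $w'$ both lie on the affine line $\ell_\sigma = \{u \in \RR^2 : \langle m_\sigma, u\rangle = -1\}$. Using a unimodular transformation I may assume $v' = (1,0)$ and $m_\sigma = (-1, 0)$, which forces $w' = (1, b)$ for some integer $b \geq 1$. Then the lattice points $(1,0),(1,1),\ldots,(1,b)$ on the chord $[v',w']$ have consecutive pairs of determinant $1$ and non-consecutive pairs of determinant at least $2$; hence inserting the rays generated by $(1,1),\ldots,(1,b-1)$ produces a smooth refinement of $\sigma$ from which no ray can be removed. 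By the uniqueness of the minimal toric resolution of a two-dimensional cyclic quotient singularity, these are exactly the rays inserted by $X \to X_\Delta$ inside $\sigma$; in particular every newly inserted primitive generator lies on $\ell_\sigma$.

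Given this local description, $P_{-K} \subseteq P_{-K_\Delta}$ is immediate from $U(\Sigma_\Delta) \subseteq U(\Sigma)$ and the half-plane descriptions of the two polygons. Conversely, any $v \in U(\Sigma) \setminus U(\Sigma_\Delta)$ lies on the chord between two consecutive $v', w' \in U(\Sigma_\Delta)$, so $v = \lambda v' + (1-\lambda)w'$ for some $\lambda \in (0,1)$, and for $x \in P_{-K_\Delta}$ one has
\[ \langle x, v\rangle = \lambda\langle x, v'\rangle + (1-\lambda)\langle x, w'\rangle \geq -1, \]
so $x \in H(v,1)$ and hence $P_{-K_\Delta} \subseteq P_{-K}$. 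Finally, to conclude that $X$ is Gorenstein weak Fano, Cartierness of $-K$ is automatic since $X$ is smooth, bigness follows from $P_{-K} = P_{-K_\Delta}$ being two-dimensional, and nefness follows from \cite[Prop.\,6.1.1]{coxlittleschenck}: the vertex $m_{\sigma'}$ attached to any maximal cone $\sigma'$ of $\Sigma$ refining a cone $\sigma$ of $\Sigma_\Delta$ coincides with $m_\sigma$ (the two defining equations are the same, since the rays of $\sigma'$ lie on $\ell_\sigma$), and $m_\sigma \in P_{-K_\Delta} = P_{-K}$. The single nontrivial step is the local claim in the second paragraph, which is the combinatorial reflection of the classical fact that two-dimensional Gorenstein toric singularities are of type $A_{n-1}$.
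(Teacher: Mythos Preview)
Your proof is correct and establishes the same key fact as the paper's argument --- namely that every ray inserted by the minimal resolution has its primitive generator on the chord between the two adjacent generators in $U(\Sigma_\Delta)$ --- but the way you reach it is rather different.

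The paper argues globally via the reflexive-polygon characterisation from Section~\ref{section_gorensteinweakfano}: since $X_\Delta$ is Gorenstein weak Fano, $Q := \conv U(\Sigma_\Delta)$ is reflexive, and inserting a ray through \emph{every} boundary lattice point of $Q$ already gives a smooth fan. Any resolution of $X_\Delta$ dominates the minimal one, so the fan $\Sigma$ of $X$ must arise by inserting a subset of these boundary-lattice-point rays; in particular $\Sigma$ is again a coherent crepant refinement and the two claims follow at once. Your route is instead local and explicit: you use Cartierness of $-K_\Delta$ to normalise each cone to $\langle (1,0),(1,b)\rangle$, identify the minimal resolution as the $A_{b-1}$ chain $(1,1),\dots,(1,b-1)$, and then verify $P_{-K}=P_{-K_\Delta}$ and nefness by hand via \cite[Prop.\,6.1.1]{coxlittleschenck}. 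Your approach has the advantage of being entirely self-contained (no appeal to the universal property of minimal resolutions of surfaces, nor to the reflexive-polygon classification), while the paper's is shorter once that background is in place; conceptually they amount to the same thing, since the edges of the reflexive polygon $Q$ are exactly your lines $\ell_\sigma$.
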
 
\noindent Here, as in the previous section, $K$ denotes the canonical divisor on $X$ obtained by taking minus the sum of all torus-invariant prime divisors.

\begin{proof}
Consider the maximal coherent crepant refinement of $\Sigma_\Delta$, obtained
by inserting a ray for \emph{each} lattice point on the boundary of the reflexive polygon obtained by taking the 
convex hull of $U(\Sigma)$. 
This clearly gives a resolution of singularities. Therefore
the fan $\Sigma$ of $X$ must be obtained from $\Sigma_\Delta$ by inserting a number of these rays (possibly none, possibly all). We conclude that $\Sigma$
is also a coherent crepant refinement of $\Sigma_\Delta$, and both claims follow.
\end{proof}

For our second robustness statement, we need the following notation.
Given a lattice polygon $\Delta$ with two-dimensional interior lattice polygon $\Delta^{(1)}$, then $\Delta^\text{max}$ is defined as the maximal lattice polygon $\Gamma$ (with respect to inclusion) satisfying $\Gamma^{(1)}=\Delta^{(1)}$. The polygon $\Delta^\text{max}$ can be obtained from $\Delta^{(1)}$ by moving out its edges over an integral distance $1$. Therefore each edge of $\Delta^\text{max}$ is parallel to an edge of $\Delta^{(1)}$, although the converse may fail, because it could happen
that an edge shrinks to length $0$. See 
\cite[\S2]{linearpencils} and the references therein for more background.

\begin{lemma} \label{gwfrobust}
If $X_\Delta$ is Gorenstein weak Fano, then
also $X_{\Delta^{(1)}}$ is Gorenstein weak Fano. Moreover, the latter property holds if and only if $X_{\Delta^\emph{max}}$ is Gorenstein weak Fano,
and in this case the normal fans to $\Delta^{(1)}$ and $\Delta^\emph{max}$ are the same.
\end{lemma}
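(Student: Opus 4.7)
The combinatorial criterion established above identifies ``$X_P$ Gorenstein weak Fano'' with ``$\conv U(\Sigma_P)$ reflexive''. Because passing from $\Delta$ to $\Delta^{(1)}$ (and from $\Delta^{(1)}$ to $\Delta^{\max}$) can only collapse edges, never create new ones, the chain of ray-set inclusions $U(\Sigma_{\Delta^{\max}}) \subseteq U(\Sigma_{\Delta^{(1)}}) \subseteq U(\Sigma_\Delta)$ holds. My plan is to prove the first assertion via the minimal toric resolution (Lemma~\ref{gwfrobustminimal}) and then handle both parts of the second assertion together using Minkowski sums.

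For the first assertion, I would invoke Lemma~\ref{gwfrobustminimal} to replace $X_\Delta$ by its minimal toric resolution $X$, which is smooth, Gorenstein weak Fano, and shares the reflexive polygon $R = \conv U(\Sigma_\Delta)$ with $X_\Delta$. By Lemma~\ref{fujitalemma}, the divisor $L = D_f + K$ is base-point-free on $X$ with $P_L = \Delta^{(1)}$, and the associated morphism factors through $X \to X_{\Delta^{(1)}}$. Its contracted curves correspond precisely to the rays of $\Sigma_X$ absent from $\Sigma_{\Delta^{(1)}}$. At each such dropped ray $v$ — possibly a vertex of $R$ — a local computation with the Cartier vectors of $-K$ at the two neighbouring cones of $\Sigma_X$, both integral by hypothesis, produces an integral Cartier vector for the merged cone in $\Sigma_{\Delta^{(1)}}$. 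Hence every vertex of $P_{-K_{\Delta^{(1)}}}$ is a lattice point and $\conv U(\Sigma_{\Delta^{(1)}})$ is still reflexive.

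For the equivalence of GwF for $X_{\Delta^{(1)}}$ and $X_{\Delta^{\max}}$, together with the identity of normal fans, I will use Minkowski sums. Because $\Delta^{(1)}$ and $P_{-K_{\Delta^{(1)}}}=\bigcap_{v\in U(\Sigma_{\Delta^{(1)}})} H(v,1)$ are described with respect to the same set of ray directions, and each edge of $\Delta^{(1)}$ has strictly positive lattice length, one obtains
\[ \Delta^{(1)}+P_{-K_{\Delta^{(1)}}}=\bigcap_{v\in U(\Sigma_{\Delta^{(1)}})} H(v,a_v+1), \]
whose normal fan is exactly $\Sigma_{\Delta^{(1)}}$. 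Assuming $X_{\Delta^{(1)}}$ is GwF, the polygon $P_{-K_{\Delta^{(1)}}}$ is a lattice polygon, and hence so is this Minkowski sum; its interior polygon is visibly $\Delta^{(1)}$, so by the maximality defining $\Delta^{\max}$ the two must coincide, delivering simultaneously the normal-fan identity and $X_{\Delta^{\max}} = X_{\Delta^{(1)}}$ (in particular GwF). The reverse implication — that $X_{\Delta^{\max}}$ GwF forces $X_{\Delta^{(1)}}$ GwF — follows by applying the first assertion to $\Delta^{\max}$ in place of $\Delta$, using $(\Delta^{\max})^{(1)}=\Delta^{(1)}$.

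The main obstacle is the local analysis in the first assertion: one must show that discarding a ray $v$ of $\Sigma_X$ cannot break the reflexivity of $\conv U(\Sigma_X)$. This amounts to verifying integrality of the Cartier vector attached to the merged cone, which can be argued either by exploiting the linear relation among $v$, its two neighbours and the two existing Cartier vectors forced by the \emph{edge shrinks} condition, or, failing that, by a finite case check against Nill's classification of the sixteen reflexive lattice polygons and their coherent crepant refinements.
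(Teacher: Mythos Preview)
Your strategy is sound and genuinely different from the paper's, and in places more economical. The paper proceeds in the opposite order: it first shows the equivalence between $X_{\Delta^{(1)}}$ and $X_{\Delta^{\max}}$ being Gorenstein weak Fano (the ``only if'' via the blow-down observation, the ``if'' by a contradiction argument ruling out disappearing edges of $\Delta^{(1)}$), and only then deduces the first assertion by a lengthy case analysis showing that the clipped-off vertices of $\Delta^{\max}$ can be glued back while preserving the GwF property. Your route through the minimal resolution and Lemma~\ref{fujitalemma} bypasses that case analysis entirely, since $L$ nef on $X$ forces $\Sigma_{\Delta^{(1)}}$ to coarsen $\Sigma_X$, after which the blow-down observation finishes the job. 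Likewise, your Minkowski-sum argument for the identity of normal fans is cleaner than the paper's contradiction.

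Two points need tightening. First, the identity
\[
\Delta^{(1)} + P_{-K_{\Delta^{(1)}}} \;=\; \bigcap_{v \in U(\Sigma_{\Delta^{(1)}})} H(v, a_v + 1)
\]
does \emph{not} follow from ``same ray directions and positive edge lengths of $\Delta^{(1)}$''. One needs each half-plane $H(v,1)$ to actually support $P_{-K_{\Delta^{(1)}}}$, which is exactly the nef-ness of $-K_{\Delta^{(1)}}$ --- i.e.\ part of the GwF hypothesis you invoke one sentence later. (If $P_{-K}$ degenerates to a point the identity visibly fails.) So move the GwF assumption before the displayed identity. Second, what you flag as the ``main obstacle'' is in fact the easiest step: if the primitive generators span a reflexive polygon $R$, then after dropping rays the remaining generators span a lattice sub-polygon of $R$ whose sole interior lattice point is still the origin, and in dimension two this already forces reflexivity. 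No Cartier-vector computation or case check against Nill's list is required; the paper dispatches it in one sentence. Also, your opening chain $U(\Sigma_{\Delta^{(1)}}) \subseteq U(\Sigma_\Delta)$ is not obvious a priori and is not what your argument actually uses; you only need $U(\Sigma_{\Delta^{(1)}}) \subseteq U(\Sigma_X)$, which Lemma~\ref{fujitalemma} supplies.
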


\begin{proof}
We will rely on the following observation: let $X$ be a Gorenstein weak Fano projective toric surface, and let $X'$ be a toric blow-down of $X$, i.e., the toric surface obtained
by removing a certain number of rays from the fan defining $X$. Then $X'$ is also Gorenstein weak Fano. Indeed, if the primitive generators of the rays of a fan span a reflexive polygon, 
then this remains true after dropping some of these rays.

We first prove the last equivalence, namely that $X_{\Delta^{(1)}}$ is Gorenstein weak Fano if and only if the same is true for 
$X_{\Delta^\text{max}}$. As noted above, the inner normal fan to $\Delta^{(1)}$ is a subdivision of the inner normal fan to $\Delta^\text{max}$, which by the foregoing observation implies the `only if' part of the statement. As for the `if' part, assume that $\Delta^\text{max}$ is Gorenstein weak Fano. We will show that the subdivision is in fact trivial, i.e., the normal fans
to $\Delta^{(1)}$ and $\Delta^\text{max}$ are the same, from which the desired conclusion follows. Indeed, suppose that there is an edge 
$\tau \subseteq \Delta^{(1)}$ that disappears after moving out the edges, i.e., its length shrinks to $0$, and choose it
such that there is an adjacent edge $\tau'$ that does not disappear. Let $v$ be the vertex common to $\tau$ and $\tau'$. 
Using a unimodular transformation if needed we can assume that $\tau$ is supported on the line $y = 0$, that $v = (0,0)$, and that the next lattice point on $\tau'$ is $(-b,a)$ with $a \geq b \geq 1$.
The outward shifts of the supporting lines of $\tau$ and $\tau'$ meet in the point 
\[ w = \left( \frac{b-1}{a}, -1 \right), \]
which is necessarily a lattice point, hence $b=1$ and $w=(0,-1)$. Now let $\tau''$ be the first non-disappearing edge at the other side of $\tau$; note that it might a priori not be adjacent to it. Denote
its primitive inner normal vector by $(c,d)$, so that its supporting line is of the form $cx + dy = e$. Notice that $c \leq -1$ by convexity of $\Delta^{(1)}$, and moreover $e \leq c$ because $(1,0)$ is contained in the corresponding half-plane.
Now the outward shift of this line (defined by $cx + dy = e-1$) must also pass through $w$, leading to the identity
\[ d = - e + 1 > 1. \] 
This contradicts the being Gorenstein weak Fano of $\Delta^\text{max}$, because the convex hull of $(a,b)$, $(c,d)$ and the other primitive generators of the rays of its normal fan contains $(0,1)$ as an interior point.

As for the first implication, note that $\Delta$ is obtained from $\Delta^\text{max}$ by clipping off a number of vertices. We show that these vertices
can be glued back on, one by one, while preserving the Gorenstein weak Fano property. Remark that a vertex can only be clipped off if it is `smooth', meaning that a unimodular
transformation takes it to $(0,0)$ with the adjacent edges lining up with the coordinate axes: otherwise $\Delta^{(1)}$ would be affected. Up to changing the order of the coordinates,
the clipping then necessarily happens along the segment connecting $(0,1)$ and $(a,0)$ for some $a \geq 0$. We make a case distinction between
three removal types.
\begin{itemize}
  \item Type $1$: none of the adjacent edges was removed completely. This means that glueing back the vertex boils down to dropping a ray from the inner normal fan, which
  preserves the Gorenstein weak Fano property.
  \item Type $2$: exactly one of the adjacent edges was removed completely. Then the situation is among the ones depicted in Figure~\ref{edgeremovedcompletely}.
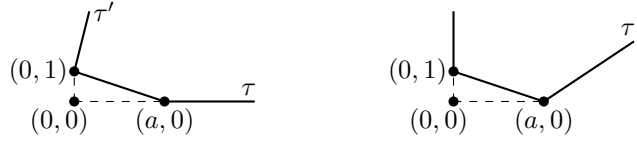
\begin{figure}[h]
\centering
\begin{tikzpicture}[scale=0.4]
  \draw[dashed] (0,0) -- (3,0);
  \draw[thick] (3,0) -- (6,0);
  \draw[dashed] (0,0) -- (0,1);
  \draw[thick] (0,1) -- (3,0);
  \draw[thick] (0,1) -- (0.5,3);
  \node at (-0.5,-0.65) {$(0,0)$};
  \node at (-1.2,1) {$(0,1)$};
  \node at (3,-0.65) {$(a,0)$};
  \draw[fill] (0,1) circle [radius=0.15];
  \draw[fill] (0,0) circle [radius=0.15];
  \draw[fill] (3,0) circle [radius=0.15];  
  \node at (5.8,0.4) {$\tau$};
  \node at (1,3) {$\tau'$};
\end{tikzpicture}
\qquad \qquad
\begin{tikzpicture}[scale=0.4]
  \draw[dashed] (0,0) -- (3,0);
  \draw[thick] (3,0) -- (6,2);
  \draw[dashed] (0,0) -- (0,1);
  \draw[thick] (0,1) -- (3,0);
  \draw[thick] (0,1) -- (0,3);
  \node at (-0.5,-0.65) {$(0,0)$};
  \node at (-1.2,1) {$(0,1)$};
  \node at (3,-0.65) {$(a,0)$};
  \draw[fill] (0,1) circle [radius=0.15];
  \draw[fill] (0,0) circle [radius=0.15];
  \draw[fill] (3,0) circle [radius=0.15];  
  \node at (5.8,2.4) {$\tau$};
\end{tikzpicture}
\caption{The cases where exactly one edge is removed completely}
\label{edgeremovedcompletely}
\end{figure}
 One of the primitive generators of the rays of the inner normal fan to $\Delta$ is given by $(1,a)$. 
 
  In the first case, the primitive normal vector to $\tau'$ is of the form $(b,c)$ for some $c < 0$ and $b \geq 1$, where the latter inequality holds
  because $\tau'$ cannot be horizontal (otherwise
  $\Delta$ would have an empty interior). This means that $(1,0)$ belongs to the polygon
  spanned by the primitive generators, and therefore it stays reflexive upon replacement of $(1,a)$ by $(1,0)$, i.e., the Gorenstein weak Fano
  property is preserved when glueing back our vertex. (This reasoning shows that in fact $b=1$, because otherwise $(1,0)$ would be contained in the interior of the polygon spanned by the primitive generators, thereby violating that $X_\Delta$ is Gorenstein weak Fano.)
    
  In the second case we find $(1,0)$ among
  the primitive generators of the rays of the inner normal fan. If $a>2$ then by the Gorenstein weak Fano property all other primitive generators must belong to the triangle shown in Figure~\ref{triangularregion},
  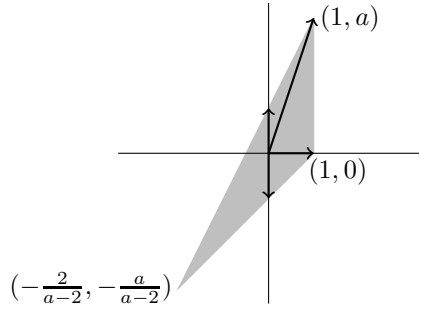
\begin{figure}[h]
\centering
\begin{tikzpicture}[scale=0.4]
  \draw[lightgray, fill=lightgray] (1.5,0) -- (1.5,4.5) -- (-3,-4.5) -- (1.5,0); 
  \draw[thin] (-5,0) -- (5,0);
  \draw[thin] (0,-5) -- (0,5);
  \draw[thick,->] (0,0) -- (1.5,0);
  \draw[thick,->] (0,0) -- (1.5,4.5);
  \draw[thick,->] (0,0) -- (0,1.5);
  \draw[thick,->] (0,0) -- (0,-1.5);
  \node at (2.3,-0.6) {$(1,0)$};
  \node at (2.7,4.5) {$(1,a)$};
  \node at (-5.9,-4.5) {$(-\frac{2}{a-2},-\frac{a}{a-2})$};
\end{tikzpicture}
\caption{Allowed region for the primitive generators in case $a > 2$}
\label{triangularregion}
\end{figure}
  because otherwise either $(0,1)$ or $(0,-1)$ would belong to the interior of the polygon they span. 
  If $a > 4$ then $2/(a-2) < 1$, so the triangular region cannot contain the primitive normal vector to $\tau$, which has to have a strictly negative first coordinate: a contradiction.
  If $a = 4$ then the primitive normal vector to $\tau$ is necessarily $(-1,-2)$, which gives a contradiction with the fact that $\Delta^{(1)}$ is two-dimensional. If $a=3$ one finds $(-1,-1)$, $(-1,-2)$ or $(-2,-3)$, each of which cases again yields a contradiction with the two-dimensionality of $\Delta^{(1)}$.
  
  If $a = 2$ then the region becomes the half-strip shown in Figure~\ref{halfstrip}, 
  \begin{figure}[h]
\centering
\begin{tikzpicture}[scale=0.4]
  \draw[lightgray, fill=lightgray] (1.5,0) -- (1.5,3) -- (-4.5,-3) -- (-1.5,-3) -- (1.5,0); 
  \draw[thin] (-5,0) -- (5,0);
  \draw[thin] (0,-3) -- (0,4);
  \draw[thick,->] (0,0) -- (1.5,0);
  \draw[thick,->] (0,0) -- (1.5,3);
  \draw[thick,->] (0,0) -- (0,1.5);
  \draw[thick,->] (0,0) -- (0,-1.5);
  \node at (2.3,-0.6) {$(1,0)$};
  \node at (2.7,3) {$(1,2)$};
\end{tikzpicture}
\caption{Allowed region for the primitive generators in case $a=2$}
\label{halfstrip}
\end{figure}
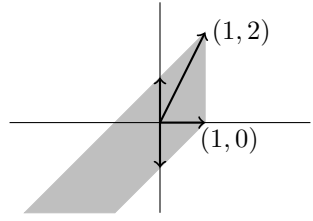
  where now the conclusion reads that the primitive normal vector to $\tau$ has a negative second coordinate (possibly zero): this means that $\Delta$ is contained in a vertical strip of width $2$, once again
  contradicting the fact that
  $\Delta^{(1)}$ is two-dimensional. Thus we conclude that $a = 1$, and a similar reasoning shows that the primitive normal vector to $\tau$ must be 
of the form $(b,c)$ for some $b < 0$ and $c \leq 1$. If $c < 1$ then we again run into a contradiction with the two-dimensionality of $\Delta^{(1)}$. Therefore $c=1$, but  
  this means that the polygon spanned by the primitive normal vectors contains $(0,1)$, and therefore the Gorenstein weak Fano property is preserved upon replacement of $(1,a) = (1,1)$ by $(0,1)$, i.e., upon
  glueing back our vertex.  
  \item Type $3$: the two adjacent edges are removed completely. Then the situation must be as depicted in Figure~\ref{twoedgesremovedcompletely}.
  \begin{figure}[h]
\centering
\begin{tikzpicture}[scale=0.4]
  \draw[dashed] (0,0) -- (3,0);
  \draw[thick] (3,0) -- (6,2);
  \draw[dashed] (0,0) -- (0,1);
  \draw[thick] (0,1) -- (3,0);
  \draw[thick] (0,1) -- (0.5,3);
  \node at (-0.5,-0.65) {$(0,0)$};
  \node at (-1.2,1) {$(0,1)$};
  \node at (3,-0.65) {$(a,0)$};
  \draw[fill] (0,1) circle [radius=0.15];
  \draw[fill] (0,0) circle [radius=0.15];
  \draw[fill] (3,0) circle [radius=0.15];  
  \node at (5.8,2.4) {$\tau$};
  \node at (1,3) {$\tau'$};
\end{tikzpicture}
\caption{The case where both edges are removed completely}
\label{twoedgesremovedcompletely}
\end{figure}
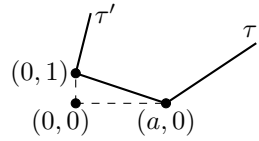
  This is very similar to before. In the cases where $a \geq 2$ one again obtains a contradiction, either with
  the Gorenstein weak Fano property or with the two-dimensionality of $\Delta^{(1)}$: the region in which the primitive normal vector to $\tau$ should be contained
  becomes even smaller. If $a=1$ then we find that the primitive normal vectors
  to $\tau$ and $\tau'$ are $(1,b)$ resp.\ $(b',1)$ for integers $b,b' < 0$, so we can replace $(1,1)$ by the pair $(0,1), (1,0)$, i.e., we can glue back our vertex.
\end{itemize}
This concludes the proof.
\end{proof}

One corollary is that, in the statement of Theorem~\ref{maintheorem}, the condition that $X_{\Delta^{(1)}}$ is Gorenstein weak
Fano can be replaced by $X_\Delta$ being Gorenstein weak Fano, although the resulting theorem is weaker.

\section{Constancy results} \label{section_constancyresults}


In this section we prove Theorem~\ref{maintheorem}. As before let $\Delta$ be a lattice polygon with two-dimensional interior $\Delta^{(1)}$, let
$f$ be as in \eqref{eq_Laurentpoly} and assume
that $C_f$ is a smooth hyperplane section of $X_\Delta$. 
We copy the set-up and notation from Section~\ref{sectionsixterms}, which we extend by writing
$V_\Gamma$ for the subspace of $k[x^{\pm 1}, y^{\pm 1}]$ consisting of those Laurent polynomials that are supported $\Gamma$, for any given lattice polygon $\Gamma$. Then \[ H^0(X,qL) = V_{q\Delta^{(1)}} \qquad \text{and} \qquad H^0(X,qL + K) = V_{(q\Delta^{(1)})^{(1)}} \] 
for all $q \geq 1$.

From~\eqref{sixterms} we obtain the following:

\begin{lemma} \label{whendoessumholdlemma}
  Letting $\Delta$ and $f \in k[x^{\pm 1}, y^{\pm 1}]$ be as above, for each $\ell=0,1, \ldots, g - 2$ the following assertions are equivalent:
  \begin{itemize}
    \item $a_\ell = b_\ell + c_\ell$, 
    \item $a_{g - 1 - \ell} = b_{g - 1 - \ell} + c_{g - 1 - \ell}$,
    \item $K_{\ell-1,1}(X;K,L) \stackrel{\mu_f}{\longrightarrow} K_{\ell-1,2}(X,L)$
  is the zero map.
  \end{itemize}
  Here $X, K, L$ and $a_0, a_1, \ldots, a_{g-2}$, $b_0, b_1, \ldots, b_{g-2}$, $c_0, c_1, \ldots, c_{g-2}$ are obtained from $\Delta$ and $f$ as in Section~\ref{sectionsixterms}.
\end{lemma}
\noindent Recall that $\mu_f$ denotes multiplication by $f$. Explicitly, this is the map induced by the vertical
maps (also denoted by $\mu_f$) of the commutative diagram
\[ \begin{array}{ccccc} 
  &  & \bigwedge^{\ell-1} V_{\Delta^{(1)}} \otimes V_{\Delta^{(2)}} 
& \stackrel{\delta}{\longrightarrow} & \bigwedge^{\ell-2} V_{\Delta^{(1)}} \otimes V_{(2\Delta^{(1)})^{(1)}} \\
  & & \phantom{\mu_f} \downarrow \mu_f & & \phantom{\mu_f} \downarrow \mu_f \\
\bigwedge^\ell V_{\Delta^{(1)}} \otimes V_{\Delta^{(1)}} & \stackrel{\delta}{\longrightarrow} & \bigwedge^{\ell-1} V_{\Delta^{(1)}} \otimes V_{2\Delta^{(1)}} 
& \stackrel{\delta}{\longrightarrow} & \bigwedge^{\ell-2} V_{\Delta^{(1)}} \otimes V_{3\Delta^{(1)}}, \end{array}   \]
where the
$\delta$'s are the usual boundary morphisms
\[ v_1 \wedge v_2 \wedge v_3 \wedge v_4 \wedge \dots \otimes w \, \mapsto \, \sum_s (-1)^s v_1 \wedge v_2 \wedge v_3 \wedge v_4 \wedge \dots \wedge \widehat{v_s} \wedge \dots 
\otimes v_s w \]
($\widehat{v_s}$ means that $v_s$ is being omitted) and 
the $\mu_f$'s act like
\[
 v_1 \wedge v_2 \wedge v_3 \wedge v_4 \wedge \dots \otimes w \, \mapsto \, v_1 \wedge v_2 \wedge v_3 \wedge v_4 \wedge \dots \otimes fw,
\]
where $fw$ indeed ends up in the target space because $\Delta + \Delta^{(2)} \subseteq 2\Delta^{(1)}$ and $\Delta + (2\Delta^{(1)})^{(1)} \subseteq 3\Delta^{(1)}$.

Then indeed
$K_{\ell-1,1}(X;K,L)$ is the kernel of the top row while
$K_{\ell-1,2}(X,L)$ is the cohomology in the middle of the bottom row. In view of Lemma~\ref{whendoessumholdlemma}, our aim is to find conditions under which $\mu_f = 0$ on the level of cohomology.
It is convenient to introduce a multiplication map for each
monomial $x^iy^j$ that is supported on $\Delta$. That is, for each $(i,j) \in \Delta \cap \mathbb{Z}^2$ we consider the morphism 
$\mu_{i,j} :  K_{\ell-1,1}(X;K,L) \rightarrow K_{\ell-1,2}(X,L)$ that is induced by
\[
 v_1 \wedge v_2 \wedge v_3 \wedge v_4 \wedge \dots \otimes w \, \mapsto \, v_1 \wedge v_2 \wedge v_3 \wedge v_4 \wedge \dots \otimes x^iy^jw. 
\]
Note that
\[ \mu_f = \sum_{(i,j) \in \Delta \cap \mathbb{Z}^2} c_{i,j} \mu_{i,j}. \]
In fact we even have
\begin{equation} \label{onlycoefficientsontheboundary}
 \mu_f = \sum_{(i,j) \in \partial \Delta \cap \mathbb{Z}^2} c_{i,j} \mu_{i,j}
\end{equation}
thanks to the following observation:
\begin{lemma} \label{interiordoesnotcontribute}
  If $(i,j) \in \Delta^{(1)}$ then $\mu_{i,j} = 0$ on the level of cohomology.
\end{lemma}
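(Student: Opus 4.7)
The plan is to apply a standard Koszul cohomology trick: since the multiplier $x^iy^j$ lies in the degree-one part $V_{\Delta^{(1)}}$ of the symmetric algebra $S^*V_{\Delta^{(1)}}$, multiplication by it is null-homotopic on Koszul cohomology, essentially because we can absorb it into the wedge factor. The crucial role of the hypothesis $(i,j) \in \Delta^{(1)}$ is precisely to guarantee that $x^iy^j \in V_{\Delta^{(1)}}$; this is what fails for monomials supported only on $\partial\Delta \setminus \Delta^{(1)}$, and is why formula \eqref{onlycoefficientsontheboundary} effectively only sees the boundary coefficients.

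To make this concrete, I would start by observing that $\Delta^{(2)} \subseteq \Delta^{(1)}$, so the inclusion of lattice points induces a natural embedding $V_{\Delta^{(2)}} \hookrightarrow V_{\Delta^{(1)}}$. Consequently, any cocycle $\alpha \in \bigwedge^{\ell-1} V_{\Delta^{(1)}} \otimes V_{\Delta^{(2)}}$ representing a class in $K_{\ell-1,1}(X;K,L)$ can be viewed as an element of $\bigwedge^{\ell-1} V_{\Delta^{(1)}} \otimes V_{\Delta^{(1)}}$. Setting $v := x^iy^j \in V_{\Delta^{(1)}}$, the natural thing to form is
\[ \beta \;:=\; v \wedge \alpha \;\in\; \bigwedge^{\ell} V_{\Delta^{(1)}} \otimes V_{\Delta^{(1)}}, \]
which sits at the upper-left corner of the middle row of the diagram in the excerpt above.

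Next I would apply the Koszul differential $\delta$ to $\beta$ and unfold using its explicit formula. A direct sign-chase produces the standard identity
\[ \delta(v \wedge \alpha) \;=\; \pm\, \mu_{i,j}(\alpha) \;\pm\; v \wedge \delta(\alpha), \]
where the first term on the right captures removing $v$ from the wedge (and hence multiplying $w$ by $x^iy^j$ in the second tensor factor), while the second term groups the contributions of removing any of the other wedge factors. Since $\alpha$ is a cocycle, $\delta(\alpha) = 0$, and therefore $\mu_{i,j}(\alpha) = \pm\,\delta(\beta)$ lies in the image of $\delta$ coming from $\bigwedge^{\ell} V_{\Delta^{(1)}} \otimes V_{\Delta^{(1)}}$, so it vanishes in $K_{\ell-1,2}(X,L)$. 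There is no serious obstacle here: the argument is a one-line formal identity, and the only mild subtlety is keeping track of signs in the Koszul differential, which does not affect the conclusion.
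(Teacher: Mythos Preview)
Your proof is correct and is essentially the same argument as the paper's: both form $\beta = -x^iy^j \wedge \alpha \in \bigwedge^{\ell} V_{\Delta^{(1)}} \otimes V_{\Delta^{(1)}}$ (using $(i,j)\in\Delta^{(1)}$ so that $x^iy^j\in V_{\Delta^{(1)}}$, and $V_{\Delta^{(2)}}\subseteq V_{\Delta^{(1)}}$ so that $w_r\in V_{\Delta^{(1)}}$) and verify that $\delta(\beta)=\mu_{i,j}(\alpha)$ because $\delta(\alpha)=0$. The paper writes out the explicit preimage with the correct sign rather than stating the homotopy identity abstractly, but the content is identical.
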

\begin{proof}
This follows from a well-known type of reasoning; see~\cite[(1.b.11)]{greenkoszul} or~\cite[Lem.\,2.19]{nagelaprodu}. Explicitly, if
\[ \alpha = \sum_r c_r v_{r,1} \wedge v_{r,2} \wedge \dots \wedge v_{r, \ell - 1} \otimes w_r \ \in \ {\bigwedge}^{\ell-1} \ V_{\Delta^{(1)}} \otimes V_{\Delta^{(2)}} \]
is in the kernel of $\delta$, then one verifies that $\mu_{i,j} (\alpha)$ is the coboundary of
\begin{equation} \label{preimagedelta}
 - \sum_r c_r x^iy^j \wedge v_{r,1} \wedge v_{r,2} \wedge \dots \wedge v_{r, \ell - 1} \otimes w_r \ \in \ {\bigwedge}^\ell \ V_{\Delta^{(1)}} \otimes V_{\Delta^{(1)}}
\end{equation}
and therefore vanishes on the level of cohomology.
\end{proof}

The above argument does not work for $(i,j) \in \partial \Delta$ because in that case
$x^i y^j \notin V_{\Delta^{(1)}}$ and therefore (\ref{preimagedelta}) may not be contained in $\bigwedge^\ell V_{\Delta^{(1)}} \otimes V_{\Delta^{(1)}}$.
However, the condition that $(i,j) \in \Delta^{(1)}$ can be relaxed:
\begin{lemma}
  If $(i,j) \in \Delta$ can be written as $(i_1, j_1) + (i_2,j_2)$ such that $(i_1,j_1) \in \Delta^{(1)}$ and
  $(i_2,j_2) + \Delta^{(2)} \subseteq \Delta^{(1)}$, then $\mu_{i,j} = 0$ on the level of cohomology.
\end{lemma}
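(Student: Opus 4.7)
The plan is to adapt the argument of Lemma~\ref{interiordoesnotcontribute} by performing the multiplication by $x^iy^j$ in two stages, corresponding to the factorization $x^iy^j = (x^{i_1}y^{j_1})(x^{i_2}y^{j_2})$. I first multiply by $x^{i_2}y^{j_2}$ on the tensor factor, which legally sends $V_{\Delta^{(2)}}$ into $V_{\Delta^{(1)}}$ by the hypothesis $(i_2,j_2)+\Delta^{(2)}\subseteq\Delta^{(1)}$, and only then wedge $x^{i_1}y^{j_1}\in V_{\Delta^{(1)}}$ on the left, exactly as in~\eqref{preimagedelta}.

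Concretely, given a cycle
\[ \alpha = \sum_r c_r\, v_{r,1}\wedge\cdots\wedge v_{r,\ell-1}\otimes w_r\ \in\ \ker\delta\ \subseteq\ {\bigwedge}^{\ell-1}V_{\Delta^{(1)}}\otimes V_{\Delta^{(2)}}, \]
I set $\tilde w_r := x^{i_2}y^{j_2}w_r\in V_{\Delta^{(1)}}$ and form
\[ \alpha' := \sum_r c_r\, v_{r,1}\wedge\cdots\wedge v_{r,\ell-1}\otimes\tilde w_r\ \in\ {\bigwedge}^{\ell-1}V_{\Delta^{(1)}}\otimes V_{\Delta^{(1)}}. \]
Since multiplication by the fixed monomial $x^{i_2}y^{j_2}$ on the tensor factor commutes with $\delta$ term by term, the identity $\delta(\alpha)=0$ forces $\delta(\alpha')=0$ as well. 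I then propose the preimage
\[ \beta := -\,x^{i_1}y^{j_1}\wedge\alpha'\ \in\ {\bigwedge}^\ell V_{\Delta^{(1)}}\otimes V_{\Delta^{(1)}}, \]
and verify, via the graded Leibniz-type identity $\delta(v\wedge\xi\otimes w) = -\xi\otimes vw - v\wedge\delta(\xi\otimes w)$, that
\[ \delta(\beta) = \mu_{i,j}(\alpha) + x^{i_1}y^{j_1}\wedge\delta(\alpha') = \mu_{i,j}(\alpha), \]
using the equality $x^{i_1}y^{j_1}\tilde w_r = x^iy^jw_r$. Hence $\mu_{i,j}(\alpha)$ is a coboundary.

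I do not expect any genuine obstacle: the argument is a direct generalization of Lemma~\ref{interiordoesnotcontribute}, which is recovered by taking $(i_1,j_1)=(i,j)$ and $(i_2,j_2)=(0,0)$. The only care required is the bookkeeping that every intermediate element stays in the correct Koszul ambient space; this is controlled entirely by the two inclusions $(i_2,j_2)+\Delta^{(2)}\subseteq\Delta^{(1)}$ and $(i_1,j_1)\in\Delta^{(1)}$, which together imply $(i_1,j_1)+(i_2,j_2)+\Delta^{(2)}\subseteq 2\Delta^{(1)}$, so that $\mu_{i,j}(\alpha)$ indeed lives in $\bigwedge^{\ell-1}V_{\Delta^{(1)}}\otimes V_{2\Delta^{(1)}}$.
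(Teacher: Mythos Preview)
Your proposal is correct and follows exactly the same approach as the paper: the preimage you construct, $\beta = -\sum_r c_r\, x^{i_1}y^{j_1}\wedge v_{r,1}\wedge\cdots\wedge v_{r,\ell-1}\otimes x^{i_2}y^{j_2}w_r$, is precisely the element the paper writes down as the replacement for~\eqref{preimagedelta}. Your additional verification that $\delta(\alpha')=0$ and the Leibniz-type computation of $\delta(\beta)$ simply spell out what the paper leaves implicit.
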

\begin{proof}
In the above proof 
\[
 - \sum_r c_r x^{i_1}y^{j_1} \wedge v_{r,1} \wedge v_{r,2} \wedge \dots \wedge v_{r, \ell - 1} \otimes x^{i_2} y^{j_2} w_r
  \ \in \ {\bigwedge}^\ell \ V_{\Delta^{(1)}} \otimes V_{\Delta^{(1)}}
\]
serves as a replacement for (\ref{preimagedelta}).
\end{proof}
\noindent This generalization of Lemma~\ref{interiordoesnotcontribute} can be seen in the context of~\cite[Rem.\ on p.\,134]{greenkoszul}, which hints at the existence of many ways to generalize~\cite[(1.b.11)]{greenkoszul}.

It is natural to try and take $(i_2,j_2) \in P_{-K}$, so that $x^{i_2}y^{j_2} \in H^0(X,-K)$. Indeed recall
that $V_{\Delta^{(2)}} = H^0(X,L + K)$ and $V_{\Delta^{(1)}} = H^0(X,L)$, so in this case we indeed have that $(i_2,j_2) + \Delta^{(2)} \subseteq \Delta^{(1)}$.
Such an appropriate decomposition of $(i,j) \in \Delta$ can be found only if
\begin{equation} \label{intheproduct}
x^iy^j \in \left \{ \, \varphi \psi \, | \, \varphi \in H^0(X,-K) \text{ and } \psi \in H^0(X,L) \, \right\}.
\end{equation}
Often $H^0(X,-K)$ consists of the constant functions only, i.e., $P_{-K} \cap \mathbb{Z}^2 = \{(0,0)\}$, in which case (\ref{intheproduct}) is impossible as soon as 
$(i,j) \in \partial \Delta$.
On the other hand, if the right hand side of (\ref{intheproduct}) generates all of $H^0(X,D_f)$, or equivalently, if the map 
\begin{equation} \label{tensormap} H^0(X,-K) \otimes H^0(X,L) \to H^0(X,D_f) \end{equation} 
is surjective, then we can conclude that all $\mu_{i,j}$'s are zero on cohomology, and therefore the same is true for $\mu_f$.

We are ready to prove our main result, essentially by establishing that~\eqref{tensormap} is indeed surjective in the Gorenstein weak Fano case. Notice that this is, in fact, immediate for $X = \PP^2$ and $X = \PP^1 \times \PP^1$, i.e., for the cases \emph{(a)} and \emph{(b)} that were highlighted in the introduction.

\begin{proof}[Proof of Theorem~\ref{maintheorem}]
 We will assume that $\Delta = \Delta^\text{max}$, i.e., $\Delta$ is the maximal
polygon having $\Delta^{(1)}$ as its interior.
This is not a restriction: as soon as $C_f \subseteq X_{\Delta}$ is a smooth hyperplane section, this is also the case for 
the Zariski closure of $\varphi_{\Delta^\text{max}}(U_f)$ viewed inside $X_{\Delta^\text{max}}$, as explained in~\cite[\S4]{linearpencils}.
Moreover, the statement of Theorem~\ref{maintheorem} only involves $\Delta^{(1)}$, which is left unaffected.

We first deal with the case where $X_{\Delta^{(1)}}$ is Gorenstein weak Fano,
 which by Lemma~\ref{gwfrobust} holds if and only if $X_\Delta$ is Gorenstein weak Fano (because of our
 assumption that $\Delta$ is maximal). By Lemma~\ref{gwfrobustminimal} then also $X$ is Gorenstein weak Fano, and moreover
 $P_{-K} = P_{-K_\Delta}$.
  Now note that
 \[ P_{-K} + \Delta^{(1)} = \Delta. \]
 Indeed, the inclusion $\subseteq$ is obvious, while for the other inclusion it is enough to prove that each 
 vertex $m$ of $\Delta$ is in $P_{-K} + \Delta^{(1)}$. Let $v,w$ be consecutive elements of $U(\Sigma)$ such that $m=L(v,a_v)\cap L(w,a_w)$. By the proof of Lemma \ref{fujitalemma} we know that $m_1=L(v,a_v-1)\cap L(w,a_w-1)\in\Delta^{(1)}$, hence $m=m_0+m_1\in P_{-K} +\Delta^{(1)}$ with $m_0=L(v,1)\cap L(w,1)$.

 But then also
 \[ (P_{-K} \cap \mathbb{Z}^2) \  + \ ( \Delta^{(1)} \cap \mathbb{Z}^2) \ = \ \Delta \cap \mathbb{Z}^2 \]
 by \cite[Thm\,1.1]{HNPS}, 
 because
 the inner normal fan to $P_{-K} = P_{-K_\Delta}$ coarsens that of $\Delta^{(1)}$. Indeed, it obviously coarsens
 the inner normal fan to $\Delta$, which by Lemma~\ref{gwfrobust} is equal to the inner normal fan to $\Delta^{(1)}$.
 But this precisely means that \eqref{tensormap} is surjective, so the maps $\mu_f$ are all trivial on the level of cohomology, 
 and the conclusion follows from Lemma~\ref{whendoessumholdlemma}.
 
 As for the other case where $ | \partial \Delta^{(1)} \cap \ZZ^2| \geq g/2 + 1$, the maps $\mu_f$ 
 are trivial for a much simpler reason, namely because the dimension $c_\ell$ of the domain or the dimension $c_{g-1-\ell}$ of the 
 codomain (or both) is zero. This in turn
 follows
 from a result due to Hering and Schenck, stating that 
 $\min \{ \, \ell \, | \, c_{g-\ell} \neq 0 \, \} = | \partial \Delta^{(1)} \cap \ZZ^2 |$; see~\cite[Thm.\,IV.20]{heringphd} or~\cite{schenck}.
\end{proof}

We believe that the sum formula $a_\ell = b_\ell + c_\ell$ holds for a considerably larger class of lattice polygons, although there are counterexamples (if there were not, then
this would have negative consequences for Green's canonical syzygy conjecture, as explained in Remark~\ref{countergreenremark} in the next section).
The smallest counterexample we found lives in genus $g = 12$. Namely, consider $f = x^6 + y^2 + x^2y^6$ along with its Newton polygon $\Delta = \conv \{ (0,2), (6,0), (2,6) \}$.
A computer calculation along the lines of~\cite{canonical} shows that the graded Betti table of the canonical model of $C_f$ is given by
\[ \begin{array}{r|ccccccccccc}
  & 0 & 1 & 2 & 3 & 4 & 5 & 6 & 7 & 8 & 9 & 10 \\
\hline
0 & 1 & 0 & 0 & 0 & 0 & 0 & 0 & 0 & 0 & 0 & 0 \\
1 & 0 & 45 & 231 & 550 & 693 & 399 & 69 & 0  & 0 & 0 & 0 \\
2 & 0 & 0 & 0 & 0 & 69 & 399 & 693 & 550 & 231 & 45 & 0 \\ 
3 & 0 & 0 & 0 & 0 & 0 & 0 & 0 & 0 & 0 & 0 & 1 \\   
\end{array}
\]
while that of $X_{\Delta^{(1)}}$ is given by
\[ \begin{array}{r|cccccccccc}
  & 0 & 1 & 2 & 3 & 4 & 5 & 6 & 7 & 8 & 9 \\
\hline
0 & 1 & 0 & 0 & 0 & 0 & 0 & 0 & 0 & 0 & 0 \\
1 & 0 & 39 & 186 & 414 & 504 & 295 & 69 & 0  & 0 & 0 \\
2 & 0 & 0 & 0 & 0 & 1 & 105 & 189 & 136 & 45 & \phantom{.}6. \\ 
\end{array}
\]
Here one sees that the exact sequence \eqref{sixterms} for $\ell = 5$ reads:
\[ 0 \rightarrow 295 \rightarrow 399 \rightarrow 105 \stackrel{\mu_f}{\longrightarrow} 1 \rightarrow 69 \rightarrow 69 \rightarrow 0. \]
So $\mu_f$ is not trivial in this case, but rather surjective onto its one-dimensional codomain.

Another natural question is whether it is true in general that the graded Betti table of $C$ is independent of the coefficients of $f$, even if the sum formula does not hold.
In general one has for each $\ell = 1, \dots, g-3$ that
\[ a_\ell = b_\ell + c_\ell - \dim \im \mu_f \]
and constancy holds if and only if $\dim \im \mu_f$ depends on $\Delta$ and $\ell$ only. From \eqref{onlycoefficientsontheboundary} it follows
that, at least, there is no dependence on the coefficients $c_{i,j}$ that are supported on $\Delta^{(1)}$. In other words, only the coefficients that are supported on the boundary might a priori matter. 
A consequence of this observation is that constancy of the graded Betti table holds for primitive lattice triangles, i.e., lattice triangles without lattice points on the boundary, except for the three vertices.
Indeed, using a transformation of the form $f \leftarrow \gamma f(\alpha x, \beta y)$, with $\alpha, \beta, \gamma \in k^\ast$, one can always
arrange that the three coefficients supported on the vertices $(i_1,j_1), (i_2,j_2), (i_3,j_3)$ are all $1$. This means that $a_\ell = b_\ell + c_\ell - \dim \im \left( \mu_{i_1,j_1} + \mu_{i_2,j_2} + \mu_{i_3,j_3} \right)$, regardless
of the coefficients of $f$.

\section{Connections with Green's conjecture} \label{section_greenconnections}

In this section we elaborate the details of the announcements made in the last paragraph of the introduction, i.e., we deduce new cases 
of Green's canonical syzygy conjecture from known cases of
Conjecture~\ref{conj_CCDL} on syzygies of projectively embedded toric surfaces, and vice versa.
As a preliminary remark, note that if $\Delta^{(1)} \cong \Upsilon$ then Conjecture~\ref{conj_CCDL} is tautologically true, while Green's conjecture is known to hold
for curves of genus $g = N_\Upsilon = 4$. Therefore we can ignore this case in the proofs below.

We first prove Lemma~\ref{lemma2conj}, establishing a connection between both conjectures.

\begin{proof}[Proof of Lemma~\ref{lemma2conj}]
The right hand sides of the equalities in Conjecture~\ref{conj_Green} and Conjecture~\ref{conj_CCDL} agree by Theorem \ref{cliffindexthm}. Let us denote this common quantity by $\gamma$. 

First assume that Conjecture \ref{conj_Green} holds for some smooth irreducible hyperplane section $C_f \subseteq X_\Delta$. To deduce Conjecture \ref{conj_CCDL} for $X_{\Delta^{(1)}}$, it suffices to prove that $b_{g-(\gamma-1)}=0$. This follows from the fact that $a_{g-(\gamma-1)}=0$, along
with the exact sequence \eqref{sixterms} for $\ell=g-(\gamma-1)$.

For the other implication we need to show that $a_{g-(\gamma-1)}=0$. Since $b_{g-(\gamma-1)}=0$ by assumption, 
thanks to \eqref{sixterms} it suffices to show that $c_{g-(\gamma-1)}=0$. But this follows from Hering and Schenck's aforementioned result~\cite[Thm.\,IV.20]{heringphd}
that $\min\{\ell\,|\,c_{g-\ell}\neq 0\}=|\partial \Delta^{(1)}\cap\mathbb{Z}^2|$. Because of the stated inequality, we have that 
$$\gamma-1\leq \lw(\Delta^{(1)})+1\leq |\partial \Delta^{(1)}\cap \mathbb{Z}^2|-1,$$ hence indeed $c_{g-(\gamma-1)}=0$. 
\end{proof} 

\subsection*{New cases of Green's conjecture from known cases of Conjecture~\ref{conj_CCDL}}

We use Lemma~\ref{lemma2conj} to prove Green's conjecture for smooth curves of genus at most $32$ or Clifford index at most $6$ on arbitrary toric surfaces (Theorem~\ref{thm_greenlowgenuslowcliff}). Our key tool is the following lemma, showing that within these ranges, the additional condition that $|\partial\Delta^{(1)}\cap\ZZ^2| \geq \lw(\Delta^{(1)})+2$ is not a concern:

\begin{lemma}
Let $\Delta$ be a two-dimensional lattice polygon and assume that $\Delta^{(1)} \not \cong \Upsilon$. 
If $N_{\Delta^{(1)}} \leq 32$ or if $\lw(\Delta^{(1)}) \leq 6$ then 
$|\partial\Delta^{(1)}\cap\ZZ^2| \geq \lw(\Delta^{(1)})+2.$ 
\end{lemma}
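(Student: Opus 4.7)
The strategy is to verify the bound $|\partial \Delta^{(1)} \cap \ZZ^2| \geq \lw(\Delta^{(1)}) + 2$ separately in each of the two stated ranges, handling the small-lattice-width case by a direct combinatorial enumeration and the small-$N$ case by invoking a known classification.

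For the case $\lw(\Delta^{(1)}) \leq 6$: after a unimodular transformation, assume that $\Delta^{(1)} \subseteq \{ (x,y) : 0 \leq y \leq w \}$ with $w := \lw(\Delta^{(1)})$ attained. We proceed by cases on $w$. For $w = 1$ the polygon is a lattice trapezoid and the bound $|\partial \Delta^{(1)} \cap \ZZ^2| \geq 3$ is immediate. For $w = 2$ a short case check identifies $\Upsilon$ as the unique exception, which is excluded by hypothesis. For $3 \leq w \leq 6$ the admissible polygons are parametrized by the horizontal slice data $(\ell_0, \ldots, \ell_w)$ together with their horizontal offsets, subject to convexity; the additional requirement that $\Delta^{(1)}$ be realizable as the interior polygon of some ambient lattice polygon $\Delta$ cuts down the enumeration further. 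In particular, this condition rules out several near-degenerate triangles that would a priori violate the bound: for instance $\conv\{(0,0),(5,2),(2,5)\}$ has $|\partial \cap \ZZ^2| = 5$ and $\lw = 5$, but its ``moved-out'' polygon has non-integer vertices, and a direct check confirms no lattice polygon $\Delta$ satisfies $\Delta^{(1)} = \conv\{(0,0),(5,2),(2,5)\}$. After discarding such non-realizable cases, a case-by-case verification over the finite list of remaining slice configurations establishes the bound.

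For the case $N_{\Delta^{(1)}} \leq 32$: we invoke the explicit enumeration of two-dimensional interior lattice polygons with at most $32$ lattice points, as compiled in~\cite{movingout} and already used in~\cite{previouspaper} to verify Conjecture~\ref{conj_CCDL} in this range. The list is finite up to unimodular equivalence, and direct inspection confirms that $\Upsilon$ is the unique polygon in this range for which the inequality $|\partial \Delta^{(1)} \cap \ZZ^2| \geq \lw(\Delta^{(1)}) + 2$ fails.

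The main obstacle is the enumeration in the medium-lattice-width range $w \in \{3, 4, 5, 6\}$: one must precisely formulate the ``realizable as an interior polygon'' constraint and use it to discard the finitely many triangles with few boundary lattice points (such as the triangle $\conv\{(0,0),(5,2),(2,5)\}$ mentioned above) that would otherwise be near-exceptional. Once these are eliminated, the remaining check is entirely finite and routine.
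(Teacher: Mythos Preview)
Your proposal is correct and follows essentially the same strategy as the paper: the $N_{\Delta^{(1)}} \leq 32$ case is dispatched by invoking the enumeration from~\cite{movingout}, and the $\lw(\Delta^{(1)}) \leq 6$ case by a finite combinatorial check leaning crucially on the constraint that $\Delta^{(1)}$ be realizable as an interior polygon. The paper organizes the latter enumeration somewhat more tightly---arguing by contradiction, splitting $\partial \Delta^{(1)} \cap \ZZ^2$ into a left set $\mathcal{L}$ and a right set $\mathcal{R}$, and bounding $|\mathcal{L}| \leq \lfloor (\lw(\Delta^{(1)})+3)/2 \rfloor$ to obtain a short explicit list of left-boundary shapes---whereas your slice-data parametrization leaves the finiteness bound more implicit; but the underlying idea and the decisive use of the interior-polygon constraint are the same.
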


\begin{proof}
The cases where $N_{\Delta^{(1)}} \leq 32$ are covered by an explicit computational verification, again using the data from~\cite{movingout}: as it turns out, up to unimodular equivalence the only two-dimensional interior lattice polygon $\Delta^{(1)}$ within this range that does \emph{not} satisfy 
the stated inequality is $\Delta^{(1)} = \Upsilon$. 

As for the cases where $\lw(\Delta^{(1)}) \leq 6$, it suffices to assume that $\lw(\Delta^{(1)}) \geq 3$, because the other cases are easy (for instance, it is an exercise to show that up to unimodular equivalence, the only two-dimensional interior lattice polygons whose boundary contains exactly three lattice points are $\Sigma$ and $\Upsilon$).
We prove the statement by contradiction, so assume that $$|\partial\Delta^{(1)}\cap\ZZ^2|\leq \lw(\Delta^{(1)})+1.$$ Moreover, we may assume that 
$\Delta^{(1)}$ lies in the horizontal strip $\RR\times[1,\lw(\Delta^{(1)})+1]$ and that it is not unimodularly equivalent to $k\Sigma$ for some $k$ (since the lemma holds for such polygons), hence $\Delta\subseteq \RR\times[0,\lw(\Delta^{(1)})+2]$.  
Denote by $\mathcal{L}$ and $\mathcal{R}$ the lattice points of respectively the left hand side boundary and the right hand side boundary of 
$\Delta^{(1)}$. After a reflection over a vertical axis if needed, we may assume that $|\mathcal{L}|\leq | \mathcal{R}|$. Since 
$$|\mathcal{L}\cup \mathcal{R}|\leq \lw(\Delta^{(1)})+1 \quad \text{and} \quad |\mathcal{L}\cap \mathcal{R}|\leq 2,$$ 
we get that $$|\mathcal{L}| \leq \floor{\frac{\lw(\Delta^{(1)})+3}{2}}.$$ 
The above inequality is very restrictive. After an exhaustive search, we can list all possibilities for $\mathcal{L}$ up to a unimodular transformation (to be more precise, a reflection over a horizontal axis and/or a horizontal shearing) under our assumption that $\lw(\Delta^{(1)})\in\{3,4,5,6\}$. Hereby, we also use that $\Delta^{(1)}$ is an interior lattice polygon (so the corners at vertices have to be sufficiently `good') and that 
$\Delta$ is contained in $\RR\times[0,\lw(\Delta^{(1)})+2]$. Figure~\ref{boundary} shows all these possibilities: the black edges represent the left hand side boundary of $\Delta^{(1)}$ and the blue edges the induced part of the boundary of $\Delta$. 
\begin{figure}[h]
\centering
\begin{tikzpicture}[scale=0.4]
  \draw[lightgray] (0,0) grid (23,15);

  \draw[blue] (1,1) -- (3,7);
  \draw[blue,fill=blue] (1,1) circle [radius=0.15]; 
  \draw[blue,fill=blue] (2,4) circle [radius=0.15];
  \draw[blue,fill=blue] (3,7) circle [radius=0.15];
  \draw (2,2) -- (2,3) -- (3,6) -- (4,7);
  \draw[fill] (2,2) circle [radius=0.15]; 
  \draw[fill] (2,3) circle [radius=0.15];
  \draw[fill] (3,6) circle [radius=0.15];   
  \draw[fill] (4,7) circle [radius=0.15];   
  
  \draw[blue] (7,0) -- (9,8);
  \draw[blue,fill=blue] (7,0) circle [radius=0.15]; 
  \draw[blue,fill=blue] (8,4) circle [radius=0.15];
  \draw[blue,fill=blue] (9,8) circle [radius=0.15];
  \draw (8,1) -- (8,3) -- (9,7);
  \draw[fill] (8,1) circle [radius=0.15]; 
  \draw[fill] (8,2) circle [radius=0.15];
  \draw[fill] (8,3) circle [radius=0.15];   
  \draw[fill] (9,7) circle [radius=0.15]; 
  
  \draw[blue] (13,0) -- (17,8);
  \draw[blue,fill=blue] (13,0) circle [radius=0.15]; 
  \draw[blue,fill=blue] (14,2) circle [radius=0.15];
  \draw[blue,fill=blue] (15,4) circle [radius=0.15];
  \draw[blue,fill=blue] (16,6) circle [radius=0.15];
  \draw[blue,fill=blue] (17,8) circle [radius=0.15];
  \draw (14,1) -- (17,7);
  \draw[fill] (14,1) circle [radius=0.15]; 
  \draw[fill] (15,3) circle [radius=0.15];
  \draw[fill] (16,5) circle [radius=0.15];   
  \draw[fill] (17,7) circle [radius=0.15];

  \draw[blue] (19,0) -- (21,6) -- (22,8);
  \draw[blue,fill=blue] (19,0) circle [radius=0.15]; 
  \draw[blue,fill=blue] (20,3) circle [radius=0.15];
  \draw[blue,fill=blue] (21,6) circle [radius=0.15];
  \draw[blue,fill=blue] (22,8) circle [radius=0.15];
  \draw (20,1) -- (20,2) -- (21,5) -- (22,7);
  \draw[fill] (20,1) circle [radius=0.15]; 
  \draw[fill] (20,2) circle [radius=0.15];
  \draw[fill] (21,5) circle [radius=0.15];   
  \draw[fill] (22,7) circle [radius=0.15];

  \draw[blue] (0,10) -- (2,14);
  \draw[blue,fill=blue] (0,10) circle [radius=0.15]; 
  \draw[blue,fill=blue] (1,12) circle [radius=0.15];
  \draw[blue,fill=blue] (2,14) circle [radius=0.15];
  \draw (1,10) -- (1,11) -- (2,13);
  \draw[fill] (1,10) circle [radius=0.15]; 
  \draw[fill] (1,11) circle [radius=0.15];
  \draw[fill] (2,13) circle [radius=0.15];   

  \draw[blue] (5,9) -- (8,15);
  \draw[blue,fill=blue] (5,9) circle [radius=0.15]; 
  \draw[blue,fill=blue] (6,11) circle [radius=0.15];
  \draw[blue,fill=blue] (7,13) circle [radius=0.15];
  \draw[blue,fill=blue] (8,15) circle [radius=0.15];
  \draw (6,10) -- (8,14);
  \draw[fill] (6,10) circle [radius=0.15]; 
  \draw[fill] (7,12) circle [radius=0.15];
  \draw[fill] (8,14) circle [radius=0.15];  
  
  \draw[blue] (10,9) -- (12,15);
  \draw[blue,fill=blue] (10,9) circle [radius=0.15]; 
  \draw[blue,fill=blue] (11,12) circle [radius=0.15];
  \draw[blue,fill=blue] (12,15) circle [radius=0.15];
  \draw (11,10) -- (11,11) -- (12,14);
  \draw[fill] (11,10) circle [radius=0.15]; 
  \draw[fill] (11,11) circle [radius=0.15];
  \draw[fill] (12,14) circle [radius=0.15];
  
  \draw[blue] (15,9) -- (17,15);
  \draw[blue,fill=blue] (15,9) circle [radius=0.15]; 
  \draw[blue,fill=blue] (16,12) circle [radius=0.15];
  \draw[blue,fill=blue] (17,15) circle [radius=0.15];
  \draw (16,9) -- (16,11) -- (17,14);
  \draw[fill] (16,9) circle [radius=0.15]; 
  \draw[fill] (16,10) circle [radius=0.15]; 
  \draw[fill] (16,11) circle [radius=0.15];
  \draw[fill] (17,14) circle [radius=0.15];
  
  \draw[blue] (20,9) -- (23,15);
  \draw[blue,fill=blue] (20,9) circle [radius=0.15]; 
  \draw[blue,fill=blue] (21,11) circle [radius=0.15];
  \draw[blue,fill=blue] (22,13) circle [radius=0.15];
  \draw[blue,fill=blue] (23,15) circle [radius=0.15];
  \draw (21,9) -- (21,10) -- (23,14);
  \draw[fill] (21,9) circle [radius=0.15]; 
  \draw[fill] (21,10) circle [radius=0.15]; 
  \draw[fill] (22,12) circle [radius=0.15];
  \draw[fill] (23,14) circle [radius=0.15];
   
\end{tikzpicture}
\caption{Possibilities for $\mathcal{L}$}
\label{boundary}
\end{figure}
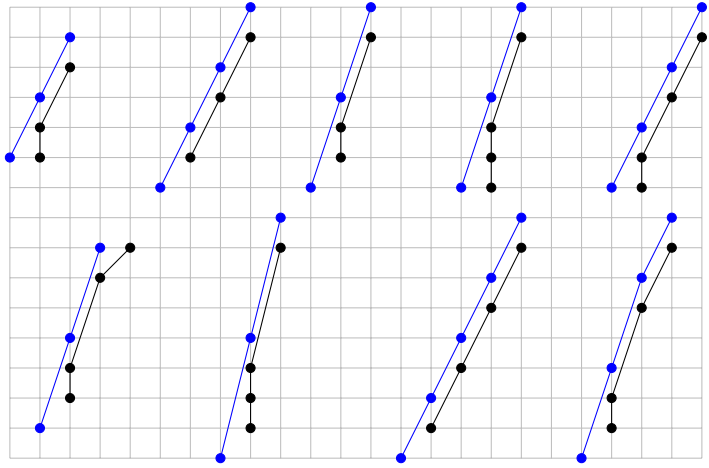
Note that $|\mathcal{L}|$ meets the upper bound $\floor{\frac{\lw(\Delta^{(1)})+3}{2}}$ in all these cases, hence 
$$|\mathcal{L}|\leq |\mathcal{R}| \leq \ceil{\frac{\lw(\Delta^{(1)})+3}{2}} = \begin{cases} |\mathcal{L}| & \text{if } 
\lw(\Delta^{(1)})\in\{3,5\}\\ |\mathcal{L}|+1 & \text{if } \lw(\Delta^{(1)})\in\{4,6\}\end{cases},$$ 
so either $|\mathcal{R}|=| \mathcal{L}|$ or $| \mathcal{R}|=|\mathcal{L}|+1$ and $\lw(\Delta^{(1)})\in\{4,6\}$. 

First assume that $|\mathcal{R}|=|\mathcal{L}|$, hence also $\mathcal{R}$ is listed in Figure \ref{boundary} up to a unimodular transformation. If $\lw(\Delta^{(1)})\in\{3,5\}$, then $|\mathcal{L}\cap\mathcal{R}|=2$, so there are only two lattice points of $\Delta^{(1)}$ on the lines at height $1$ and $\lw(\Delta^{(1)})+1$. Given $\mathcal{L}$, this leaves only a couple of possibilities for $\mathcal{R}$ and $\Delta^{(1)}$. For each of these, the lattice width is smaller then assumed, a contradiction. If $\lw(\Delta^{(1)})\in\{4,6\}$, then 
either $|\mathcal{L}\cap\mathcal{R}|=2$ and we can proceed as before, or $|\mathcal{L}\cap\mathcal{R}|=1$ and there are three lattice points of $\Delta^{(1)}$ on the lines at height $1$ and $\lw(\Delta^{(1)})+1$. Again this suffices to give a list of all possible polygons $\Delta^{(1)}$, none of which has the correct lattice width, leading to a contradiction.  

Finally, assume that $| \mathcal{R}|=| \mathcal{L}|+1$ and $\lw(\Delta^{(1)})\in\{4,6\}$. In this case, we have that $|\mathcal{L}\cap\mathcal{R}|=2$, hence there are only two lattice points of $\Delta^{(1)}$ on the lines at height $1$ and $\lw(\Delta^{(1)})+1$. Given a possible $\mathcal{L}$, convexity considerations provide a region in which $\Delta^{(1)}$ must strictly lie (here, `strict' means that no boundary point of the region is in $\Delta^{(1)}$). In Figure \ref{region}, the boundaries of the regions are indicated by the blue normal/dashed lines, so each lattice point of $\Delta^{(1)}$ must be one of the black dots. 
\begin{figure}[h]
\centering
\begin{tikzpicture}[scale=0.4]
  \draw[lightgray] (0,0) grid (27,14);
  
  \draw[blue] (0,0) -- (2,8);
  \draw[blue,fill=blue] (0,0) circle [radius=0.15]; 
  \draw[blue,fill=blue] (1,4) circle [radius=0.15];
  \draw[blue,fill=blue] (2,8) circle [radius=0.15];
  \draw (1,1) -- (1,3) -- (2,7);
  \draw[fill] (1,1) circle [radius=0.15]; 
  \draw[fill] (1,2) circle [radius=0.15]; 
  \draw[fill] (1,3) circle [radius=0.15];
  \draw[fill] (2,7) circle [radius=0.15];
  \draw[fill] (2,2) circle [radius=0.15]; 
  \draw[fill] (2,3) circle [radius=0.15]; 
  \draw[fill] (2,4) circle [radius=0.15];
  \draw[fill] (2,5) circle [radius=0.15];
  \draw[fill] (2,6) circle [radius=0.15]; 
  \draw[fill] (3,2) circle [radius=0.15];
  \draw[fill] (3,3) circle [radius=0.15];  
  \draw[fill] (3,4) circle [radius=0.15];
  \draw[fill] (3,5) circle [radius=0.15];
  \draw[fill] (3,6) circle [radius=0.15];
  \draw[fill] (4,3) circle [radius=0.15];
  \draw[fill] (4,4) circle [radius=0.15];
  \draw[fill] (4,5) circle [radius=0.15];
  \draw[fill] (5,3) circle [radius=0.15];
  \draw[fill] (5,4) circle [radius=0.15];
  \draw[blue,dashed] (0,0) -- (6.66,3.33) -- (2,8);
  
  \draw[blue] (9,0) -- (13,8);
  \draw[blue,fill=blue] (9,0) circle [radius=0.15]; 
  \draw[blue,fill=blue] (10,2) circle [radius=0.15];
  \draw[blue,fill=blue] (11,4) circle [radius=0.15];
  \draw[blue,fill=blue] (12,6) circle [radius=0.15];
  \draw[blue,fill=blue] (13,8) circle [radius=0.15];
  \draw (10,1) -- (13,7);
  \draw[fill] (10,1) circle [radius=0.15]; 
  \draw[fill] (11,3) circle [radius=0.15]; 
  \draw[fill] (12,5) circle [radius=0.15];
  \draw[fill] (13,7) circle [radius=0.15];
  \draw[fill] (11,2) circle [radius=0.15]; 
  \draw[fill] (12,2) circle [radius=0.15]; 
  \draw[fill] (12,3) circle [radius=0.15];
  \draw[fill] (12,4) circle [radius=0.15];
  \draw[fill] (13,3) circle [radius=0.15]; 
  \draw[fill] (13,4) circle [radius=0.15];
  \draw[fill] (13,5) circle [radius=0.15];  
  \draw[fill] (13,6) circle [radius=0.15];
  \draw[fill] (14,3) circle [radius=0.15];
  \draw[fill] (14,4) circle [radius=0.15];
  \draw[fill] (14,5) circle [radius=0.15];
  \draw[fill] (14,6) circle [radius=0.15];
  \draw[fill] (15,4) circle [radius=0.15];
  \draw[fill] (15,5) circle [radius=0.15];
  \draw[fill] (16,4) circle [radius=0.15];
  \draw[blue,dashed] (9,0) -- (17,4) -- (13,8); 

  \draw[blue] (19,0) -- (21,6) -- (22,8);
  \draw[blue,fill=blue] (19,0) circle [radius=0.15]; 
  \draw[blue,fill=blue] (20,3) circle [radius=0.15];
  \draw[blue,fill=blue] (21,6) circle [radius=0.15];
  \draw[blue,fill=blue] (22,8) circle [radius=0.15];
  \draw (20,1) -- (20,2) -- (21,5) -- (22,7);
  \draw[fill] (20,1) circle [radius=0.15]; 
  \draw[fill] (20,2) circle [radius=0.15]; 
  \draw[fill] (21,5) circle [radius=0.15];
  \draw[fill] (22,7) circle [radius=0.15];
  \draw[fill] (21,2) circle [radius=0.15]; 
  \draw[fill] (21,3) circle [radius=0.15]; 
  \draw[fill] (21,4) circle [radius=0.15];
  \draw[fill] (22,2) circle [radius=0.15];
  \draw[fill] (22,3) circle [radius=0.15]; 
  \draw[fill] (22,4) circle [radius=0.15];
  \draw[fill] (22,5) circle [radius=0.15];  
  \draw[fill] (22,6) circle [radius=0.15];
  \draw[fill] (23,3) circle [radius=0.15];
  \draw[fill] (23,4) circle [radius=0.15];
  \draw[fill] (23,5) circle [radius=0.15];
  \draw[fill] (23,6) circle [radius=0.15];
  \draw[fill] (24,3) circle [radius=0.15];
  \draw[fill] (24,4) circle [radius=0.15];
  \draw[fill] (24,5) circle [radius=0.15];
  \draw[fill] (25,4) circle [radius=0.15];
  \draw[blue,dashed] (19,0) -- (26.33,3.66) -- (22,8); 

  \draw[blue] (6,8) -- (9,14);
  \draw[blue,fill=blue] (6,8) circle [radius=0.15]; 
  \draw[blue,fill=blue] (7,10) circle [radius=0.15];
  \draw[blue,fill=blue] (8,12) circle [radius=0.15];
  \draw[blue,fill=blue] (9,14) circle [radius=0.15];
  \draw (7,9) -- (8,11) -- (9,13);
  \draw[fill] (7,9) circle [radius=0.15]; 
  \draw[fill] (8,11) circle [radius=0.15]; 
  \draw[fill] (9,13) circle [radius=0.15];
  \draw[fill] (8,10) circle [radius=0.15]; 
  \draw[fill] (9,10) circle [radius=0.15]; 
  \draw[fill] (9,11) circle [radius=0.15];
  \draw[fill] (9,12) circle [radius=0.15];
  \draw[fill] (9,13) circle [radius=0.15]; 
  \draw[fill] (10,11) circle [radius=0.15];
  \draw[fill] (10,12) circle [radius=0.15];  
  \draw[fill] (11,11) circle [radius=0.15];
  \draw[blue,dashed] (6,8) -- (12,11) -- (9,14);

  \draw[blue] (17,8) -- (19,14);
  \draw[blue,fill=blue] (17,8) circle [radius=0.15]; 
  \draw[blue,fill=blue] (18,11) circle [radius=0.15];
  \draw[blue,fill=blue] (19,14) circle [radius=0.15];
  \draw (18,9) -- (18,10) -- (19,13);
  \draw[fill] (18,9) circle [radius=0.15]; 
  \draw[fill] (18,10) circle [radius=0.15]; 
  \draw[fill] (19,13) circle [radius=0.15];
  \draw[fill] (19,10) circle [radius=0.15]; 
  \draw[fill] (19,11) circle [radius=0.15]; 
  \draw[fill] (19,12) circle [radius=0.15]; 
  \draw[fill] (20,10) circle [radius=0.15];
  \draw[fill] (20,11) circle [radius=0.15];
  \draw[fill] (20,12) circle [radius=0.15]; 
  \draw[fill] (21,11) circle [radius=0.15];
  \draw[blue,dashed] (17,8) -- (22.33,10.66) -- (19,14);

\end{tikzpicture}

\caption{Regions for $\Delta^{(1)}$}
\label{region}
\end{figure}
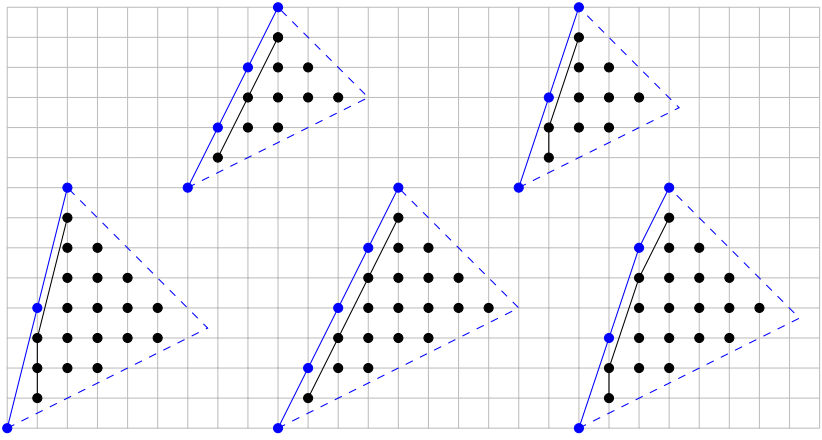

\noindent So either $\Delta^{(1)}$ is too small (i.e., it doesn't have the correct lattice width), or it is of the form $k\Upsilon$ with $k\in\{2,3\}$, but then $|\partial\Delta^{(1)}\cap\ZZ^2|=3k\geq 2k+2=\lw(\Delta^{(1)})+2$, a contradiction. 
\end{proof}

\begin{proof}[Proof of Theorem~\ref{thm_greenlowgenuslowcliff}]
It suffices to prove
the conjecture for curves of the form $C_f \subseteq X_\Delta$.
Now, as mentioned, our Conjecture \ref{conj_CCDL} has been verified for all interior lattice polygons $\Delta^{(1)}$ having at most $32$ lattice points or having lattice width at most $6$. 
Thus the claim follows from Lemma \ref{lemma2conj} and Theorem~\ref{cliffindexthm}.
\end{proof}

\begin{remark} \label{countergreenremark}
The inequality $|\partial\Delta^{(1)}\cap\ZZ^2| \geq \lw(\Delta^{(1)})+2$ is satisfied for the vast majority of polygons. In fact it is not so easy to find polygons for which the inequality is \emph{not} satisfied, where of course the condition of being interior is crucial: if we omit this assumption, it is trivial to find counterexamples (e.g., the primitive lattice triangles that we encountered at the end of Section~\ref{section_constancyresults} 
can have arbitrarily large lattice width). The smallest interior counterexample that 
we encountered is $\Delta^{(1)}$ where $\Delta = \conv \{ (4,0), (0,10), (10,4) \}$. This concerns a $9$-gon without extra points on the boundary, satisfying
$g=|\Delta^{(1)}\cap \mathbb{Z}^2|=36$ and $\lw(\Delta^{(1)})=8$, see Figure~\ref{triangle}.
\begin{figure}[h] 
\centering
\begin{tikzpicture}[scale=0.4]
  \draw[lightgray] (0,0) grid (10,10);
  \draw[thick] (4,0) -- (10,4) -- (0,10) -- (4,0);
  \draw[dashed] (4,1) -- (5,1) -- (8,3) -- (9,4) -- (8,5) -- (3,8) -- (1,9) -- (1,8) -- (3,3) -- (4,1);
  \draw[fill] (4,1) circle [radius=0.15];
  \draw[fill] (5,1) circle [radius=0.15];
  \draw[fill] (8,3) circle [radius=0.15];
  \draw[fill] (9,4) circle [radius=0.15];
  \draw[fill] (8,5) circle [radius=0.15];
  \draw[fill] (3,8) circle [radius=0.15];  
  \draw[fill] (1,9) circle [radius=0.15];
  \draw[fill] (1,8) circle [radius=0.15];
  \draw[fill] (3,3) circle [radius=0.15];
  \node at (6,8) {$\Delta$};
\end{tikzpicture}
\caption{Counterexample to the inequality $|\partial \Delta^{(1)}\cap \mathbb{Z}^2|\geq \lw(\Delta^{(1)})+2$}
\label{triangle}
\end{figure}
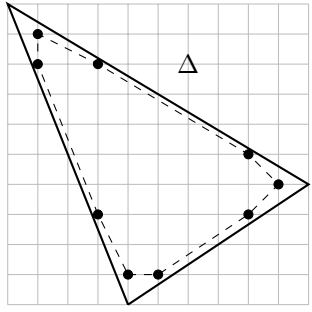
If we want to check Green's conjecture for this specific polygon $\Delta$, we need to show that $a_{27}=0$ (since $g-(\gamma-1)=27$). Using the algorithm from~\cite{previouspaper} we checked that $b_{27}=0$, therefore Conjecture~\ref{conj_CCDL} holds in this case. On the other hand $c_{27}\neq 0$ by Hering and Schenck's result, so we cannot use \eqref{sixterms} to conclude that $a_{27} = 0$. In fact if the sum formula $a_{27} = b_{27} + c_{27}$ from the statement of Theorem~\ref{maintheorem} 
would be true in this case (which we do not believe is the case), then from $c_{27} \neq 0$ it would follow that $a_{27} \neq 0$ and hence that Green's conjecture is false!
\end{remark}

\subsection*{New cases of Conjecture~\ref{conj_CCDL}
from known cases of Green's conjecture} 

Here the main input is due to
Lelli-Chiesa, who proved Green's conjecture for curves on smooth rational surfaces, modulo certain assumptions, the most restrictive one being the existence of an anticanonical pencil. 
Let us state her result more precisely, adapting the notation to our specific case of curves of the form $C_f \subseteq X_\Delta$. Because the ambient surface needs to be smooth, as in Section~\ref{sectionsixterms} we let $X\to X_\Delta$ be the minimal toric resolution of singularities and write $C'$ for the pull-back of $C_f$. Again we
let $D_f = C' - \Div(f)$ and consider the canonical divisor $K=-\sum_v D_v$, where $v$ ranges over the set $U(\Sigma)$ of primitive generators of the rays of the fan $\Sigma$ of $X$.

\begin{theorem}[see \cite{lellichiesa}] \label{lellichiesatheorem} Assume that the following conditions are satisfied:
\begin{itemize}
\item $L=D_f+K$ is big and nef,
\item $h^0(X,-K)\geq 2$,
\item if $h^0(X,-K)=2$, then the Clifford index of a general curve $C \in |D_f|$ is not computed by restricting the anticanonical divisor to $C$.
\end{itemize}
Then Green's conjecture is true for $C_f$.
\end{theorem}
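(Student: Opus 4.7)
The plan is to reduce Green's conjecture for $C_f$ to a vanishing statement for Koszul cohomology on the smooth toric surface $X$, and then exploit the anticanonical pencil to produce the required syzygies. By adjunction $K_{C'}=L|_{C'}$, so the canonical embedding of $C_f$ factors through $X\hookrightarrow\mathbb{P}^{g-1}$ as described in Section~\ref{sectionsixterms}. Setting $\gamma=\operatorname{Cliff}(C_f)+2$, Green's conjecture amounts to showing $K_{g-\gamma,1}(C_f,K_{C_f})=0$, equivalently (by duality) $K_{\gamma-2,2}(C_f,K_{C_f})=0$. Through the six-term sequence \eqref{sixterms}, this can be translated into a question about $K_{\gamma-2,2}(X,L)$, the multiplication map $\mu_f$, and $K_{\gamma-3,1}(X;K,L)$.

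First I would establish the framework: because $L$ is big and nef and $P_L=\Delta^{(1)}$ is two-dimensional, Kodaira/Demazure-type vanishing gives the cohomological setup needed for the Koszul sequence to behave well. Next, I would use the hypothesis $h^0(X,-K)\geq 2$ to produce an effective anticanonical pencil $\{F_t\}_{t\in\mathbb{P}^1}$ on $X$. Restricting the pencil to $C'$ yields a line bundle $(-K)|_{C'}$ of degree $-K\cdot D_f$, together with enough sections to make it contribute to the Clifford algebra of the curve. The additional assumption (in the case $h^0(X,-K)=2$) that $\operatorname{Cliff}(C_f)$ is not computed by $(-K)|_{C'}$ guarantees that the Clifford index is achieved by a line bundle $A$ whose geometry is genuinely tied to the surface, not merely to a single anticanonical section.

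The technical core of the argument is then a Lazarsfeld--Mukai style construction: attach to $A$ (a bundle computing $\operatorname{Cliff}(C_f)$) a rank-two reflexive sheaf $E_{C_f,A}$ on $X$ fitting in $0\to E_{C_f,A}^{\vee}\to H^0(C_f,A)\otimes\mathcal{O}_X\to A\to 0$. Stability (or a Bogomolov-style analysis) of $E_{C_f,A}$ is controlled by the pencil $|-K|$: the anticanonical pencil provides enough mobility to destabilise $E_{C_f,A}$ only in a controlled way, and the third hypothesis rules out the pathological destabilisation. From a non-special $E_{C_f,A}$ one extracts, via the geometric syzygy theorem (Green--Lazarsfeld / Voisin / Aprodu--Farkas machinery adapted to rational surfaces), a non-zero element of $K_{p,1}(X,L)$ in the appropriate bidegree, and then the six-term sequence \eqref{sixterms} translates this into the desired non-vanishing on the curve side paired with vanishing beyond it.

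The main obstacle is the middle step: controlling the Harder--Narasimhan filtration of the Lazarsfeld--Mukai bundle $E_{C_f,A}$ on a rational surface whose Picard rank may be large. On a K3 surface, Voisin's proof leverages triviality of the canonical class; on a rational surface with only a pencil of anticanonical sections, one has much less rigidity, and the delicate third hypothesis is precisely what is needed to rule out the destabilising sub-line bundles arising from members of $|-K|$. All subsequent steps (translating non-vanishing of Koszul classes on $X$ to the desired statement on $C_f$ via \eqref{sixterms}, and checking that the extremal Betti numbers behave as predicted) are then comparatively formal.
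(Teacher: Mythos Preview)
The paper does not prove this theorem at all: it is quoted verbatim from Lelli-Chiesa's work \cite{lellichiesa} and used as a black box. So there is no ``paper's own proof'' to compare your proposal against.

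That said, a couple of remarks on your sketch. First, there is an index slip: with $\gamma=\ci(C_f)+2$, the non-trivial direction of Green's conjecture is the vanishing $a_{g-\gamma+1}=K_{g-\gamma+1,1}(C_f,K_{C_f})=0$ (dually $K_{\gamma-3,2}=0$), whereas $a_{g-\gamma}$ is the entry that is supposed to be \emph{non}-zero. Second, and more substantively, the six-term sequence \eqref{sixterms} is the wrong tool here: it lets you transport information between the curve and the toric surface $X_{\Delta^{(1)}}$, but it does not by itself produce the vanishing you need on the surface side. Lelli-Chiesa's argument works directly on the smooth rational surface and does not pass through \eqref{sixterms}; in the present paper the logic runs the other way (her theorem is input, and \eqref{sixterms} is used afterwards, in Lemma~\ref{lemma2conj}, to deduce consequences for $X_{\Delta^{(1)}}$). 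Your high-level picture of the Lazarsfeld--Mukai bundle $E_{C,A}$ and the role of the anticanonical pencil in controlling its destabilising subsheaves is broadly correct as a summary of what happens in \cite{lellichiesa}, but the actual analysis (Bogomolov instability, the classification of the resulting extensions, and the reduction to Aprodu's linear growth condition rather than a direct geometric-syzygy argument) is considerably more involved than your outline suggests and is not reproduced in this paper.
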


Note that the second condition can be rephrased as $|P_{-K}\cap\mathbb{Z}^2|\geq 2$.
The first condition is automatically satisfied for toric surfaces: $L$ is nef because of Lemma \ref{fujitalemma} and big because $\Delta^{(1)}$ is two-dimensional.
The next two lemma's show that also the third condition is void in our case. 

\begin{lemma} \label{lemma_lellichiesacondition}
Let $X$ be a toric surface with $h^0(X,-K)=2$. If $\Delta=P_D$ is the polygon of a torus-invariant nef Cartier divisor $D$ on $X$, then 
$\textup{lw}(\Delta)<|\partial\Delta\cap\ZZ^2|$.
\end{lemma}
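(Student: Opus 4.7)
The plan is to translate the hypothesis $h^0(X,-K)=2$ into combinatorial constraints on the fan $\Sigma$ of $X$ and then derive the inequality via a boundary-tracing computation on $\Delta$. First I would identify the unique non-origin lattice point $u\in P_{-K}\cap \ZZ^2$, observing that it must be primitive since the segment $[0,u]\subseteq P_{-K}$ contains no further lattice points. The integer pairing $\langle u,v\rangle\geq -1$ for each primitive ray $v\in U(\Sigma)$ then takes values in $\{-1\}\cup\ZZ_{\geq 0}$, giving a partition $U(\Sigma)=U_-\sqcup U_0\sqcup U_+$ according to whether the pairing equals $-1$, $0$, or is at least $1$. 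After a unimodular change of coordinates I will assume $u=(1,0)$.

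Next I would dispose of the low-dimensional cases: if $\dim \Delta\leq 1$ then $\lw(\Delta)=0$ while $|\partial\Delta\cap\ZZ^2|\geq 1$, so the inequality is automatic. For $\dim\Delta=2$ I will use the standard identity $|\partial\Delta\cap\ZZ^2|=\sum_v \ell_v$, where $\ell_v:=D\cdot D_v$ is the lattice length of the edge of $\Delta$ dual to $v\in U(\Sigma)$ (and $0$ when no such edge exists); this comes from $\sum_v D\cdot D_v=-D\cdot K$.

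The heart of the argument is a closedness computation for $\partial\Delta$. Traversing it counter-clockwise, the edge dual to $v=(v_1,v_2)$ contributes the edge vector $\ell_v(-v_2,v_1)$, so $\sum_v \ell_v v_1=0$. Splitting this sum over the three classes, using $v_1=\langle u,v\rangle=-1$ on $U_-$ and $v_1=0$ on $U_0$, rearranges to $\sum_{v\in U_-}\ell_v=\sum_{v\in U_+}\ell_v v_1$, and I would recognize the right-hand side as the width $w_{u^\perp}(\Delta)$ of $\Delta$ in the direction $u^\perp=(0,1)$, computed as the total upward movement of the boundary. Since $w_{u^\perp}(\Delta)\geq \lw(\Delta)$, this yields $\sum_{v\in U_-}\ell_v\geq \lw(\Delta)$.

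To finish, I would split $|\partial\Delta\cap\ZZ^2|=\sum_{v\in U_-}\ell_v+\sum_{v\in U_0\cup U_+}\ell_v$ and argue the second sum is strictly positive. This strictness is the main obstacle; I expect to handle it by noting that the rays in $U_-$ all lie on the affine line $\{v:v_1=-1\}\subset N_\RR$, a line that misses the origin, so their positive hull cannot span $N_\RR$. Consequently, a bounded $2$-dimensional $\Delta$ cannot have every edge dual to $U_-$, forcing $\sum_{v\in U_0\cup U_+}\ell_v\geq 1$. Combining with the inequality above delivers the strict bound $|\partial\Delta\cap\ZZ^2|>\lw(\Delta)$.
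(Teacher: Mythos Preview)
Your argument is correct and, at its core, rests on the same observation as the paper: after normalizing so that the non-zero anticanonical lattice point is $(1,0)$, every primitive ray generator $v$ has first coordinate $v_1\geq -1$, and the edges with $v_1=-1$ together account for exactly the vertical extent $y_2-y_1$ of $\Delta$. The paper phrases this geometrically---each ``right-hand'' edge (inner normal of the form $(-1,b)$) meets every horizontal line at integral height in a lattice point, so the right boundary alone already contributes $y_2-y_1+1>y_2-y_1\geq\lw(\Delta)$ lattice points. You instead phrase it algebraically via the closedness identity $\sum_v \ell_v v_1=0$, which rearranges to $\sum_{U_-}\ell_v=\sum_{U_+}\ell_v v_1=w_{(0,1)}(\Delta)$, and then obtain strictness from the existence of an edge outside $U_-$. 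These are two packagings of the same computation; your version has the mild advantage of handling the low-dimensional cases explicitly and of making the strictness step transparent, while the paper's direct count is shorter.
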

\begin{proof}
The fact that $D$ is nef and Cartier ensures that $\Delta$ is a lattice polygon. Now
there is a non-zero lattice point $m_0\in P_{-K}$ and by using a unimodular transformation if needed, we can assume that $m_0=(1,0)$. 
Let $y_1$ (resp.\ $y_2$) be the minimum (resp.\ maximum) of the second coordinates of the points of $\Delta$.
For all 
$v\in U(\Sigma)$, we have that $\langle m_0,v\rangle\geq-1$, hence the first coordinate of each $v\in U(\Sigma)$ is at least $-1$. Consider an edge $e$ at the right hand side of $\Delta$, i.e., an edge
whose inner normal vector has a strictly negative first coordinate. Then the corresponding $v\in U(\Sigma)$ must have first coordinate equal to $-1$. Hence, if $e$ intersects a horizontal line at integral height, then this point of intersection is 
a lattice point. As a consequence $|\partial\Delta\cap\ZZ^2|> y_2-y_1 \geq \lw(\Delta)$. 
\end{proof}

\begin{lemma} \label{lem_ciK}
Let $X$ be a smooth toric surface with $h^0(X,-K)=2$. Let $D$ be a torus-invariant nef divisor on $X$ such that $D+K$ is big and nef. Then for a general curve $C\in |D|$ the Clifford index is not computed by $-K|_C$.
\end{lemma}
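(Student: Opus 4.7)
The plan is to show that for a general smooth irreducible $C \in |D|$ one has $\ci(-K|_C) > \ci(C)$, which by definition means $\ci(C)$ is not computed by $-K|_C$. Setting $L = D + K$, the restriction sequence
\[ 0 \to \mathcal{O}_X(-L) \to \mathcal{O}_X(-K) \to \mathcal{O}_C(-K|_C) \to 0, \]
combined with Batyrev--Borisov vanishing applied to the big and nef divisor $L$ on the smooth toric surface $X$ (which gives $H^0(X,-L) = H^1(X,-L) = 0$), yields $h^0(C, -K|_C) = h^0(X, -K) = 2$. Toric intersection theory, using $-K = \sum_{v \in U(\Sigma)} D_v$ and the fact that $D_v \cdot D$ equals the lattice length of the edge of $\Delta := P_D$ with inner normal $v$ (zero if no such edge exists), gives $\deg(-K|_C) = |\partial \Delta \cap \ZZ^2|$.

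If $h^1(C, -K|_C) < 2$ then $-K|_C$ trivially does not compute $\ci(C)$, so assume $h^1 \geq 2$; in that case $\ci(-K|_C) = |\partial \Delta \cap \ZZ^2| - 2$. Lemma \ref{lemma_lellichiesacondition} immediately gives $|\partial \Delta \cap \ZZ^2| \geq \lw(\Delta) + 1$. I combine this with the purely combinatorial inequality $\lw(\Delta) \geq \lw(\Delta^{(1)}) + 2$: for any primitive direction $v$, any lattice point of $\Delta^{(1)}$ realizing an extremal level in direction $v$ lies in the topological interior of the lattice polygon $\Delta$, and therefore sits strictly between the two integer extremes of $\Delta$ in direction $v$, forcing the width of $\Delta$ in direction $v$ to exceed that of $\Delta^{(1)}$ by at least $2$; minimizing over $v$ yields the claim. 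That $\Delta^{(1)}$ is two-dimensional here follows from $P_L \subseteq \Delta^{(1)}$ together with the bigness of $L$. Together these bounds give $\ci(-K|_C) \geq \lw(\Delta^{(1)}) + 1$.

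Finally, I push $C$ forward along the morphism $X \to X_\Delta$ induced by $|D|$; for generic $C$ this identifies $C$ with a smooth hyperplane section $C_f \subseteq X_\Delta$. Theorem \ref{cliffindexthm} then yields $\ci(C) \leq \lw(\Delta^{(1)})$ in all cases (including the three exceptional polygons, where the Clifford index is even smaller). Combining with the previous bound produces $\ci(-K|_C) \geq \lw(\Delta^{(1)}) + 1 > \ci(C)$, as required.

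The main obstacle is the combinatorial inequality $\lw(\Delta) \geq \lw(\Delta^{(1)}) + 2$; once this is in hand, the proof becomes a careful assembly of toric cohomology vanishing, the intersection-number computation of $\deg(-K|_C)$, Lemma \ref{lemma_lellichiesacondition}, and Theorem \ref{cliffindexthm}.
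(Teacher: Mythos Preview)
Your proposal is correct and follows essentially the same route as the paper's proof: the same restriction sequence and Batyrev--Borisov vanishing give $h^0(C,-K|_C)=2$, the degree is identified with $|\partial\Delta\cap\ZZ^2|$, and the contradiction comes from combining Lemma~\ref{lemma_lellichiesacondition} with $\lw(\Delta^{(1)})\leq\lw(\Delta)-2$ and Theorem~\ref{cliffindexthm}. You are slightly more explicit than the paper in a few places---you spell out the $h^1\geq 2$ requirement, justify that $\Delta^{(1)}$ is two-dimensional, and actually prove the width inequality $\lw(\Delta)\geq\lw(\Delta^{(1)})+2$ that the paper simply asserts---but the architecture is identical.
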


\begin{proof}
Note that all divisors are Cartier because of the smoothness assumption. Denote the (lattice) polygon $P_D$ corresponding to $D$ by $\Delta$. 
The short exact sequence $0 \to \mathcal{O}_X(-D-K) \to \mathcal{O}_X(-K) \to \mathcal{O}_C(-K|_C)\to 0$
yields the long exact sequence 
$$0 \to H^0(X,-D-K)\to H^0(X,-K)\to H^0(C,-K|_C)\to H^1(X,-D-K) \to \ldots$$
Since $D+K$ is big and nef, the polygon $P_{D+K} = \Delta^{(1)}$ is two-dimensional and we have that $h^0(X,-D-K)=h^1(X,-D-K)=0$ by Batyrev-Borisov vanishing. 
It follows that $h^0(C,-K|_C)=h^0(X,-K)=2$. Hence, the divisor $-K|_C$ gives rise to a linear system on $C$ of rank $r=h^0(C,-K|_C)-1=1$ and degree 
$\sum_{v\in U(\Sigma)} \deg(D_v|_C)=|\partial \Delta\cap\mathbb{Z}^2|$. 
Now if the Clifford index of $C$ would be computed by $-K|_C$, then we would have $\ci(C)=|\partial \Delta\cap\mathbb{Z}^2|-2$. On the other hand, by Theorem~\ref{cliffindexthm} and Lemma~\ref{lemma_lellichiesacondition}, we have that $\ci(C)\leq\lw(\Delta^{(1)})\leq\lw(\Delta)-2<|\partial\Delta\cap\ZZ^2|-2$, a contradiction.
\end{proof}

We can now conclude with a proof of Theorem~\ref{thm_newcasestoricconjecture}, which we reformulate as the following corollary:

\begin{corollary}
If 
\[ |P_{-K_{\Delta^{(1)}}} \cap\mathbb{Z}^2|\geq 2, \] then Conjecture~\ref{conj_CCDL} correctly predicts the length of the linear strand
of the graded Betti table of $X_{\Delta^{(1)}}$.
\end{corollary}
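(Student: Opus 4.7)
The plan is to deduce Green's conjecture for smooth hyperplane sections $C_f \subseteq X_\Delta$, for any $\Delta$ whose interior polygon equals $\Delta^{(1)}$, by invoking Lelli-Chiesa's Theorem~\ref{lellichiesatheorem}, and then to apply the forward direction of Lemma~\ref{lemma2conj}. Concretely, I would take $\Delta = (\Delta^{(1)})^{\text{max}}$, pick $f$ such that $C_f \subseteq X_\Delta$ is a smooth irreducible hyperplane section, and pass to the minimal toric resolution of singularities $\rho : X \to X_\Delta$ from Section~\ref{sectionsixterms}, with canonical divisor $K$ and pullback curve $C' \cong C_f$ on the smooth rational surface $X$.

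Next I would verify the three hypotheses of Theorem~\ref{lellichiesatheorem}. The first, that $L = D_f + K$ is big and nef, is immediate from Lemma~\ref{fujitalemma}: $L$ is base-point free and $P_L = \Delta^{(1)}$ is two-dimensional. The third, which only matters when $h^0(X, -K) = 2$, asserts that the Clifford index of a generic $C \in |D_f|$ is not computed by $-K|_C$; this is exactly Lemma~\ref{lem_ciK} applied to $D = D_f$. Granted all three conditions, Lelli-Chiesa's theorem will give Green's conjecture for $C_f$, i.e.\ $a_{g-(\gamma-1)} = 0$ where $\gamma$ denotes the common right-hand side of Conjectures~\ref{conj_Green} and~\ref{conj_CCDL}. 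The forward implication of Lemma~\ref{lemma2conj}, which does not require the boundary-width inequality, then yields $b_{g-(\gamma-1)} = 0$ via the exact sequence~\eqref{sixterms}, completing the proof of the corollary.

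The hard part will be verifying the second hypothesis, $h^0(X, -K) \geq 2$. Since $\Delta = (\Delta^{(1)})^{\text{max}}$, every edge of $\Delta$ is parallel to some edge of $\Delta^{(1)}$, so $U(\Sigma_\Delta) \subseteq U(\Sigma_{\Delta^{(1)}})$ and consequently $P_{-K_\Delta} \supseteq P_{-K_{\Delta^{(1)}}}$. The hypothesis furnishes a non-zero lattice point $m_0 \in P_{-K_{\Delta^{(1)}}}$, and after a unimodular transformation I may assume $m_0 = (1, 0)$, so that every ray of $\Sigma_\Delta$ has first coordinate at least $-1$. The remaining step, which I expect to be the main obstacle, is to show that each Hirzebruch--Jung ray added in the minimal resolution inside a singular cone $\sigma = \mathrm{cone}(v, w)$ of $\Sigma_\Delta$ also satisfies this constraint. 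I would establish this via the geometric claim that the entire HJ chain lies in the closed triangle $\conv(0, v, w)$: if a primitive lattice point $u = \alpha v + \beta w$ with $\alpha + \beta > 1$ were to lie on the origin-facing boundary of $N = \conv(\sigma \cap (\ZZ^2 \setminus \{0\}))$, then the point $u/(\alpha + \beta)$, which belongs to the segment $[v, w] \subseteq N$, would sit strictly between the origin and $u$ on the same ray while still belonging to $N$, contradicting a supporting-hyperplane argument at $u$. Since $\conv(0, v, w) \subseteq \{x \geq -1\}$ whenever its three vertices do, the added HJ rays inherit the constraint, so $m_0 \in P_{-K}$ and $h^0(X, -K) \geq 2$ follows.
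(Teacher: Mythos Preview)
Your proof is correct and follows the same overall strategy as the paper: reduce to $\Delta = \Delta^{\text{max}}$, transfer the anticanonical condition $|P_{-K_{\Delta^{(1)}}} \cap \ZZ^2| \geq 2$ first to $X_\Delta$ (via $U(\Sigma_\Delta) \subseteq U(\Sigma_{\Delta^{(1)}})$) and then to the minimal resolution $X$, apply Lelli-Chiesa's theorem, and conclude with the forward direction of Lemma~\ref{lemma2conj}. The only substantive difference lies in how you verify that the Hirzebruch--Jung rays inserted in the resolution of a cone $\sigma = \mathrm{cone}(v,w)$ of $\Sigma_\Delta$ still have first coordinate at least $-1$. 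The paper handles this by a short case distinction on the first coordinates of $v$ and $w$ (both nonnegative; exactly one equal to $-1$; both equal to $-1$), describing the inserted rays explicitly in each case. You instead establish the general fact that every such ray lies in the closed triangle $\conv(0,v,w)$, using a supporting-line argument at a vertex of the origin-facing boundary of $\conv(\sigma \cap (\ZZ^2 \setminus \{0\}))$, and then note that this triangle is contained in $\{x \geq -1\}$ once its three vertices are. Your route is more conceptual and avoids the case split; the paper's is more explicit about what the minimal subdivision actually looks like.
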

\begin{proof}
  Because the statement only involves $\Delta^{(1)}$, we can assume that $\Delta$ is maximal.
  Using a unimodular transformation if needed, we can also assume that $(1,0)$ is contained in the polygon associated with $-K_{\Delta^{(1)}}$, which implies,
  as in the proof of Lemma~\ref{lemma_lellichiesacondition}, that all inner normal vectors to $\Delta^{(1)}$ having a strictly negative first coordinate must be of the form $(-1,b)$ for some $b \in \mathbb{Z}$.
  But then the same must be true for $\Delta = \Delta^\text{max}$, which is obtained from $\Delta^{(1)}$ by moving out the edges over an integral distance $1$. We claim that
  the minimal toric resolution of singularities $X \rightarrow X_\Delta$ is obtained by inserting rays whose primitive generators are of the form $(a,b)$ with $a \geq -1$.
  Indeed,
  \begin{itemize}
    \item minimally subdividing a cone spanned by $(a_1,b_1)$ and $(a_2,b_2)$ for integers $a_1,a_2,b_1,b_2$ with $a_1,a_2 \geq 0$ clearly introduces such rays only,
    \item minimally subdividing a cone spanned by $(-1,b_1)$ and $(a_2,b_2)$ for integers $a_2,b_1,b_2$  with $a_2 \geq 0$ introduces the ray spanned by $(0,-1)$ and rays whose primitive generators have a positive first coordinate,
    \item minimally subdividing a cone spanned by $(-1,b_1)$ and $(-1,b_2)$ for integers $b_1<b_2$ introduces the rays spanned by all integral vectors of the form $(-1, b)$ with $b_1 < b < b_2$.
  \end{itemize}
  In other words $(1,0) \in P_{-K}$, and therefore we can apply Lelli-Chiesa's theorem to conclude that Green's conjecture is true for
  any smooth hyperplane section $C_f \subseteq X_\Delta$.
  The conclusion now follows from Lemma \ref{lemma2conj}.
\end{proof}

 As a special case we find that Conjecture~\ref{conj_CCDL} is true if $X_{\Delta^{(1)}}$ is Gorenstein weak Fano.

\section*{Acknowledgements}
This research was partially supported by the research project G093913N of the Research Foundation Flanders (FWO), and by
the European Commission through the European Research
Council under the FP7/2007-2013 programme with ERC Grant Agreement 615722 MOTMELSUM.
The fourth author was supported by a PhD fellowship of the Research Foundation Flanders (FWO) during the preparation of this manuscript.
All authors thank Imre B\'ar\'any for answering a question of ours, which eventually led to the counterexample in Remark~\ref{countergreenremark}, and Margherita Lelli-Chiesa
for a helpful discussion. They thank the anonymous referees of the current submission and an anonymous referee of a previous submission for their careful reading and useful remarks, which were of significant help in improving the exposition.

\bibliographystyle{amsplain} 
\bibliography{can_syzygies}  

\providecommand{\bysame}{\leavevmode\hbox to3em{\hrulefill}\thinspace}
\providecommand{\MR}{\relax\ifhmode\unskip\space\fi MR }
\providecommand{\MRhref}[2]{%
  \href{http://www.ams.org/mathscinet-getitem?mr=#1}{#2}
}
\providecommand{\href}[2]{#2}
\begin{thebibliography}{10}

\bibitem{nagelaprodu}
Marian Aprodu and Jan Nagel, \emph{Koszul cohomology and algebraic geometry},
  University Lecture Series, vol.~52, American Mathematical Society,
  Providence, RI, 2010.

\bibitem{batyrev}
Victor Batyrev, \emph{Variations of the mixed {H}odge structure of affine
  hypersurfaces in algebraic tori}, Duke Mathematical Journal \textbf{69}
  (1993), no.~2, 349--409.

\bibitem{beauville}
Arnaud Beauville, \emph{La conjecture de {G}reen g\'{e}n\'{e}rique (d'apr\`es
  {C}. {V}oisin)}, Ast\'{e}risque \textbf{299} (2005), Exp. No. 924, vii,
  1--14, S\'{e}minaire Bourbaki. Vol. 2003/2004.

\bibitem{movingout}
Wouter Castryck, \emph{Moving out the edges of a lattice polygon}, Discrete and
  Computational Geometry \textbf{47} (2012), 496--518.

\bibitem{CaCo}
Wouter Castryck and Filip Cools, \emph{Newton polygons and curve gonalities},
  Journal of Algebraic Combinatorics \textbf{35} (2012), no.~3, 345--366.

\bibitem{schreyersinv}
\bysame, \emph{A combinatorial interpretation for {S}chreyer's tetragonal
  invariants}, Documenta Mathematica \textbf{20} (2015), 927--942.

\bibitem{canonical}
\bysame, \emph{A minimal set of generators for the canonical ideal of a
  non-degenerate curve}, Journal of the Australian Mathematical Society
  \textbf{98} (2015), no.~3, 311--323.

\bibitem{intrinsicness}
\bysame, \emph{Intrinsicness of the {N}ewton polygon for smooth curves on
  $\mathbb{P}^1\times\mathbb{P}^1$}, Revista Matem\'atica Complutense
  \textbf{30} (2017), no.~2, 233--258.

\bibitem{linearpencils}
\bysame, \emph{Linear pencils encoded in the {N}ewton polygon}, International
  Mathematics Research Notices \textbf{2017} (2017), no.~10, 2998--3049.

\bibitem{previouspaper}
Wouter Castryck, Filip Cools, Jeroen Demeyer, and Alexander Lemmens,
  \emph{Computing graded {B}etti tables of toric surfaces}, Transactions of the
  American Mathematical Society, to appear.

\bibitem{ciliberto_pareschi}
Ciro Ciliberto and Guiseppe Pareschi, \emph{Pencils of minimal degree on curves
  on a {K3} surface}, Journal f\"ur die reine und angewandte Mathematik
  \textbf{460} (1995), 15--36.

\bibitem{CoppensMartens}
Marc Coppens and Gerriet Martens, \emph{Secant spaces and {C}lifford's
  theorem}, Compositio Mathematica \textbf{78} (1991), no.~2, 193--212.

\bibitem{coxlittleschenck}
David Cox, John Little, and Hal Schenck, \emph{Toric varieties}, Graduate
  Studies in Mathematics, vol. 124, American Mathematical Society, Providence,
  RI, 2011.

\bibitem{ELMS}
David Eisenbud, Herbert Lange, Gerriet Martens, and Frank-Olaf Schreyer,
  \emph{The {C}lifford dimension of a projective curve}, Compositio Mathematica
  \textbf{72} (1989), no.~2, 173--204.

\bibitem{greenkoszul}
Mark Green, \emph{Koszul cohomology and the geometry of projective varieties},
  Journal of Differential Geometry \textbf{19} (1984), 125--171.

\bibitem{greenK3}
Mark Green and Robert Lazarsfeld, \emph{Special divisors on curves on a {K3}
  surface}, Inventiones Mathematicae \textbf{89} (1987), 357--370.

\bibitem{HNPS}
Christian Haase, Benjamin Nill, Andreas Paffenholz, and Francisco Santos,
  \emph{Lattice points in {M}inkowski sums}, Electronic Journal of
  Combinatorics \textbf{15} (2008), \#N11.

\bibitem{heringphd}
Milena Hering, \emph{Syzygies of toric varieties}, Ph.D. thesis, University of
  Michigan, 2006.

\bibitem{kawaguchi}
Ryo Kawaguchi, \emph{The gonality and {C}lifford index of curves on a toric
  surface}, Journal of Algebra \textbf{449} (2016), 660--686.

\bibitem{Khovanskii}
Askold~G. Khovanskii, \emph{Newton polyhedra and toroidal varieties},
  Functional Analysis and Its Applications \textbf{11} (1977), no.~4, 289--296.

\bibitem{knutsen_delpezzo}
Andreas~Leopold Knutsen, \emph{Exceptional curves on {D}el {P}ezzo surfaces},
  Mathematische Nachrichten \textbf{256} (2003), 58--81.

\bibitem{knutsen}
\bysame, \emph{On two conjectures for curves on {K3} surfaces}, International
  Journal of Mathematics \textbf{20} (2009), no.~12, 1547--1560.

\bibitem{laterveer}
Robert Laterveer, \emph{Linear systems on toric varieties}, T\^ohoku
  Mathematical Journal \textbf{48} (1996), 451--458.

\bibitem{lellichiesa}
Margherita Lelli-Chiesa, \emph{Green's conjecture for curves on rational
  surfaces with an anticanonical pencil}, Mathematische Zeitschrift
  \textbf{275} (2013), no.~3-4, 899--910.

\bibitem{mustata}
Mircea Musta\c{t}\u{a}, \emph{Vanishing theorems on toric varieties}, T\^ohoku
  Mathematical Journal \textbf{54} (2002), no.~3, 451--470.

\bibitem{nill}
Benjamin Nill, \emph{Gorenstein toric {F}ano varieties}, Manuscripta
  Mathematica \textbf{116} (2005), 183--210.

\bibitem{pareschi}
Guiseppe Pareschi, \emph{Exceptional linear systems on curves on {D}el {P}ezzo
  surfaces}, Mathematische Annalen \textbf{291} (1991), no.~1, 17--38.

\bibitem{schenck}
Hal Schenck, \emph{Lattice polygons and {G}reen's theorem}, Proceedings of the
  American Mathematical Society \textbf{132} (2004), no.~12, 3509--3512.

\bibitem{schreyer}
Frank-Olaf Schreyer, \emph{Syzygies of canonical curves and special linear
  series}, Mathematische Annalen \textbf{275} (1986), no.~1, 105--137.

\end{thebibliography}
\small 

\vspace{0.3cm}

\noindent \textsc{Department of Mathematics, KU Leuven, Celestijnenlaan 200B, 3001 Leuven (Heverlee), Belgium}

\noindent and

\noindent \textsc{Department of Electrical Engineering and imec-COSIC, KU Leuven, Kasteelpark Arenberg 10/2452, 3000 Leuven (Heverlee), Belgium}

\noindent and

\noindent \textsc{Department of Mathematics: Algebra and Geometry, Ghent University, Krijgs\-laan 281/S23, 9000 Gent, Belgium}

\noindent \texttt{wouter.castryck@esat.kuleuven.be}\\

\noindent \textsc{Department of Mathematics, KU Leuven, Celestijnenlaan 200B, 3001 Leuven (Heverlee), Belgium}

\noindent \texttt{f.cools@kuleuven.be}, \texttt{alexander.lemmens@kuleuven.be}\\

\noindent \textsc{Department of Mathematics: Algebra and Geometry, Ghent University, Krijgs\-laan 281/S23, 9000 Gent, Belgium}

\noindent \texttt{j.demeyer@ugent.be}
\end{document}